\let\mathbh\mathds
\newtheorem{thm}{Theorem}
\newtheorem{lem}[thm]{Lemma}
\newtheorem{prop}[thm]{Proposition}
\theoremstyle{definition}
\newtheorem{defin}[thm]{Definition}
\begin{document}

\begin{frontmatter}
\title{The nonparametric LAN expansion for discretely observed diffusions}
\runtitle{LAN expansion for discretely observed diffusions}


 \begin{aug}
\author{\fnms{Sven} \snm{Wang}\ead[label=e1]{sven.wang@statslab.cam.ac.uk}}
\address{Centre for Mathematical Sciences, Wilberforce Road \\
Cambridge CB3 0WB, United Kingdom\\
\printead{e1}}
 \end{aug}

 \runauthor{S. Wang}

\begin{abstract}
Consider a scalar reflected diffusion $(X_{t}:t\geq 0)$, where the
unknown drift function $b$ is modelled nonparametrically. We show that
in the low frequency sampling case, when the sample consists of
$(X_{0},X_{\Delta },...,X_{n\Delta })$ for some fixed sampling distance
$\Delta >0$, the model satisfies the local asymptotic normality (LAN)
property, assuming that $b$ satisfies some mild regularity assumptions.
This is established by using the connections of diffusion processes to
elliptic and parabolic PDEs. The key tools used are regularity estimates
for certain parabolic PDEs as well as a detailed analysis of the
spectral properties of the elliptic differential operator related to
$(X_{t}:t\geq 0)$\vadjust{\vspace{-2pt}}.
\end{abstract}

 \begin{keyword}[class=MSC]
 \kwd{62M99}
 \end{keyword}\vspace{-2pt}
\begin{keyword}
\kwd{Nonparametric diffusion model}
\kwd{LAN property}
\kwd{parabolic PDE}
\end{keyword}\vspace{-2pt}

\received{\smonth{3} \syear{2018}}

\end{frontmatter}
\maketitle

\vspace{-4pt}
\section{Introduction}
\vspace{-2pt}

Consider a scalar diffusion, described by a stochastic differential
equation (SDE)
\begin{equation*}
dX_{t}=b(X_{t})dt+\sqrt{2}dW_{t},
\qquad
t\geq 0,
\end{equation*}
where $(W_{t}:t\geq 0)$ is a standard Brownian motion and $b$ is the
unknown \textit{drift function} that is to be estimated. We investigate
the so-called \textit{low frequency} observation scheme, where the data
consists of states
\begin{equation}
\label{eqsample}
X^{(n)}=(X_{0},X_{\Delta }...,X_{n\Delta })
\end{equation}
of one sample path of $(X_{t}:t\ge 0)$, where $\Delta >0$ is the
\textit{fixed} time difference between measurements. To ensure
ergodicity and to limit technical difficulties, we follow
\cite{ghr} and \cite{ns} and consider a version of the model where
the diffusion takes values on $[0,1]$ with reflection at the boundary
points $\{0,1\}$, see Section \ref{sec-model} for the precise
definition.

The nonparametric estimation of the coefficients of a diffusion process
has attracted a great deal of attention in the past. For the
low-frequency sampling scheme (\ref{eqsample}), Gobet, Hoffmann and
Reiss \cite{ghr} determined the minimax rate of estimation for
both the drift and diffusion coefficient and also devised a spectral
estimation method which achieves this rate. Thereafter, Nickl and
S\"{o}hl \cite{ns} proved that the Bayesian posterior distribution
contracts at the minimax rate, giving a frequentist justification for
the use of Bayesian methods. In other sampling schemes, various methods
have been studied, see e.g. \cite{hoffmann1} for a frequentist
approach, \cite{ns15,ns16,ns33,ABR18,NR18} for recent posterior
consistency and contraction rate results for Bayesian methods as well
as \cite{ns19,ns29} for MCMC methodology for the computation of
the Bayesian posterior.

However, often one desires a more detailed understanding of the
performance of both frequentist and Bayesian methods, e.g. by
establishing semi-parametric efficiency bounds or by proving a
nonparametric Bernstein-von Mises theorem (BvM), which would give a
frequentist justification for the use of Bayesian credible sets as
confidence sets (see \cite{nicklgine}, Chapter 7.3). Nonparametric
BvMs have been explored in the papers \cite{cn1,cn2} and have
recently been proven for a number of statistical inverse problems
\cite{n17,ns17,monard}, by Nickl and co-authors. In a diffusion model
with continuous observations $(X_{t}:t\le T)$, Nickl and Ray
\cite{NR18} recently proved a nonparametric BvM for estimating the drift
$b$.

To order to achieve such a detailed understanding, a key step lies in
studying the local information geometry of the parameter space, which
in terms of semiparametric efficiency theory (see e.g.
\cite{vdv}, Chapter 25) involves finding the LAN expansion and the
corresponding (Fisher) information operator. This in turn determines the
Cram\'{e}r-Rao lower bound for estimating a certain class of functionals
of the parameter of interest. While in the Gaussian white noise model
with direct observations, the LAN expansion of the log-likelihood ratio
is exact and given by the Cameron-Martin theorem, in inverse problems
proving the LAN property is often not straightforward.

In a finite-dimensional (parametric) model for multidimensional
diffusions which are sampled at high frequency, where the sample
consists of states
\begin{equation*}
X^{(n)}=(X_{0},X_{\Delta_{n}}...,X_{n\Delta_{n}})
\end{equation*}
with asymptotics such that $\Delta_{n}\to 0$ and $n\Delta_{n}\to
\infty $, the LAN property was shown by Gobet \cite{gobetlan} by
use of Malliavin calculus.

The main contribution of this paper is to prove that also with
\emph{low} frequency observations, the reflected diffusion model
satisfies the LAN property, under mild regularity assumptions on the
drift $b$. If the transition densities of the Markov chain $(X_{i
\Delta }:i\in \mathbb{N})$ are denoted by $p_{\Delta ,b}$, then the
log-likelihood of the sample (\ref{eqsample}) is approximately equal to
\begin{equation*}
\ell_{b} (X^{(n)})\approx \sum_{i=1}^{n} \log p_{\Delta ,b}(X_{(i-1)
\Delta },X_{i\Delta }),
\end{equation*}
from which one can see the necessity of two ingredients to show the LAN
expansion:
\begin{itemize}
\item
The first is a result on the differentiability of the transition
densities $b\mapsto p_{\Delta ,b}(x,y)$, which guarantees that we can
form the second-order Taylor expansion of the log-likelihood in certain
`directions' $h/\sqrt{n}$ with sufficiently good control over the
remainder. See Theorem \ref{thmderivative} for the precise statement,
where we importantly also obtain an explicit form for the first
derivative $A_{b}$, the `score operator'.
\item
The second main ingredient consists of two well known limit theorems,
the central limit theorem for martingale difference sequences
\cite{brown} and the ergodic theorem, which ensure the right limits for
the first and second order terms in the Taylor expansion respectively.
\end{itemize}
In view of this, the main work done in this paper lies in establishing
the regularity needed for $p_{\Delta ,b}(x,y)$, see Theorem
\ref{thmderivative} below. As there is no explicit formula for
$p_{\Delta ,b}(x,y)$ in terms of $b$, our approach relies on techniques
from the theory of parabolic PDE and spectral theory. We use a PDE
perturbation argument, based on the fact that the transition densities
of a diffusion process can naturally be viewed as the fundamental
solution to a related parabolic PDE.

The main difficulty in the proofs lies in the singular behaviour of
$p_{t,b}(x,y)$ as $(t,x)$ approaches $(0,y)$, which is why standard PDE
results cannot be applied directly, but only in a regularised setting.
Thus the arguments will first be carried out for any fixed
regularisation parameter $\delta >0$, where the analysis needs to be
done carefully in order to ensure that the estimates obtained are
uniform in $\delta >0$ and hence still valid in the limit $\delta
\to 0$.

In the context of a statistical inverse problem for the (elliptic)
Schr\"{o}dinger equation \cite{n17,lu17}, where the above singular
behaviour is not present, PDE perturbation arguments have previously
been used to linearize the $\log $-likelihood.

We also remark that the use of more probabilistic proof techniques like
in \cite{gobetlan} would have been conceivable, too. However, we
found the PDE approach employed here to be more naturally suited to
dealing with boundary conditions, and it avoids dealing with pathwise
properties of the diffusions by working with the transitions densities
directly, which are ultimately the objects of interest for analyzing the
likelihood.

Potential applications of the LAN expansion presented in Theorem
\ref{thmlanexpansion} include the study of semiparametric efficiency for
a certain class of functionals of $b$ which is implicitly defined by the
range of the `information operator' $A^{*}_{b}A_{b}$ (where
$A_{b}$ is the score operator (\ref{eqscoreoperator})), as well as an
infinite-dimensional Bernstein-von-Mises theorem similar to
\cite{monard,n17,NR18,ns17}. However, studying the properties of
$A^{*}_{b}A_{b}$ needed for this poses a highly non-trivial challenge
which still has to be overcome, see Section \ref{sec-appl} for a more
detailed discussion.

In Section \ref{sec-main-res}, we state and prove the LAN expansion.
Section \ref{seclocalapprox} is devoted to proving Theorem
\ref{thmderivative}. Finally, in Section \ref{secspectral}, we derive
the spectral properties of the differential operator $\mathcal{L}_{b}$
and the transition semigroup $(P_{t,b}:t\geq 0)$ needed throughout the
proofs.

\section{Main results}\label{sec-main-res}

\subsection{A reflected diffusion model}\label{sec-model}
We shall work with boundary reflected diffusions on the interval
$[0,1]$, following \cite{ghr,ns}. Consider the stochastic process
$(X_{t}:t\geq 0)$, whose evolution is described by the stochastic
differential equation (SDE)
\begin{equation}
\label{eqSDE}
dX_{t}=b(X_{t})dt+\sqrt{2}dW_{t}\;+\;dK_{t}(X),
\qquad
X_{t}\in [0,1],
~~
t\geq 0.
\end{equation}
Here $(W_{t}:t\geq 0)$ is a standard Brownian motion, $(K_{t}(X):t
\geq 0)$ is a non-anticipative finite variation process that only
changes when $X_{t}\in \{0,1\}$ and
\begin{equation*}
b:[0,1]\to \mathbb{R}
\end{equation*}
is the unknown drift function. We note that $K(X)$, which accounts for
the reflecting boundary behaviour, is part of a solution to
(\ref{eqSDE}) and is in fact given by the difference of the local times
of $X$ at $0$ and $1$.

For any integer $s\ge 0$, let $C^{s}=C^{s}((0,1))$ and
$H^{s}=H^{s}((0,1))$ denote the spaces of $s$-times continuously
differentiable functions and $s$-times weakly differentiable functions
with $L^{2}$-derivatives, respectively, endowed with the usual norms.
We also define the subspace
\begin{equation*}
C^{1}_{0}:=\{f\in C^{1}:f(0)=f(1)=0\}.
\end{equation*}
We assume throughout that for some $B<\infty $, $b$ lies in the
$C^{1}_{0}$-ball
\begin{equation}
\label{theta-def}
\Theta :=\big \{f\in C^{1}_{0}:\|f\|_{C^{1}}:=\|f\|_{\infty }+\|f'\|
_{\infty }\le B\big \}.
\end{equation}
This ensures the existence of a pathwise solution $(X_{t}:t\geq 0)$ to
(\ref{eqSDE}) which can be constructed by a reflection argument, see
e.g. Section I.{\S }23 in \cite{GS72} or \cite{ns}. For some
$\Delta >0$, which we assume to be \textit{fixed} throughout the paper,
our sample consists of measurements $X^{(n)}=(X_{0},X_{\Delta }...,X
_{n\Delta })$ of one sample path, with asymptotics $n\to \infty $.

The process $(X_{t}:t\geq 0)$ forms an ergodic Markov process with
invariant distribution $\mu_{b}=\mu $, whose Lebesgue density (which we
also denote by $\mu_{b}$) is identified by $b$ via
\begin{equation}
\label{eqinvariantmeasure}
\mu_{b}(x)=
\frac{e^{\int_{0}^{x}b(y)dy}}{\int_{0}^{1}e^{\int_{0}^{u}b(y)dy}du},
\quad x\in [0,1],
\end{equation}
see e.g. Chapter 4 in \cite{bass}. Moreover, we denote the
Lebesgue transition densities and the semigroup associated to
$(X_{t}:t\ge 0)$ by $p_{t,b}$ and $P_{t,b}$ respectively:
\begin{align}
&p_{t,b}:[0,1]^{2}\to \mathbb{R},
~~
p_{t,b}(x,y)=\mathbb{P}_{x}(X_{t}\in dy),
~~
t>0,
\label{eqtransitiondensity}\\
&P_{t,b}f (x)=\mathbb{E}_{x}[f(X_{t})]=\int_{0}^{1}p_{t,b}(x,z)f(z)dz,
~~
t>0,
~~
f\in L^{2}.
\label{eqsemigroup}
\end{align}
Here, by Proposition 9 in \cite{ns}, the transition densities are
well--defined as well as bounded above and below for each $t>0$, so that
(\ref{eqsemigroup}) is well-defined, too.

Let $\mathbb{P}_{b}$ denote the law of $(X_{i\Delta }:i\geq 0)$ on
$[0,1]^{\mathbb{N}}$. For ease of exposition, we assume throughout that
$X_{0}\sim \mu_{b}$ under $\mathbb{P}_{b}$, a common assumption (cf.
\cite{ghr,ns}) which we make due to the uniform spectral gap over
$b\in \Theta $ guaranteed by Lemma \ref{lem bounds on eigenfunctions}
below, which yields exponentially fast convergence of $X_{t}$ to
$\mu_{b}$. Then under any $\mathbb{P}_{b}$, $b\in \Theta $, the law of
$X^{(n)}$ from (\ref{eqsample}) on $[0,1]^{n+1}$ is absolutely
continuous with respect to the $n+1$-- dimensional Lebesgue measure, and
the log--density, which also constitutes the \textit{log--likelihood}
(when viewed as a function of $b$), is given by
\begin{equation}
\label{eqloglikelihood}
\log d\mathbb{P}_{b}(X^{(n)})=\log \mu_{b}(X_{0})+\sum_{i=1}^{n}
\log p_{\Delta ,b}(X_{(i-1)\Delta }, X_{i\Delta }).
\end{equation}

We note that some of the above assumptions can be relaxed at the expense
of further technicalities in the proofs: Firstly, the assumption
$X_{0}\sim \mu_{b}$ could be replaced by $X_{0}\sim \pi_{b}$ (under
$\mathbb{P}_{b}$), for any measures $\pi_{b}$ with Lebesgue densities
such that for all $b\in \Theta $, $\log d\pi_{\tilde{b}}(X_{0})-
\log d\pi_{b}(X_{0})=o_{\mathbb{P}_{b}}(1)$ as $\|\tilde{b}-b\|_{H
^{1}}\to 0$. Secondly, it is conceivable that the main Theorems
\ref{thmderivative} and \ref{thmlanexpansion} below can be generalized
to all $b\in H^{1}$ and $h\in \{f\in H^{1}:f(0)=f(1)=0 \}$, which we
shall not pursue further here, however.

\subsection{Differentiability of the transition densities}

In order to prove the LAN property, we need to differentiate the
log-likelihood (\ref{eqloglikelihood}) at any drift parameter
$b\in \Theta $, and the following theorem shows that for any
$x,y\in [0,1]$, maps of the form $b\mapsto p_{\Delta ,b}(x,y)$ are
infinitely differentiable in `directions' $h\in C^{1}_{0}$ (and in fact,
Fr\'{e}chet differentiable). For $b,h\in C^{1}_{0}$, $\eta \in
\mathbb{R}$ and $x,y\in [0,1]$, for convenience we introduce the
notation
\begin{equation*}
\Phi_{b,h,x,y}= \Phi : \mathbb{R}\to \mathbb{R},
\qquad
\Phi (\eta )=p_{\Delta ,b+\eta h}(x,y).
\end{equation*}
\begin{thm}
\label{thmderivative}
For all $b,h\in C^{1}_{0}$ and $x,y\in [0,1]$, $\Phi =\Phi_{b,h,x,y}$
is a smooth (in fact, real analytic) function on $\mathbb{R}$, and we
have
\begin{equation}
\label{eqderivative}
\Phi '(0)=\int_{0}^{\Delta }P_{\Delta -s,b}[h\partial_{1}p_{s,b}(
\cdot ,y)](x)ds.
\end{equation}
Moreover, for each integer $k\geq 1$, we have the following bound on the
$k$-th derivative of $\Phi $ at $0$:
\begin{equation*}
\sup_{b\in \Theta }\;\sup_{h\in C^{1}_{0}, \|h\|_{H^{1}}\leq 1}\;
\sup_{x,y\in [0,1]}\left| \Phi^{(k)}(0)\right| <\infty .
\end{equation*}
\end{thm}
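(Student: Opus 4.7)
The approach is a Duhamel / Dyson-series perturbation of the parabolic PDE satisfied by the transition density. Since $\mathcal{L}_{b + \eta h} = \mathcal{L}_b + \eta h \partial_x$, the perturbation $V := h \partial_x$ is a first-order operator. Applying variation of constants to the backward Kolmogorov equation $\partial_t p_{t, b + \eta h}(\cdot, y) = \mathcal{L}_{b + \eta h} p_{t, b + \eta h}(\cdot, y)$ with initial data $\delta_y$ yields the Duhamel identity
\begin{equation*}
p_{t, b + \eta h}(x, y) = p_{t, b}(x, y) + \eta \int_0^t P_{t - s, b}\bigl[h\, \partial_1 p_{s, b + \eta h}(\cdot, y)\bigr](x)\, ds.
\end{equation*}
Iterating produces the formal Dyson series $p_{\Delta, b + \eta h}(x, y) = \sum_{k \geq 0} \eta^k K_k(\Delta, x, y)$, with $K_0(t, x, y) = p_{t, b}(x, y)$ and
\begin{equation*}
K_k(t, x, y) = \int_0^t P_{t - s, b}\bigl[h\, \partial_1 K_{k - 1}(s, \cdot, y)\bigr](x)\, ds.
\end{equation*}
Once this expansion is made rigorous, the theorem follows: the $k = 1$ term is precisely \eqref{eqderivative}, and $\Phi^{(k)}(0) = k!\, K_k(\Delta, x, y)$ inherits uniform bounds from those on the $K_k$.

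\textbf{Step 1: Parabolic smoothing uniform in $b$.} From the spectral decomposition of $\mathcal{L}_b$ on $L^2(\mu_b)$ and the heat-kernel estimates of Section \ref{secspectral}, one extracts bounds of the form
\begin{equation*}
\|P_{t, b}\|_{L^2 \to L^\infty} \lesssim t^{-1/4}, \qquad \|\partial_x P_{t, b}\|_{L^2 \to L^2} \lesssim t^{-1/2}, \qquad \|\partial_1 p_{t, b}(\cdot, y)\|_{L^2} \lesssim t^{-3/4},
\end{equation*}
valid uniformly for $b \in \Theta$, $y \in [0, 1]$ and $t \in (0, \Delta]$; the third captures the essential short-time singularity, with exponent strictly below $1$. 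Combining these with the one-dimensional Sobolev embedding $\|h\|_\infty \le C \|h\|_{H^1} \le C$ and writing $G_k(t) := \|\partial_1 K_k(t, \cdot, y)\|_{L^2}$, one obtains the recursion
\begin{equation*}
G_k(t) \le C \int_0^t (t - s)^{-1/2}\, G_{k - 1}(s)\, ds, \qquad G_0(t) \lesssim t^{-3/4}.
\end{equation*}
Iterating and evaluating the resulting Beta integrals $B(a, b) = \Gamma(a)\Gamma(b)/\Gamma(a + b)$ via Stirling gives $G_k(t) \le C^k / \sqrt{k!}$ up to a polynomial factor of $t$, and similarly $|K_k(\Delta, x, y)| \le C^k / \sqrt{k!}$ uniformly in $x, y, b, h$. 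This produces both convergence of the Dyson series for every $\eta \in \mathbb{R}$ (so $\Phi$ is in fact entire) and the claimed uniform bound on $|\Phi^{(k)}(0)|$ for each fixed $k$.

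\textbf{Step 2: Rigorous identification and regularisation.} A Picard-type uniqueness argument applied to the Duhamel integral equation identifies the Dyson sum with $\Phi(\eta)$. Because $\partial_1 p_{s, b}(\cdot, y)$ is genuinely singular as $s \to 0$, the manipulations of Step 1 cannot be performed pointwise; following the strategy outlined in the Introduction, I would carry them out first on $[\delta, \Delta]$ with the smooth initial datum $p_{\delta, b + \eta h}(\cdot, y) \in C^1$, by way of the Chapman--Kolmogorov identity $p_{\Delta, b + \eta h}(x, y) = P_{\Delta - \delta, b + \eta h}[p_{\delta, b + \eta h}(\cdot, y)](x)$, and then pass to the limit $\delta \to 0$ by dominated convergence, using the uniform-in-$\delta$ versions of the Beta-integral bounds of Step 1.

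\textbf{Main obstacle.} The chief technical difficulty is establishing the parabolic smoothing estimates of Step 1 \emph{uniformly} in $b \in \Theta$, since the conclusion of the theorem takes a supremum over that set. This uniformity ultimately rests on the uniform spectral gap and uniform eigenfunction bounds for $\mathcal{L}_b$ provided by Section \ref{secspectral}. With those inputs in hand, everything else reduces to Beta-function bookkeeping and a careful regularisation argument.
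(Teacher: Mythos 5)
Your Duhamel/Dyson-series plan is the same perturbative idea that underlies the paper's construction of the recursive approximants $v_k^\delta$, $R_k^\delta$, but the two proposals diverge sharply at the quantitative level, and there is one step in yours that is asserted without a workable justification in the framework of this paper.

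The load-bearing claim in your Step 1 is $\|\partial_1 p_{t,b}(\cdot,y)\|_{L^2}\lesssim t^{-3/4}$ uniformly in $b\in\Theta$, and, as you correctly note, the whole recursion closes only because this exponent is strictly below $1$. But the eigenfunction bounds actually proved in Section \ref{secspectral} of the paper give $\|u_j\|_\infty\lesssim|\lambda_j|^{1/4+\epsilon}$ (Lemma \ref{lem bounds on eigenfunctions}(3)), not $\|u_j\|_\infty\lesssim 1$, and plugging this into the spectral representation of $\partial_1 p_{t,b}(\cdot,y)$ yields $\|\partial_1 p_{t,b}(\cdot,y)\|_{L^2}\lesssim t^{-1-\epsilon}$, whose first Beta integral $\int_0^t(t-s)^{-1/2}s^{-1-\epsilon}ds$ diverges. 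The exponent $3/4$ is in fact correct, but to extract it from the ingredients available here you must avoid putting absolute values inside the spectral sum pointwise in $j$; instead one should observe $\|p_{t,b}(\cdot,y)\|_{H^1}^2\simeq\sum_j(1+|\lambda_j|)e^{2\lambda_j t}|u_j(y)|^2\mu(y)^2\lesssim t^{-1}\sum_j e^{\lambda_j t}|u_j(y)|^2\mu(y)=t^{-1}p_{t,b}(y,y)\lesssim t^{-3/2}$, using the on-diagonal heat kernel bound (\ref{hkest}) rather than any eigenfunction sup-norm bound. This trick is essential and is not mentioned in your plan; without it the recursion never starts. Secondly, your regularisation via Chapman--Kolmogorov with initial datum $p_{\delta,b+\eta h}(\cdot,y)$ reintroduces the $\eta$-dependence into the data at time $\delta$, which means the Dyson expansion on $[\delta,\Delta]$ is not simply centred at $b$; the paper sidesteps this by mollifying in the $y$-variable with the drift-independent kernel $P_{\delta,0}$, giving the fixed smooth datum $\varphi_\delta=p_{\delta,0}(y,\cdot)$ and a clean PDE (Lemma \ref{lemudelta}). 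Beyond these concerns, it is worth noting that your route, if patched as above, is genuinely more elementary and also stronger than the paper's: you would obtain $|\Phi^{(k)}(0)|\lesssim C^k\sqrt{k!}$ and hence that $\Phi$ is entire, whereas the paper's interpolation of the $L^2$-bound (via the time-integrated remainders $Q_k^\delta$ and Proposition \ref{proplunardi}) against the $H^1$-bound only yields $|a_k|\lesssim C^k$ and hence a finite radius of convergence, which is all that local analyticity requires.
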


Section \ref{seclocalapprox} is devoted to the proof of Theorem
\ref{thmderivative}.

Heuristically speaking, the right hand side of (\ref{eqderivative}) has
the form of a solution to an inhomogeneous parabolic PDE (cf.
Proposition \ref{proplunardi}), and this PDE perspective will be key in
the proofs. However, one has to be careful with such an interpretation,
as the singular `source term' $h\partial_{1}p_{b,t}(\cdot ,y)$ does not
fall within the scope of classical PDE theory. Therefore, the above
intuition needs to be made rigorous via a regularisation argument, see
Section \ref{seclocalapprox}.

\subsection{LAN expansion}\label{subseclan}
By Lemma \ref{lem bounds on transition densities}, for each
$b\in \Theta $, $p_{\Delta ,b}(\cdot ,\cdot )$ is bounded above and
below. Hence by Theorem \ref{thmderivative} and the chain rule, the
score operator is given by
\begin{equation}
\label{eqscoreoperator}
\begin{split}
A_{b}:
~~
C^{1}_{0}([0,1])\;\to \; L^{2}([0,1]\times [0,1]),
~~~~
A_{b}h(x,y)=\frac{[ \Phi_{b,h,x,y}]'(0)}{p_{\Delta , b}(x,y)}.
\end{split}
\end{equation}
For any $f,g\in L^{2}([0,1]\times [0,1])$, we also define the
corresponding `LAN inner product' and `LAN norm' as follows:
\begin{equation}
\label{lan-prod}
\begin{split}
\langle f,g\rangle_{L^{2}(p_{\Delta ,b}\mu_{b})}
&:=\int_{0}^{1} \int
_{0}^{1} f(x,y)g(x,y) \mu_{b}(x)p_{\Delta ,b}(x,y) dx dy,
\\
\langle f,g\rangle_{LAN}:=\langle
&A_{b}f,A_{b}g
\rangle_{L^{2}(p_{\Delta ,b}\mu_{b})},
~~~~~~~
\|f\|_{LAN}^{2} := \langle f,f\rangle_{LAN}.
\end{split}
\end{equation}
Here is our main result, the proof can be found in Section
\ref{sec-lan-pf}.
\begin{thm}[LAN expansion]
\label{thmlanexpansion}
For any $b,h\in C^{1}_{0}$, we have that
\begin{equation}
\label{eqlan1}
\log \frac{d\mathbb{P}_{b+h/\sqrt{n}}}{d\mathbb{P}_{b}}(X^{(n)}) = \frac{1}{
\sqrt{n}} \sum_{i=1}^{n} A_{b}h(X_{(i-1)\Delta },X_{i\Delta }) -
\frac{1}{2} \|h\|^{2}_{LAN} + o_{\mathbb{P}_{b}}(1)
\end{equation}
as $n\to \infty $ and
\begin{equation}
\label{eqlan2}
\frac{1}{\sqrt{n}} \sum_{i=1}^{n} A_{b}h(X_{(i-1)\Delta },X_{i
\Delta }) \xrightarrow{n\to \infty }^{d} N(0, \|h\|_{LAN}^{2}).
\end{equation}
\end{thm}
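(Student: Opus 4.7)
The plan is to Taylor-expand each log-transition density in the log-likelihood (\ref{eqloglikelihood}) around $b$ to second order in $n^{-1/2}$, sum over $i=1,\dots,n$, and identify the limits of the resulting linear and quadratic sums via the martingale central limit theorem of \cite{brown} and the ergodic theorem for the sampled chain $(X_{i\Delta})_{i\ge 0}$, respectively. Set $L_{x,y}(\eta):=\log\Phi_{b,h,x,y}(\eta)=\log p_{\Delta,b+\eta h}(x,y)$. Theorem~\ref{thmderivative}, combined with the uniform positive lower bound on $p_{\Delta,b}$ from Lemma~\ref{lem bounds on transition densities}, shows that $L_{x,y}$ is smooth with $L'_{x,y}(0)=A_b h(x,y)$, $L''_{x,y}(0)=\Phi''_{b,h,x,y}(0)/p_{\Delta,b}(x,y)-(A_b h(x,y))^2$, and with $L'''_{x,y}(\xi)$ bounded uniformly in $x,y\in[0,1]$ and in $\xi$ in a small neighbourhood of $0$ (by reapplying Theorem~\ref{thmderivative} at the shifted base point $b+\xi h$, which lies in a slightly enlarged $C^1_0$-ball for $n$ large). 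Summing the Taylor expansion over $i$ and absorbing the initial contribution $\log\mu_{b+h/\sqrt{n}}(X_0)-\log\mu_b(X_0)=O(n^{-1/2})=o_{\mathbb{P}_b}(1)$ (immediate from the explicit formula (\ref{eqinvariantmeasure})) reduces (\ref{eqlan1}) to proving (\ref{eqlan2}) together with
\begin{equation*}
\tfrac{1}{2n}\sum_{i=1}^n L''_{X_{(i-1)\Delta},X_{i\Delta}}(0) \;\xrightarrow{\mathbb{P}_b}\; -\tfrac12\|h\|_{LAN}^2.
\end{equation*}

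Differentiating the identity $\int_0^1 p_{\Delta,b+\eta h}(x,y)\,dy\equiv 1$ once at $\eta=0$ yields $\int_0^1\Phi'_{b,h,x,y}(0)\,dy=0$, so by the Markov property $M_i:=A_b h(X_{(i-1)\Delta},X_{i\Delta})$ is a bounded stationary martingale difference sequence with respect to $\mathcal{F}_i:=\sigma(X_0,\dots,X_{i\Delta})$ under $\mathbb{P}_b$. The chain $(X_{i\Delta})$ is ergodic by the uniform spectral gap established in Section~\ref{secspectral}, so the ergodic theorem gives
\begin{equation*}
\tfrac{1}{n}\sum_{i=1}^n \mathbb{E}_b\!\left[M_i^2\,\big|\,\mathcal{F}_{i-1}\right] \;\xrightarrow{\text{a.s.}}\; \mathbb{E}_{\mu_b\otimes p_{\Delta,b}}\!\left[(A_b h)^2\right]=\|h\|_{LAN}^2,
\end{equation*}
while the Lindeberg condition is trivial since $|M_i|$ is uniformly bounded; Brown's martingale CLT then delivers (\ref{eqlan2}). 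Differentiating $\int p_{\Delta,b+\eta h}(x,y)\,dy\equiv 1$ a second time gives $\int_0^1\Phi''_{b,h,x,y}(0)\,dy=0$, and hence
\begin{equation*}
\mathbb{E}_{\mu_b\otimes p_{\Delta,b}}\!\left[L''(0)\right] = \int_0^1 \mu_b(x)\int_0^1 \Phi''_{b,h,x,y}(0)\,dy\,dx - \|h\|_{LAN}^2 = -\|h\|_{LAN}^2,
\end{equation*}
so a second application of the ergodic theorem to the bounded function $(x,y)\mapsto L''_{x,y}(0)$ produces the required limit for the quadratic Taylor term, completing the proof of (\ref{eqlan1}).

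The main obstacle I anticipate is the uniform control of the third-order Lagrange remainder, since Theorem~\ref{thmderivative} a priori bounds $\Phi^{(k)}(0)$ only for $b\in\Theta$, whereas the remainder involves $\Phi'''(\xi)$ at intermediate $\xi\in[0,n^{-1/2}]$. This is handled by observing that the same estimates of Theorem~\ref{thmderivative} apply at the shifted base point $b+\xi h\in\Theta'$ for a slightly enlarged ball $\Theta'\supset\Theta$ (which contains all perturbed drifts for $n$ sufficiently large since $\|\xi h\|_{C^1}\to 0$), and so the resulting bound on $|L'''|$ is uniform. Beyond this technicality, the argument is the standard LAN template, resting on Theorem~\ref{thmderivative}, the normalisation identities $\int\Phi'(0)\,dy=\int\Phi''(0)\,dy=0$, and the spectral-gap-based ergodicity of the sampled chain.
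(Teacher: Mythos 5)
Your proposal follows the same overall strategy as the paper's proof: the same decomposition of the log-likelihood ratio into the invariant-measure term, a linear (martingale) term, a quadratic term, and a higher-order remainder; the same use of Brown's martingale CLT for the linear term (with the Lindeberg condition trivialised by boundedness); the same use of the ergodic theorem for the quadratic term; and the same reliance on the normalisation identities $\int\Phi'(0)\,dy=\int\Phi''(0)\,dy=0$ to identify the martingale structure and the limiting variance.

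The one point where you diverge slightly is the control of the third-order remainder. The paper exploits the real analyticity of $\Phi$ and the geometric growth of its Taylor coefficients (established in Lemma~\ref{lem-reg-main}) to get the global bound $R_f(\eta)\le C|\eta|^3$ directly in a fixed neighbourhood of $0$, without any reference to intermediate evaluation points. You instead use a Lagrange-form remainder $\frac{1}{6}L'''(\xi)\eta^3$ and handle the uniformity issue at intermediate $\xi$ by reapplying Theorem~\ref{thmderivative} at the shifted base point $b+\xi h$ in a slightly enlarged $C^1_0$-ball. This works, since the constants in Section~\ref{seclocalapprox} and Section~\ref{secspectral} depend only on the radius of the $C^1$-ball and are finite for any fixed radius, though this extension is implicit rather than stated in Theorem~\ref{thmderivative} itself. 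Both routes are valid, and the discrepancy does not alter the substance of the argument.
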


Note that due to the nature of the non-i.i.d. Markov chain data at hand,
$A_{b}$ necessarily needs to map into a function space of two variables,
as the overall $\log $-likelihood cannot be formed as a sum of functions
of single states of the chain, but only of increments of the chain.

\subsection{Potential statistical applications of Theorem
\ref{thmlanexpansion}}\label{sec-appl}
The LAN expansion can be used to obtain semiparametric lower bounds for
the estimation of certain linear functionals $L(b)$ for which there
exists a Riesz representer $\Psi \in C^{1}_{0}$ such that $L(\cdot )=
\langle \Psi , \cdot \rangle_{LAN}$, and can potentially further be used
to prove a non-parametric Bernstein-von-Mises theorem.

To make this more precise, we define the `information operator' (which
generalizes the Fisher information) by $I_{b}:=A_{b}^{*}A_{b}: C^{1}
_{0}\to L^{2}$, where $A_{b}$ from (\ref{eqscoreoperator}) is viewed as
a densely defined operator on $L^{2}$ with domain $C^{1}_{0}$ and
$A_{b}^{*}$ is the adjoint of $A_{b}$ with respect to the inner products
$\langle \cdot , \cdot \rangle_{L^{2}}$ and $\langle \cdot ,\cdot
\rangle_{L^{2}(p_{\Delta ,b}\mu_{b})}$. Then, for example, to study
semiparametric Cram\'{e}r-Rao lower bounds for functionals of the form
$L(b)=\langle \psi ,b\rangle_{L^{2}}$, $\psi \in L^{2}$, one needs that
there is some $\Psi \in C^{1}_{0}$ such that
\begin{equation*}
\forall w\in C^{1}_{0}:~~ \langle \psi ,w\rangle_{L^{2}}=\langle
\Psi ,w\rangle_{LAN}=\langle I_{b}\Psi , w\rangle_{L^{2}}.
\end{equation*}
Hence one needs that $\psi $ lies in the range $R(I_{b})$ of
$I_{b}$ (or at least of $R(A_{b}^{*})$), see p.372-373 in
\cite{vdv} for a detailed discussion. Assuming the injectivity of
$I_{b}$, the `optimal asymptotic variance' for estimators of
$L(b)$ is then given by
\begin{equation*}
\|\Psi \|_{LAN}^{2}=\langle A_{b}\Psi ,A_{b}\Psi
\rangle_{L^{2}(p_{\Delta ,b}\mu_{b})},
\end{equation*}
which may intuitively be understood as an `inverse Fisher information
$\langle \psi ,\break I_{b}^{-1}\psi \rangle_{L^{2}}$', in analogy to the
parametric setting.

When $R(I_{b})$ is known to contain at least a `nice' subspace of
functions, e.g. $C_{c}^{\infty }$, $I_{b}$ can be inverted on that
subspace, and if key mapping properties of $I_{b}^{-1}$ are known, then
along the lines of \cite{n17,ns17,NR18,monard}, one can further
try to prove a nonparametric BvM. This would assert the convergence of
infinite-dimensional posterior distributions to a Gaussian limit measure
$\mathcal{G}$ whose covariance is given by the LAN inner product via
$\textnormal{Cov}[G(\psi_{1}),G(\psi_{2})]=\langle \Psi_{1},\Psi_{2}
\rangle_{LAN}$, cf. (28) in \cite{n17}.

The identification of $R(I_{b})$ in the present case of diffusions
sampled at low frequency, as well as the study of mapping properties of
$I_{b}$, remain challenging open problems.

\subsection{Proof of the LAN expansion}\label{sec-lan-pf}
We now give the proof of Theorem \ref{thmlanexpansion}, assuming the
validity of Theorem \ref{thmderivative} which is proven in Section
\ref{seclocalapprox} below. Besides Theorem \ref{thmderivative}, the
other key ingredient for Theorem \ref{thmlanexpansion} is the following
CLT for martingale difference sequences. It is due to Brown (building
on ideas of Billingsley and L\'{e}vy) and follows immediately from the
special case $t=1$ in Theorem 2 of \cite{brown}.
\begin{prop}[cf. \cite{brown}]
\label{propmgclt}
Suppose $(\Omega , \mathcal{F},(\mathcal{F}_{n}:n\geq 0),\mathbb{P})$
is a filtered probability space and let $(M_{n}:n\in \mathbb{N})$ be a
$\mathcal{F}_{n}$-martingale with $M_{0}=0$. For $n\ge 1$, define the
increments $Y_{n}:=M_{n}-M_{n-1}$ and let
\begin{equation*}
\sigma_{n}^{2}:=\mathbb{E} \left[ Y_{n}^{2} \middle| \mathcal{F}_{n-1}
\right] ,
\qquad
V_{n}^{2}:=\sum_{i=1}^{n} \sigma_{i}^{2},
\qquad
s_{n}^{2}:=\mathbb{E}[V_{n}^{2}].
\end{equation*}
Suppose that $V_{n}^{2}s_{n}^{-2}\xrightarrow{n\to \infty } 1$ in
probability and that for all $\epsilon >0$,
\begin{equation}
\label{eqlindebergcond}
\frac{1}{s_{n}^{2}}\sum_{i=1}^{n}\mathbb{E}\left[ Y_{i}^{2}\mathbh{1}
\{|Y_{i}|\geq \epsilon s_{n}\}\right] \xrightarrow{n\to \infty } 0
\end{equation}
in probability. Then, as $n\to \infty $, we have
\begin{equation*}
M_{n}/s_{n}\xrightarrow{d}\mathcal{N}(0,1).
\end{equation*}
\end{prop}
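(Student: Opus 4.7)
The plan is to prove convergence of characteristic functions $\mathbb{E}[\exp(itM_n/s_n)] \to e^{-t^2/2}$ for every $t\in\mathbb{R}$, and then invoke L\'evy's continuity theorem. The two classical ingredients are a Lindeberg-type truncation (to reduce to bounded increments) followed by a telescoping martingale argument that exploits the conditional variance assumption.

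First I would perform a truncation to eliminate large jumps. Fix $\epsilon>0$ and set $Y_i^\epsilon := Y_i\mathbh{1}\{|Y_i|<\epsilon s_n\}-\mathbb{E}[Y_i\mathbh{1}\{|Y_i|<\epsilon s_n\}\mid\mathcal{F}_{i-1}]$, which is again a martingale difference. The Lindeberg condition (\ref{eqlindebergcond}) together with the martingale property of $Y_i$ gives $\mathbb{E}[(Y_i-Y_i^\epsilon)^2\mid\mathcal{F}_{i-1}]=\mathbb{E}[Y_i^2\mathbh{1}\{|Y_i|\ge\epsilon s_n\}\mid\mathcal{F}_{i-1}]+o(1)$ summed to $o(s_n^2)$, so the truncated partial sum $M_n^\epsilon$ differs from $M_n$ by $o_\mathbb{P}(s_n)$. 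The same input guarantees that the truncated conditional variances $\sigma_i^{\epsilon,2}$ satisfy $\sum_{i\le n}\sigma_i^{\epsilon,2}/s_n^2\to 1$ in probability. Hence it suffices to prove the CLT under the extra hypothesis $|Y_i|\le\epsilon s_n$ almost surely, and then send $\epsilon\downarrow 0$ at the end.

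Second, under this boundedness, I would introduce the telescoping random variable
\begin{equation*}
T_{n,k}(t):=\exp\!\Bigl(it\tfrac{M_k}{s_n}+\tfrac{t^2}{2s_n^2}\sum_{i=1}^{k}\sigma_i^2\Bigr),\qquad k=0,\dots,n,
\end{equation*}
and analyse $\mathbb{E}[T_{n,k}(t)\mid\mathcal{F}_{k-1}]$. A third-order Taylor expansion of $\exp(itY_k/s_n)$, combined with the martingale identities $\mathbb{E}[Y_k\mid\mathcal{F}_{k-1}]=0$ and $\mathbb{E}[Y_k^2\mid\mathcal{F}_{k-1}]=\sigma_k^2$, yields
\begin{equation*}
\mathbb{E}[T_{n,k}(t)\mid\mathcal{F}_{k-1}]=T_{n,k-1}(t)\bigl(1+R_{n,k}\bigr),
\end{equation*}
where $|R_{n,k}|\le C(t)\epsilon\cdot\sigma_k^2/s_n^2$ by the boundedness $|Y_k|\le \epsilon s_n$. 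Telescoping and taking unconditional expectations gives $|\mathbb{E}[T_{n,n}(t)]-1|\le C(t)\epsilon\cdot\mathbb{E}[V_n^2]/s_n^2=C(t)\epsilon$. Since $V_n^2/s_n^2\to 1$ in probability and $\exp(t^2 V_n^2/(2s_n^2))$ is uniformly bounded by $e^{t^2/2}$ times a constant (again by truncation), dividing by $\exp(t^2 V_n^2/(2s_n^2))$ and passing to the limit inside the expectation via dominated convergence yields $\mathbb{E}[\exp(itM_n/s_n)]\to e^{-t^2/2}$ up to an $O(\epsilon)$ error. Sending $\epsilon\downarrow 0$ completes the argument.

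The main obstacle is ensuring the Taylor remainders are controlled uniformly in $n$ while the number of factors in the telescoping product grows with $n$. The key insight that makes the argument work is the interplay between the two hypotheses: the truncation step (driven by the Lindeberg condition) provides the uniform pointwise bound $\max_i|Y_i|/s_n\le\epsilon$ that kills the cubic remainders on the $\sigma_k^2/s_n^2$ scale, while the conditional variance condition $V_n^2/s_n^2\to 1$ provides exactly the summation of $\sigma_k^2/s_n^2$ needed to obtain the correct Gaussian variance in the limit.
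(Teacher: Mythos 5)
First, note that the paper does not actually prove this proposition: it is quoted as Theorem 2 (with $t=1$) of Brown (1971), so your sketch is being compared against the classical proof of the martingale CLT rather than against an argument in the paper. Your architecture — reduce to bounded increments via a Lindeberg truncation, then control the characteristic function through the telescoping quantity $T_{n,k}(t)=\exp(itM_k/s_n+\tfrac{t^2}{2s_n^2}\sum_{i\le k}\sigma_i^2)$ — is exactly the standard route (McLeish/Hall--Heyde style), and most of the steps are right: the truncated increments are indeed martingale differences, the Lindeberg condition does control $\sum_i\mathbb{E}[(Y_i-Y_i^\epsilon)^2]$ and the truncated conditional variances, and the third-order Taylor bound $|R_{n,k}|\le C(t)\,\epsilon\,\sigma_k^2/s_n^2$ is correct once $|Y_k|\le\epsilon s_n$.

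There is, however, one genuine gap. To telescope you need $|\mathbb{E}[T_{n,n}(t)]-1|\le\sum_k\mathbb{E}[|T_{n,k-1}(t)|\,|R_{n,k}|]$, and both here and in the final division step you assert that $\exp(t^2V_n^2/(2s_n^2))$ is ``uniformly bounded \dots (again by truncation).'' This does not follow from truncating the increments: that truncation only gives $\sigma_k^2\le C\epsilon^2 s_n^2$ for each individual $k$, while the sum $V_n^2=\sum_{k\le n}\sigma_k^2$ can still be arbitrarily large on an event of small (but positive) probability, since the hypothesis $V_n^2/s_n^2\to1$ is only convergence in probability, not an almost-sure or uniform bound. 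Without this bound the telescoping estimate $\sum_k\mathbb{E}[|T_{n,k-1}|\,|R_{n,k}|]\le C(t)\epsilon\,\mathbb{E}[V_n^2]/s_n^2$ is not justified (you cannot pull $|T_{n,k-1}|$ out as a constant), and the dominated-convergence step at the end has no dominating function. The standard fix is a second, stopping-time truncation of the conditional variance process: replace $Y_k$ by $Y_k\mathbh{1}\{V_k^2\le 2s_n^2\}$ (the indicator is $\mathcal{F}_{k-1}$-measurable, so the martingale-difference property is preserved), which forces $V_n^2\le 2s_n^2+\max_k\sigma_k^2\le(2+C\epsilon^2)s_n^2$ pathwise and hence bounds $|T_{n,k}(t)|$ uniformly; the stopped sums agree with the original ones on an event of probability tending to $1$ precisely because $V_n^2/s_n^2\to1$ in probability. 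With that additional truncation inserted, your argument closes and reproduces the classical proof.
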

\begin{proof}[Proof of Theorem \ref{thmlanexpansion}]
Fix $b,h\in C^{1}_{0}$. Due to the spectral gap of the generator
$\mathcal{L}_{b}$ (see Lemma \ref{lem bounds on eigenfunctions}), the
Markov chain $(X_{n\Delta }:n\in \mathbb{N})$ originating from the
diffusion $(\ref{eqSDE})$ with initial distribution $X_{0}\sim \mu
_{b}$, is stationary and geometrically ergodic -- we will use this fact
repeatedly.

For notational convenience, we write
\begin{equation*}
f(\eta ,x,y)=\log p_{\Delta ,b+\eta h}(x,y),
\qquad
g(\eta ,x,y)=p_{\Delta ,b+\eta h}(x,y).
\end{equation*}
By Theorem \ref{thmderivative}, $f$ is smooth in $\eta $ on a
neighbourhood of $0$, and for some $C< \infty $, the second order Taylor
remainder satisfies
\begin{equation}
\label{eqerrorcontrol}
R_{f}(\eta ):=\sup_{x,y\in [0,1]}|f(\eta ,x,y)-f(0,x,y)-\eta
\partial_{\eta }f(0,x,y)-\frac{\eta^{2}}{2}\partial_{\eta }^{2} f(0,x,y)|
\leq C|\eta |^{3}.
\end{equation}
Thus, Taylor-expanding the log-likelihood (\ref{eqloglikelihood}) in
direction $h/\sqrt{n}$ yields that
\begin{equation}
\label{eqlikelihoodexpansion}
\begin{split}
\log
& \frac{d\mathbb{P}^{n}_{b+h/\sqrt{n}}}{d\mathbb{P}^{n}_{b}}(X
_{0},...,X_{n_{\Delta }})
\\
&=\big(\log \mu_{b+h/\sqrt{n}}(X_{0})-\log \mu_{b}(X_{0})\big)+\frac{1}{
\sqrt{n}}\sum_{i=1}^{n} \partial_{\eta }f(0,X_{(i-1)\Delta },X_{i
\Delta })
\\
&
\;\;
+\frac{1}{2n}\sum_{i=1}^{n} \partial_{\eta }^{2}f(0,X_{(i-1)\Delta },X
_{i\Delta })+D_{n}
\\
&=:A_{n}+B_{n}+C_{n} +D_{n}.
\end{split}
\end{equation}

For the remainder term $D_{n}$, we immediately see from
(\ref{eqerrorcontrol}) that $|D_{n}|\leq nR_{f}(n^{-1/2})$, whence
$D_{n}=o_{\mathbb{P}_{b}}(1)$ as $n\to \infty $.

For $C_{n}$, observe that the function $\partial_{\eta }^{2}f(0,
\cdot ,\cdot )$ is bounded by Theorem \ref{thmderivative}, such that the
almost sure ergodic theorem yields that
\begin{equation*}
C_{n}\xrightarrow{n\to \infty } \frac{1}{2}\mathbb{E}_{b}[
\partial_{\eta }^{2}f(0,X_{0},X_{\Delta })]\quad \text{ a.s.},
\end{equation*}
where $\mathbb{E}_{b}$ denotes the expectation with respect to
$\mathbb{P}_{b}$. Moreover, we have
\begin{equation*}
\begin{split}
\mathbb{\partial }_{\eta }^{2}f(0,X_{0},X_{\Delta })=\frac{\partial
^{2}_{\eta }g(0,X_{0},X_{\Delta })}{g(0,X_{0},X_{\Delta })}-\big(
\partial_{\eta }f(0,X_{0},X_{\Delta })\big)^{2}=: I+II,
\end{split}
\end{equation*}
and by interchanging differentiation and integration (which is possible
by Theorem \ref{thmderivative}), we see that
\begin{equation*}
\mathbb{E}[I]=\int_{0}^{1}\int_{0}^{1} \partial_{\eta }^{2}g(0,x,y)\mu
_{b}(x)dxdy=0,
\end{equation*}
and hence $\mathbb{E}_{b}[\partial_{\eta }^{2}f(0,X_{0},X_{\Delta })]=-
\langle A_{b}h,A_{b}h\rangle_{L^{2}(p_{\Delta ,b}\mu_{b})}=-\|h\|^{2}
_{LAN}.$

We next treat $B_{n}$. Let $(\mathcal{F}_{n}:n\geq 0)$ denote the
natural filtration of $(X_{\Delta n}:n\geq 0)$. In view of Proposition
\ref{propmgclt}, let us write
\begin{equation*}
\begin{split}
Y_{n}
&=\;\partial_{\eta }f(0,X_{(n-1)\Delta },X_{n\Delta }),
~~~~
M_{n}:=\sqrt{n} B_{n}=\sum_{i=1}^{n}Y_{n},
\\
\sigma^{2}_{n}
&=\mathbb{E}\left[ Y_{n}^{2} \middle| X_{(n-1)\Delta }\right] ,
\qquad
V_{n}^{2}=\sum_{i=1}^{n} \sigma_{i}^{2},
\qquad
s_{n}^{2}=\mathbb{E}[V_{n}^{2}].
\end{split}
\end{equation*}
Then, using dominated convergence and the Markov property, we see that
$M_{0}=0$ and that $(M_{n}:n\ge 0)$ is a martingale:
\begin{equation*}
\begin{split}
\mathbb{E}[Y_{n}|\mathcal{F}_{n-1}]
&=\int_{0}^{1}\partial_{\eta }f(0,X
_{(n-1)\Delta },y)p_{\Delta ,b}(X_{(n-1)\Delta },y)dy
\\
&=\int_{0}^{1}\partial_{\eta }g(0,X_{(n-1)\Delta },y)dy
\\
&=\partial_{\eta }\int_{0}^{1}p_{\Delta ,b+\eta h}(X_{(n-1)\Delta },y)dy~
\Big|_{\eta =0}=0.
\end{split}
\end{equation*}
Moreover, we have that $\sigma^{2}_{n}=\tilde{\sigma }^{2}(X_{(n-1)
\Delta })$ for some bounded measurable function $\tilde{\sigma }^{2}:[0,1]
\to [0,\infty )$ and by the stationarity of $(X_{i\Delta }:i\geq 0)$,
we have $s_{n}^{2}=n \mathbb{E}_{b}[\tilde{\sigma }^{2}(X_{0})]=n\|h
\|_{LAN}^{2}$, whence the ergodic theorem yields that $\mathbb{P}_{b}$--
a.s.,
\begin{equation*}
V_{n}^{2}s_{n}^{-2}=\frac{1}{n\|h\|_{LAN}^{2}}\sum_{i=1}^{n}
\tilde{\sigma }^{2}(X_{(n-1)\Delta }) \xrightarrow{n\to \infty } \|h
\|_{LAN}^{-2}\mathbb{E}_{b}[(\partial_{\eta }f(0,X_{0},X_{1}))^{2}]=1.
\end{equation*}
Lastly, as the $Y_{i}$'s are bounded random variables, the condition
(\ref{eqlindebergcond}) is fulfilled. Hence Proposition
\ref{propmgclt} yields that
$B_{n}\to^{d}\mathcal{N}(0,\|h\|^{2}_{LAN})$.

Finally, we observe that the term $A_{n}$ in
(\ref{eqlikelihoodexpansion}) from the invariant measure is of order
$o_{\mathbb{P}_{b}}(1)$, as it can be bounded uniformly over $b,h$ using
(\ref{eqinvariantmeasure}):
\begin{equation*}
\everymath{\displaystyle}
\begin{array}[b]{l}
\lvert \log \mu_{b+h/\sqrt{n}}(X_{0})-\log \mu_{b}(X_{0})\rvert
\lesssim \|\mu_{b+h/\sqrt{n}}-\mu_{b}\|_{\infty }
\\\noalign{\vspace{4pt}}
\;\;
\lesssim \Big \|\frac{e^{\int_{0}^{\cdot }\left( b+h/\sqrt{n}\right) (y)dy}}{
\int_{0}^{1}e^{\int_{0}^{x}\left( b+h/\sqrt{n}\right) (y)dy}dx}-\frac{e
^{\int_{0}^{\cdot }b(y)dy}}{\int_{0}^{1}e^{\int_{0}^{x}b(y)dy}dx}
\Big \|_{\infty }\xrightarrow{n\to \infty }0.
\end{array}   \qedhere
\end{equation*}
\end{proof}

\section{Local approximation of transition densities}\label{seclocalapprox}
In this section, we study the differentiability properties of
$p_{t,b}(x,y)$ as a function of the drift $b$, and the main goal is to
prove Theorem \ref{thmderivative}. For technical reasons, we first prove
a regularized version of it (Lemma \ref{lem-reg-main} in Section
\ref{subsecreg}) and then let the regularization parameter $\delta >0$
tend to $0$ to obtain Theorem \ref{thmderivative} (Section
\ref{subsecthm1pf}).

\subsection{Preliminaries and notation}

We begin by introducing some notation and important classical results.

\subsubsection{Some function spaces}

For any integer $s\geq 0$, we equip the Sobolev space
$H^{s}=H^{s}((0,1))$ with the inner product
\begin{equation}
\label{eq-Hs-prod}
\langle g_{1},g_{2}\rangle_{H^{s}}=\langle g_{1},g_{2}\rangle_{L^{2}}+
\langle g_{1}^{(s)},g_{2}^{(s)}\rangle_{L^{2}},
\end{equation}
where $L^{2}$ is the usual space of square integrable functions with
respect to Lebesgue measure. Occasionally it will be convenient to
replace the $L^{2}$--inner product above by the $L^{2}(\mu )$--inner
product, where $\mu $ is the invariant measure of $(X_{t}:t\geq 0)$,
which by (\ref{eqinvmeasbdd}) induces a norm which is equivalent to the
norm induced by (\ref{eq-Hs-prod}).

We will also use the fractional Sobolev spaces $H^{s}$ for real
$s\geq 0$, which are obtained by interpolation, see
\cite{lionsmagenes}. For $s>\frac{1}{2}$, the Sobolev embedding
(\ref{eqsobolevemb}) implies that any function $f\in H^{s}$ extends
uniquely to a continuous function on $[0,1]$. The following standard
interpolation equalities and embeddings (see
\cite{lionsmagenes}, p.44-45) will be used throughout. For all
$s_{1},s_{2}\geq 0$ and $\theta \in (0,1)$, we have
\begin{equation}
\label{eqsobolevinter}
\forall f\in H^{s_{1}}\cap H^{s_{2}}:
~~
\|f\|_{H^{\theta s_{1}+(1-\theta )s_{2}}}\leq C(\theta ,s_{1},s_{2})
\|f\|_{H^{s_{1}}}^{\theta }\|f\|_{H^{s_{2}}}^{1-\theta },
\end{equation}
and for each $s>1/2$, we have the multiplicative inequality
\begin{equation}
\label{h-mult}
\forall f,g\in H^{s}:
~~
\|fg\|_{H^{s}}\lesssim C(s)\|f\|_{H^{s}} \|g\|_{H^{s}}
\end{equation}
as well as the continuous embedding
\begin{equation}
\label{eqsobolevemb}
H^{s}\subseteq C([0,1]), \quad \|f\|_{\infty }\leq C(s) \|f\|_{H^{s}},
\end{equation}
where $C([0,1])$ denotes the space of continuous functions on
$[0,1]$. Moreover, for any $s>0$, we define the negative order Sobolev
space $H^{-s}$ as the topological dual space of $H^{s}$, where for any
$f\in L^{2}$, the norm can be written as
\begin{equation*}
\|f\|_{H^{-s}}=\sup_{\psi \in H^{s}, \|\psi \|_{H^{s}}\le 1}\big|\int
_{0}^{1} f\psi \big|.
\end{equation*}

For any $T>0$, any Banach space $(X,\|\cdot \|)$ and any integer
$k\geq 0$, we denote by $C^{k}([0,T],X)$ the $k$-times continuously
differentiable functions from $[0,T]$ to $X$, equipped with the norm
\begin{equation*}
\|f\|_{C^{k}([0,T],X)}=\sum_{i=0}^{k}\sup_{t\in [0,T]}\big \|\frac{d
^{i}}{dt^{i}}f(t)\big \|.
\end{equation*}
For $\alpha >0$ with $\alpha \not \in \mathbb{N}$, we denote the space
of $\alpha $-H\"{o}lder continuous functions $f:[0,T]\to X$ by
$C^{\alpha }([0,T],X)$ and equip it with the usual norm
\begin{equation*}
\|f\|_{C^{\alpha }([0,T],X)}=\|f\|_{C^{\lfloor \alpha \rfloor }([0,T],X)}+
\sup_{s,t\in [0,T], s\neq t}\frac{\big \|\frac{d^{\lfloor \alpha
\rfloor }}{dt^{\lfloor \alpha \rfloor }}f(t)-\frac{d^{\lfloor \alpha
\rfloor }}{dt^{\lfloor \alpha \rfloor }}f(s)\big \|}{\big|t-s\big|^{
\alpha -\lfloor \alpha \rfloor }}.
\end{equation*}
We will frequently, without further comment, interpret functions
$f:[0,T]\times [0,1]\to \mathbb{R}$ as $L^{2}$-valued maps $f:[0,T]
\to L^{2}, f(t)=f(t,\cdot )$, and vice versa.

\subsubsection{The differential operator $\mathcal{L}_{b}$}

For any drift function $b\in C^{1}_0$, we define the differential operator
\begin{equation*}
\begin{split}
\mathcal{L}_{b}f(x):=
&\; f''(x)+b(x)f'(x)
\qquad
\text{for }f\in \mathcal{D},
\\
\mathcal{D}:=
&\left\{ f\in H^{2}: f'(0)=f'(1)=0\right\} .
\end{split}
\end{equation*}
It is well-known that $\mathcal{L}_{b}$ is the infinitesimal generator
of the semigroup $(P_{t,b}:t\geq 0)$ defined in (\ref{eqsemigroup}), so
that we get by the usual functional calculus that $P_{t,b}=e^{t
\mathcal{L}_{b}}$ for all $t\geq 0$ (with the convention $e^{0}=Id$).
The fact that the domain $\mathcal{D}$ of $\mathcal{L}_{b}$ is equipped
with Neumann boundary conditions corresponds to the diffusion being
reflected at the boundary, see \cite{hansenetal} for a detailed
discussion. We equip $\mathcal{D}$ with the graph norm
\begin{equation*}
\|f\|_{\mathcal{L}_{b}}:=\big(\|\mathcal{L}_{b}f\|^{2}_{L^{2}(\mu_{b})}+
\| f\|^{2}_{L^{2}(\mu_{b})}\big)^{1/2},
\end{equation*}
which by Lemma \ref{lemnormequiv} is equivalent to the $H^{2}$-norm on
$\mathcal{D}$. Moreover, for $h\in H^{1}$, we define the first order
differential operator
\begin{equation}
\label{eqLh}
L_{h}f(x)=h(x)f'(x),
\qquad
f\in H^{1}.
\end{equation}

The operator $\mathcal{L}_{b}$ has a purely discrete spectrum
$\text{Spec}(\mathcal{L}_{b})\subseteq (-\infty ,0]$ (see
\cite{daviesspectral}, Theorem 7.2.2). We will denote by $(u_{j,b})_{j
\geq 0}$ the $L^{2}(\mu_{b})$-normalized orthogonal basis of
$L^{2}(\mu_{b})$ consisting of the eigenfunctions $u_{j,b}\in
\mathcal{D}$ of $\mathcal{L}_{b}$, ordered such that the corresponding
eigenvalues $(\lambda_{j,b})_{j\geq 0}$ are non-increasing. When there
is no ambiguity, we will often simply write $\lambda_{j}$ and
$u_{j}$. We will use throughout the spectral decomposition
\begin{equation}
\label{eqspectral}
p_{t,b}(x,y)=\sum_{j\geq 0}e^{\lambda_{j} t}u_{j}(x)u_{j}(y)\mu (y),
\quad x,y\in [0,1],~ t>0,
\end{equation}
see e.g. p. 101 in \cite{bgl}, and the spectral representations
\begin{align}
&\mathcal{L}_{b}f=\sum_{j\ge 0}\lambda_{j}\langle f, u_{j}
\rangle_{L^{2}(\mu )}u_{j},
~~
f\in \mathcal{D},
\label{Lb-spectral}\\
&P_{t,b}f=\sum_{j\ge 0}e^{t\lambda_{j}}\langle f, u_{j}
\rangle_{L^{2}(\mu )}u_{j},
~~
f\in L^{2}, ~t>0.
\label{Pt-spectral}
\end{align}
We also note that (\ref{eqinvariantmeasure}) immediately yields that
there exist constants $0<C<C'<\infty $ such that for all $b\in \Theta
$ and all $x\in [0,1]$,
\begin{equation}
\label{eqinvmeasbdd}
C\leq \mu_{b}(x)\leq C'.
\end{equation}

\subsubsection{A key PDE result}

For any $f\in C([0,T],L^{2})$ and $u_{0}\in \mathcal{D}$, consider the
inhomogeneous parabolic equation
\begin{equation}
\label{eqparabolic}
\begin{cases}
\frac{d}{dt}u(t)=\mathcal{L}_{b}u(t)+f(t)
\qquad
\text{for all }t\in [0,T],
\\
u(0)=u_{0}.
\end{cases}
\end{equation}
We say that a function $u: [0,T]\to L^{2}$ is a solution to
(\ref{eqparabolic}) if $ u\in C^{1}([0,T],L^{2})\cap C([0,T],
\mathcal{D})$ and (\ref{eqparabolic}) holds. The next proposition
regarding the existence, uniqueness and regularity properties of
solutions to (\ref{eqparabolic}) will play a key role for the proofs in
the rest of Section \ref{seclocalapprox}. To state the result, we need
the following interpolation spaces $\mathcal{D}(\alpha ), 0\leq
\alpha \leq 1$, between $L^{2}$ and $\mathcal{D}$:
\begin{align}
&\mathcal{D}(\alpha ):=\big \{f\in L^{2}:\omega (t):=t^{-\alpha }
\left\| P_{t,b}f-f\right\| _{L^{2}(\mu_{b})} \text{ is bounded on }t
\in (0,1] \big \},
\nonumber
\\
&\|f\|_{\mathcal{D}(\alpha )}:=\|f\|_{L^{2}(\mu_{b})}+\sup_{t\in [0,1]}
\omega (t).
\label{eqDalpha}
\end{align}
\begin{prop}
\label{proplunardi}
Suppose $0<\alpha <1$, $f\in C^{\alpha }([0,T],L^{2})$ and $u_{0}
\in \mathcal{D}$. Then there exists a unique solution $u$ to
(\ref{eqparabolic}), given by the Bochner integral
\begin{equation}
\label{eqduhamel}
u(t)=P_{t,b}u_{0}+\int_{0}^{t}P_{t-s,b}f(s)ds,
~~
t\in [0,T].
\end{equation}
If also $f(0)+\mathcal{L}_{b}u_{0}\in \mathcal{D}(\alpha )$, then we
have $u\in C^{1+\alpha }([0,T],L^{2})\cap C^{\alpha }([0,T],
\mathcal{D})$ and there exists $C<\infty $ so that for all such $f$ and
$u_{0}$,
\begin{equation*}
\begin{split}
\|u\|_{C^{1+\alpha }([0,T],L^{2})}
&+\|u\|_{C^{\alpha }([0,T],
\mathcal{D})}
\\
&\leq C\left( \|f\|_{C^{\alpha }([0,T],L^{2})}+\|u_{0}\|_{\mathcal{L}
_{b}}+\|f(0)+\mathcal{L}_{b}u_{0}\|_{\mathcal{D}(\alpha )}\right) .
\end{split}
\end{equation*}
\end{prop}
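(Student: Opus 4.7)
The plan is to recognize Proposition \ref{proplunardi} as a standard optimal-regularity result for abstract inhomogeneous Cauchy problems driven by generators of analytic semigroups (as developed in Lunardi's monograph on analytic semigroups and parabolic problems), and to verify that the hypotheses of that general theory are met by $\mathcal{L}_b$ in our setting.

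First, I would establish that $\mathcal{L}_b$ generates an analytic semigroup on $L^2$. By the spectral data recorded in (\ref{Lb-spectral})--(\ref{Pt-spectral}), $\mathcal{L}_b$ is self-adjoint on $L^2(\mu_b)$ with spectrum contained in $(-\infty,0]$, and consequently the spectral formula extends holomorphically to $P_{z,b}f=\sum_j e^{z\lambda_j}\langle f,u_j\rangle_{L^2(\mu_b)} u_j$ for $\mathrm{Re}(z)>0$, showing analyticity. By (\ref{eqinvmeasbdd}) the norms on $L^2$ and $L^2(\mu_b)$ are equivalent, so the same holds as an operator on $L^2$. In particular, one has the standard smoothing bounds $\|\mathcal{L}_b P_{t,b}\|_{L^2\to L^2}\lesssim t^{-1}$ for $t\in(0,T]$.

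Next, I would identify the space $\mathcal{D}(\alpha)$ defined in (\ref{eqDalpha}) with the real interpolation space $(L^2,\mathcal{D})_{\alpha,\infty}$ characterized by the semigroup, so the compatibility assumption $f(0)+\mathcal{L}_b u_0\in\mathcal{D}(\alpha)$ is the classical Favard-class condition needed for optimal H\"{o}lder regularity at $t=0$. On this space, analytic-semigroup theory yields the key estimate $\|\mathcal{L}_b P_{t,b}g\|_{L^2}\lesssim t^{\alpha-1}\|g\|_{\mathcal{D}(\alpha)}$ for all $g\in\mathcal{D}(\alpha)$.

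With this machinery in place, the remaining steps are mostly standard. For uniqueness, given any classical solution $u\in C^1([0,T],L^2)\cap C([0,T],\mathcal{D})$, I would differentiate $s\mapsto P_{t-s,b}u(s)$ and integrate over $[0,t]$ to recover (\ref{eqduhamel}). For existence, one verifies that the function defined by (\ref{eqduhamel}) is a classical solution, the main technical point being the analysis of the convolution term via the splitting
\begin{equation*}
\int_0^t P_{t-s,b}f(s)\,ds=\int_0^t P_{t-s,b}\bigl[f(s)-f(t)\bigr]\,ds+\int_0^t P_{t-s,b}f(t)\,ds,
\end{equation*}
where the first integrand gains integrability from the H\"{o}lder modulus of $f$, while the second equals $\int_0^t P_{s,b}f(t)\,ds$ and is handled by a direct computation using $\partial_s P_{s,b}=\mathcal{L}_b P_{s,b}$. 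This yields $u\in C^1([0,T],L^2)\cap C([0,T],\mathcal{D})$ together with the linear bound on the first and third summands of the estimate. Finally, for the sharpened regularity $u\in C^{1+\alpha}([0,T],L^2)\cap C^\alpha([0,T],\mathcal{D})$ one inserts the Favard-class estimate into a further splitting of the Duhamel integral around $s=t$ to control the H\"{o}lder seminorm of $\mathcal{L}_b u$, and uses the equation itself to transfer regularity to $u'$.

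The main obstacle, and the only point where one must be careful rather than simply cite, is the boundary behaviour at $t=0$: on any interval $[\varepsilon,T]$ the analyticity of $(P_{t,b})$ alone gives arbitrary smoothness, but H\"{o}lder continuity up to $t=0$ of $u'$ and $\mathcal{L}_b u$ relies precisely on the compatibility condition $f(0)+\mathcal{L}_b u_0\in\mathcal{D}(\alpha)$, since $u'(0)=f(0)+\mathcal{L}_b u_0$ forces this quantity to belong to the correct interpolation class for the H\"{o}lder estimate to close. Once this is pinned down, the quantitative bound follows by tracking constants through the above splittings, with the three summands on the right-hand side of the displayed inequality corresponding respectively to the convolution with $f$, the contribution of $P_{t,b}u_0$, and the initial-time compatibility term.
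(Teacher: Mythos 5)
Your proposal takes essentially the same route as the paper: recognize the statement as Lunardi's abstract optimal-regularity theorem (Theorem 4.3.1 in that monograph), verify that $\mathcal{L}_b$ is sectorial so that $(P_{t,b})$ is an analytic semigroup on $L^2$, and identify $\mathcal{D}(\alpha)$ with the real interpolation class $D_A(\alpha,\infty)$. The only cosmetic difference is that you verify analyticity by holomorphically extending the spectral resolution while the paper checks the sectorial resolvent estimate directly; the extra detail you supply about the internal workings of Lunardi's proof (the Duhamel splitting and the boundary behaviour at $t=0$) is correct but amounts to reproducing the cited argument rather than a genuinely different approach.
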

\begin{proof}
This is a special case of Theorem 4.3.1 (iii) in \cite{lunardi}
with $X=L^{2}(\mu_{b})$ and $A=\mathcal{L}_{b}$, where we note that the
integral formula (\ref{eqduhamel}) is given by Proposition 4.1.2 in the
same reference. We also note that $\mathcal{D}(\alpha )$ coincides with
the space $D_{A}(\alpha ,\infty )$ from \cite{lunardi} with
equivalent norms, see Proposition 2.2.4 in \cite{lunardi}. It
therefore suffices to verify that the general theory for parabolic PDEs
developed in \cite{lunardi} applies to our particular case. For
that, we need to check that $(P_{t,b}:t\geq 0)$ is an analytic semigroup
of operators on $L^{2}$ in the sense of \cite{lunardi}, p.34,
which requires the following.
\begin{enumerate}
\item
For some $\theta \in (\pi /2,\pi )$ and $\omega \in \mathbb{R}$, the
resolvent set $\rho (\mathcal{L}_{b})$ of $\mathcal{L}_{b}$ contains the
sector $S_{\theta , \omega }\subseteq \mathbb{C}$, where $S_{\theta ,
\omega }$ is defined by
\begin{equation*}
\label{eqsectorial1}
S_{\theta ,\omega }:=\{\lambda \in \mathbb{C}: \lambda \neq \omega , |
\arg (\lambda -\omega )|<\theta \}.
\end{equation*}

\item
There exists some $M<\infty $ such that we have the resolvent estimate
\begin{equation*}
\|R(\lambda ,\mathcal{L}_{b})\|_{L^{2}\to L^{2}}\leq M|\lambda -
\omega |^{-1}
~~~
\forall \lambda \in S_{\theta ,\omega }.
\end{equation*}
\end{enumerate}
As $\mathcal{L}_{b}$ has a discrete spectrum contained in the
non-positive half line, both of the above properties are easily checked
with $\omega =0$ and any $\theta \in (\frac{\pi }{2},\pi )$.
\end{proof}

\begin{defin}[Solution operator]
In what follows, we denote by $\mathcal{S}= ( \frac{d}{dt}-\break
\mathcal{L}_{b} ) ^{-1}$ the linear solution operator which maps any
$f\in C^{\alpha }([0,T],L^{2})$, $0<\alpha <1$, to the unique solution
$u= \mathcal{S}(f)$ of the parabolic problem
\begin{equation}
\label{Sdef}
\begin{cases}
\left( \frac{d}{dt}-\mathcal{L}_{b}\right) u(t)=f(t),\quad t\in [0,T],
\\
u(0)=0.
\end{cases}
\end{equation}
\end{defin}

\subsection{Approximation of regularized transition densities}\label{subsecreg}
The main result of this section is Lemma \ref{lem-reg-main}, which can
be viewed as a regularized version of Theorem \ref{thmderivative}. The
main tools used to prove it are Proposition \ref{proplunardi} as well
as the spectral analysis of $\mathcal{L}_{b}$ and $P_{t,b}$ from Section
\ref{secspectral}.

In order to apply Proposition \ref{proplunardi}, we view the transition
densities $p_{t,b}(x,y)$ as functions of the two variables $(t,x)
\in [0,T]\times [0,1]$ with $y\in [0,1]$ fixed, where $T$ is an
arbitrary constant $T>\Delta >0$ (with the convention that $p_{0,b}(
\cdot ,y)$ is the point mass at $y$). Due to the singular behaviour of
$p_{t,b}(x,y)$ for $(t,x)\to (0,y)$, a regularisation argument is
needed. For any $\delta >0$ and $d\in C^{1}_{0}$, define the
$\delta $- regularized transition densities by
\begin{equation*}
\begin{split}
u_{d}^{\delta }: [0,\infty )\times [0,1]\to \mathbb{R},
~~
u_{d}^{\delta }(t,x):=P_{\delta ,0}(p_{t,d}(x,\cdot ))(y),
\end{split}
\end{equation*}
where $(P_{t,0}:t\geq 0)$ denotes the transition semigroup for $b=0$,
which corresponds to reflected Brownian motion.

\subsubsection{Recursive definition of approximations}\label{sec-rec-def}
We now implicitly define the `candidate' local approximations to
$u_{d}^{\delta }$ as solutions to certain parabolic PDEs. To that end,
we note that using (\ref{eqsemigroup}), one easily checks that for all
$t\geq 0$,
\begin{equation}
\label{eq-phi-delta}
u_{d}^{\delta }(t)=P_{t,d}\varphi_{\delta },
~~
\text{where}
~~
\varphi_{\delta }(x):=p_{\delta ,0}(y,x).
\end{equation}

Hence we can give the following crucial PDE interpretation to
$u_{d}^{\delta }$.
\begin{lem}
\label{lemudelta}
For any $d\in C^{1}_{0}$, we have that $u_{d}^{\delta }\in C^{3/2}([0,T],L
^{2})\cap C^{1/2}([0,T],\mathcal{D})$, and $u_{d}^{\delta }$ is the
unique solution to the initial value problem
\begin{equation}
\label{equdeltapde}
\begin{cases}
\mathcal{(}\frac{d}{dt}-\mathcal{L}_{d})u(t)=0 \;\text{ for all }t
\in [0,T],
\\
u(0)=\varphi_{\delta }.
\end{cases}
\end{equation}
\end{lem}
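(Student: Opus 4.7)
The central observation is (\ref{eq-phi-delta}): by the semigroup property, $u_d^\delta(t) = P_{t,d}\varphi_\delta$, so that $u_d^\delta$ is the solution given by Duhamel's formula (\ref{eqduhamel}) with source $f\equiv 0$ and initial condition $u_0=\varphi_\delta$. The strategy is therefore to invoke Proposition \ref{proplunardi} with $\alpha = 1/2$, $f\equiv 0$, $u_0 = \varphi_\delta$, which will simultaneously yield that $u_d^\delta$ solves (\ref{equdeltapde}), that it has the claimed regularity $C^{3/2}([0,T],L^2)\cap C^{1/2}([0,T],\mathcal{D})$, and that it is the unique such solution.

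\textbf{Step 1: Regularity of $\varphi_\delta$.} I would first establish that $\varphi_\delta = p_{\delta,0}(y,\cdot)$ is a smooth function lying in $\mathcal{D}$. This is cleanest via the explicit spectral expansion for reflected Brownian motion on $[0,1]$: the eigenfunctions are the cosine basis $\{1,\sqrt{2}\cos(\pi k\cdot)\}_{k\ge 1}$ with eigenvalues $-\pi^2 k^2$, so (\ref{eqspectral}) gives
\begin{equation*}
\varphi_\delta(x) \;=\; 1 + 2\sum_{k\ge 1} e^{-\pi^2 k^2 \delta}\cos(\pi k y)\cos(\pi k x).
\end{equation*}
The exponential decay $e^{-\pi^2 k^2\delta}$ makes this series converge in every Sobolev norm $H^s$, so $\varphi_\delta \in C^\infty([0,1])$ with $\varphi_\delta'(0) = \varphi_\delta'(1) = 0$; in particular $\varphi_\delta \in \mathcal{D}$. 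Alternatively, one could use the $C^\infty$-smoothing properties of $P_{\delta,0}$ which follow from the spectral analysis of Section \ref{secspectral}.

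\textbf{Step 2: Verifying $\mathcal{L}_d \varphi_\delta \in \mathcal{D}(1/2)$.} Since $\mathcal{L}_d\varphi_\delta = \varphi_\delta'' + d\varphi_\delta'$ with $d \in C^1_0$ and $\varphi_\delta$ smooth, the function $\mathcal{L}_d\varphi_\delta$ lies in $H^s$ for every $s\ge 0$. I would then show that for sufficiently smooth $g$, the quantity $\omega(t) = t^{-1/2}\|P_{t,d}g - g\|_{L^2(\mu_d)}$ is bounded on $(0,1]$. The natural route is the spectral representation (\ref{Pt-spectral}), which yields
\begin{equation*}
\|P_{t,d}g - g\|_{L^2(\mu_d)}^2 \;=\; \sum_{j\ge 0}(1-e^{t\lambda_j})^2 |\langle g,u_j\rangle_{L^2(\mu_d)}|^2,
\end{equation*}
and using $|1-e^{t\lambda_j}|\le (t|\lambda_j|)^{1/2}$ bounds this by $t\|(-\mathcal{L}_d)^{1/2}g\|_{L^2(\mu_d)}^2$, which is finite for $g = \mathcal{L}_d\varphi_\delta$ in view of the smoothness established in Step 1 combined with the eigenfunction bounds in Lemma \ref{lem bounds on eigenfunctions}. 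Hence $\mathcal{L}_d\varphi_\delta \in \mathcal{D}(1/2)$.

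\textbf{Step 3: Conclusion.} With $f \equiv 0$, $u_0 = \varphi_\delta \in \mathcal{D}$, and $f(0)+\mathcal{L}_d u_0 = \mathcal{L}_d\varphi_\delta \in \mathcal{D}(1/2)$ verified, Proposition \ref{proplunardi} applies with $\alpha = 1/2$ and produces a unique solution $u \in C^{3/2}([0,T],L^2)\cap C^{1/2}([0,T],\mathcal{D})$ of (\ref{equdeltapde}), given by $u(t) = P_{t,d}\varphi_\delta$. Comparing this with (\ref{eq-phi-delta}), this solution coincides with $u_d^\delta$, proving both the regularity claim and that $u_d^\delta$ solves (\ref{equdeltapde}). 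Uniqueness is built into Proposition \ref{proplunardi}.

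\textbf{Main obstacle.} The only non-routine step is Step 2: the verification that $\mathcal{L}_d\varphi_\delta \in \mathcal{D}(1/2)$ is not automatic from the definition of $\mathcal{D}(\alpha)$ and requires either a spectral argument relying on Section \ref{secspectral}, or (equivalently) an identification of $\mathcal{D}(1/2)$ with an appropriate interpolation space between $L^2$ and $\mathcal{D}$. The rest of the argument is a direct application of the general parabolic theory.
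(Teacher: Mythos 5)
Your proof is correct and follows essentially the same route as the paper: verify that Proposition \ref{proplunardi} applies with $\alpha=1/2$, $f\equiv 0$, $u_0=\varphi_\delta$, by showing $\varphi_\delta\in\mathcal{D}$ and $\mathcal{L}_d\varphi_\delta\in\mathcal{D}(1/2)$. The only difference is presentational: where you re-derive the inclusion $H^1\subseteq\mathcal{D}(1/2)$ by a direct spectral estimate in Step 2, the paper simply cites the second part of Lemma \ref{lemPtsmoothing}, which proves exactly that inclusion via the same spectral bound.
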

\begin{proof}
We check that Proposition \ref{proplunardi} applies with $\alpha =1/2$.
For this, we need that $\varphi_{\delta }\in \mathcal{D}$ and that
$\mathcal{L}_{d}\varphi_{\delta }\in \mathcal{D}(1/2)$. Using the
spectral decomposition (\ref{eqspectral}) and the fact that
$\mu_{b}=\text{Leb}([0,1])$ for $b=0$, we see by differentiating under
the sum that $\varphi_{\delta }\in \mathcal{D}$. This is possible by
Lemma \ref{lem bounds on eigenfunctions} and the dominated convergence
theorem. The same argument yields that $\varphi_{\delta }\in H^{3}$.
Thus, we have that $\mathcal{L}_{d}\varphi_{\delta }\in H^{1}$, which
is a subset of $\mathcal{D}(1/2)$ by the second part of Lemma
\ref{lemPtsmoothing}.
\end{proof}

We now recursively define the functions $R_{k}^{\delta }[h]$ and
$v_{k}^{\delta }[h]$, $k\geq 0$. The norm estimates in Section
\ref{sec-reg-est} justify that they are the correct remainder and
approximating terms, respectively, in the $k$-th order Taylor expansion
of $\eta \mapsto u_{b+\eta h}^{\delta }$.

\begin{defin}
Let $b,h\in C^{1}_{0}$ and $\delta >0$.

1. For $k=0$, we define the `$0$-th order local approximation' of
$\eta \mapsto u^{\delta }_{b+\eta h}$ at $0$, and the remainder of this
approximation, by
\begin{equation*}
v_{0}^{\delta }[h]=v_{0}^{\delta }:=u_{b}^{\delta },~~~~ R_{0}^{
\delta }[h]=R_{0}^{\delta }:=u_{b+h}^{\delta }-u_{b}^{\delta }.
\end{equation*}

2. For $k\geq 1$, we recursively define the functions $R_{k}^{\delta
}[h]=R_{k}^{\delta }, v_{k}^{\delta }[h]=v_{k}^{\delta }\in C^{3/2}([0,T],L
^{2})\cap C^{1/2}([0,T],\mathcal{D})$ by
\begin{equation}
\label{rk-vk-def}
R^{\delta }_{k}[h]:= \mathcal{S}\big(L_{h}R_{k-1}^{\delta }[h]\big),
~~~
v_{k}^{\delta }[h]:=R^{\delta }_{k-1}[h]-R_{k}^{\delta }[h],
\end{equation}
where $\mathcal{S}$ is the solution operator defined in (\ref{Sdef}) and
$L_{h}$ was defined in (\ref{eqLh}).
\end{defin}
We should justify why the definition (\ref{rk-vk-def}) is admissible,
and we do so by induction. By Lemma \ref{lemudelta}, we have
$R_{0}^{\delta }[h]\in C^{3/2}([0,T],L^{2})\cap C^{1/2}([0,T],
\mathcal{D})$. Hence, using the definition of $R_{k}^{\delta }[h]$ and
Proposition \ref{proplunardi} inductively, we obtain that for all
$k\ge 1$, $L_{h}R^{\delta }_{k-1}[h]\in C^{1/2}([0,T],H^{1})$ as well
as $L_{h}R^{\delta }_{k-1}[h](0)=0$, so that $R_{k}^{\delta }$,
$v_{k}^{\delta }$ have the stated regularity. Thus, (\ref{rk-vk-def})
is well-defined.

By definition of $\mathcal{L}_{b}$ and (\ref{equdeltapde}), we see that
$R_{0}^{\delta }[h]$ is the unique solution to
\begin{equation}
\label{eqR0PDE}
\mathcal{(}\frac{d}{dt}-\mathcal{L}_{b})R_{0}^{\delta }(t)= L_{h}u
^{\delta }_{b+h}(t)
~~
\forall t\in [0,T]
~~~
\textnormal{and}
~~~
R_{0}^{\delta }(0)=0,
\end{equation}
and (\ref{rk-vk-def}) yields that
\begin{equation}
\label{poly-appr}
u^{\delta }_{b+h}=\sum_{i=0}^{k}v_{i}^{\delta }[h]+R_{k}^{\delta }[h]
~~~~
\forall b,h\in C^{1},
~~
k\ge 0.
\end{equation}
The regularity estimates for $R_{k}^{\delta }[h]$ in the next section
will justify that (\ref{poly-appr}) is in fact the Taylor approximation
for $\eta \mapsto u^{\delta }_{b+\eta h}$. Before proceeding to this,
we need to check that the $v_{k}^{\delta }[h]$ are homogeneous of degree
$k$ in $h$, i.e. that
\begin{equation}
\label{eqhomogenous}
\forall h\in C^{1}_{0}~\forall \eta \in \mathbb{R}:
~~
v_{k}^{\delta }[\eta h]=\eta^{k}v_{k}^{\delta }[h].
\end{equation}
This is again seen by induction. For $k=0$, we have that $v_{0}^{
\delta }[\eta h]=u^{\delta }_{b}=v_{0}^{\delta }[h]$, and if
(\ref{eqhomogenous}) holds for some $k\geq 0$, then we have that
\begin{equation*}
v_{k+1}^{\delta }[\eta h]=\mathcal{S}(L_{\eta h}v_{k}^{\delta }[
\eta h])=\eta^{k+1} \mathcal{S}(L_{h}v_{k}^{\delta }[h]),
\end{equation*}
where we have used that for each $k\in \mathbb{N}\cup \{0\}$,
\begin{equation*}
\big(\frac{d}{dt}-\mathcal{L}_{b}\big)v_{k+1}^{\delta }=L_{h}(R_{k-1}
^{\delta }-R_{k}^{\delta })=L_{h}v_{k}^{\delta }.
\end{equation*}

\subsubsection{Regularity estimates}\label{sec-reg-est}
We now derive norm estimates for the remainders $R_{k}^{\delta }[h]$
from (\ref{rk-vk-def}) and (\ref{poly-appr}), using Propsition
\ref{proplunardi} and the results from Section \ref{secspectral}.

The following Lemma is the main result of Section
\ref{seclocalapprox}. It can be viewed as a regularised version of
Theorem \ref{thmderivative}. Crucially, the estimate below is uniform
in $\delta >0$ such that it can be preserved in the limit $\delta
\to 0$.
\begin{lem}
\label{lem-reg-main}
For each $\epsilon >0$, there exists $C>0$ such that for all
$b\in \Theta $ from (\ref{theta-def}), $h\in C^{1}_{0}$ with
$\|h\|_{H^{1}}\leq 1$, $y\in [0,1]$, $k\in \mathbb{N}\cup \{0\}$ and
$\delta >0$,
\begin{equation*}
\|R_{k}^{\delta }[h](\Delta )\|_{\infty }\leq C^{k}\|h\|_{H^{1}}^{k+1/2-
\epsilon }.
\end{equation*}
\end{lem}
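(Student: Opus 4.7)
The plan is to prove the estimate by induction on $k$. The inductive step uses the maximal regularity estimate of Proposition~\ref{proplunardi} for the solution operator $\mathcal{S}$, while the base case $k = 0$ is the main content and requires careful uniform-in-$\delta$ semigroup smoothing estimates from Section~\ref{secspectral}.

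For the inductive step with inductive hypothesis $\|R_{k-1}^\delta[h]\|_{C^{1/2}([0,T], \mathcal{D})} \leq C^{k-1}\|h\|_{H^1}^{k-1/2-\epsilon}$: since $R_{k-1}^\delta[h](0)=0$, also $L_h R_{k-1}^\delta[h](0)=0$, so the compatibility condition in Proposition~\ref{proplunardi} applied with $\alpha=1/2$ is trivially satisfied, yielding
\begin{equation*}
\|R_k^\delta[h]\|_{C^{1/2}([0,T], \mathcal{D})} \lesssim \|L_h R_{k-1}^\delta[h]\|_{C^{1/2}([0,T], L^2)}.
\end{equation*}
The multiplicative bound $\|h u'\|_{L^2}\le \|h\|_\infty\|u\|_{H^1} \lesssim \|h\|_{H^1}\|u\|_{\mathcal{D}}$ (using $H^1 \hookrightarrow L^\infty$ on $h$ and Lemma~\ref{lemnormequiv}), together with the analogous bound for the Hölder-in-time difference quotient, upgrades this to
\begin{equation*}
\|L_h R_{k-1}^\delta[h]\|_{C^{1/2}([0,T], L^2)} \lesssim \|h\|_{H^1}\, \|R_{k-1}^\delta[h]\|_{C^{1/2}([0,T], \mathcal{D})}.
\end{equation*}
Iterating and then evaluating at $t = \Delta$ via the Sobolev embedding $\mathcal{D} \subset H^2 \hookrightarrow L^\infty$ gives the claimed $L^\infty$ bound.

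For the base case the task is to show $\|R_0^\delta[h]\|_{C^{1/2}([0,T], \mathcal{D})} \lesssim \|h\|_{H^1}^{1/2-\epsilon}$ uniformly in $\delta>0$. Applying Proposition~\ref{proplunardi} to (\ref{eqR0PDE}) directly is obstructed because $\partial_x u_{b+h}^\delta(0) = \partial_x \varphi_\delta$ blows up as $\delta \to 0$. Instead I would work from the Duhamel formula
\begin{equation*}
R_0^\delta[h](t) = \int_0^t P_{t-s, b}\bigl(h\,\partial_x u_{b+h}^\delta(s)\bigr)\,ds
\end{equation*}
and estimate the integrand pointwise in $s$. The key input is that $\varphi_\delta$ is uniformly bounded in $H^{-r}$ for any $r > 1/2$: since $\varphi_\delta \geq 0$ is a probability density, $|\int \varphi_\delta \psi| \leq \|\psi\|_\infty \leq C\|\psi\|_{H^r}$. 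Combining this with spectral smoothing of the form $\|P_{s,b} f\|_{H^\rho} \lesssim s^{-(r+\rho)/2}\|f\|_{H^{-r}}$ (derivable from the spectral decomposition (\ref{eqspectral}) together with Lemma~\ref{lem bounds on eigenfunctions}) yields a $\delta$-uniform bound on $\|u_{b+h}^\delta(s)\|_{H^{1+\rho}}$ with an $s$-integrable singularity, and a second application of smoothing to $P_{t-s,b}$ extracts a factor of $\|h\|_{H^1}$. The weaker exponent $1/2-\epsilon$ (rather than the optimal $1$) arises from having to distribute the regularity budget carefully in order to control the Hölder-in-time norm in $C^{1/2}([0,T],\mathcal{D})$ simultaneously with the $L^\infty$ spatial norm, uniformly in $\delta$; concretely, one interpolates between a trivial $O(1)$ bound (from ultracontractivity) and the $O(\|h\|_{H^1})$ pointwise bound.

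The main obstacle is uniformity in $\delta$. As $\delta\to 0$, $\varphi_\delta$ concentrates to a delta function, so standard parabolic regularity applied to (\ref{equdeltapde}) produces $\delta$-dependent constants that blow up. The resolution is systematic: use the negative Sobolev boundedness of $\varphi_\delta$ (uniform in $\delta$) as input, and let the semigroup smoothing of $P_{t,b}$ (derived uniformly for $b\in \Theta$ in Section~\ref{secspectral}) reclaim positive regularity, paying an integrable-in-$s$ blow-up near $s=0$. A secondary subtlety is that the Hölder-in-time control in $C^{1/2}([0,T],\mathcal{D})$ near $t=0$ needs the same smoothing argument, which is what ultimately forces the small loss $\epsilon>0$ in the exponent.
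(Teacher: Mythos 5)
Your plan to run the induction entirely in the space $C^{1/2}([0,T],\mathcal D)$ and establish the base case $\|R_0^\delta[h]\|_{C^{1/2}([0,T],\mathcal D)}\lesssim \|h\|_{H^1}^{1/2-\epsilon}$ uniformly in $\delta$ has a genuine gap: that $\delta$-uniform estimate is false, essentially because it requires control of $\|R_0^\delta(t)\|_{H^2}$ uniformly down to $t=0$. Carrying out your own Duhamel-plus-smoothing computation precisely: with $\|\varphi_\delta\|_{H^{-r}}\lesssim 1$ for $r>1/2$ and a smoothing estimate of the type $\|P_{s,b}f\|_{H^\beta}\lesssim s^{-(\beta+r)/2}\|f\|_{H^{-r}}$ (note the paper only proves the slightly weaker $t^{-\alpha/2-3/4}$ version in (\ref{Pt-H1H-1}), so your estimate would itself need to be established), one gets $\|h\,\partial_x u_{b+h}^\delta(s)\|_{H^\rho}\lesssim \|h\|_{H^1}s^{-(r+1+\rho)/2}$ and then $\|P_{t-s,b}(\cdot)\|_{H^\gamma}\lesssim (t-s)^{-(\gamma-\rho)/2}\|\cdot\|_{H^\rho}$. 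The two singularities are simultaneously $s$-integrable only for $r+\rho<1$ and $\gamma-\rho<2$, and the resulting beta integral gives $\|R_0^\delta(t)\|_{H^\gamma}\lesssim \|h\|_{H^1}\, t^{(1-\gamma-r)/2}$. Since $r>1/2$, the exponent of $t$ is negative as soon as $\gamma>1-r<1/2$; in particular the sup over $t\in[0,T]$ is only finite for $\gamma<1/2-$, which does not embed into $L^\infty$. So no $t$-uniform $\mathcal D$--valued (or even $H^{1/2+\epsilon}$--valued) bound on $R_0^\delta$ that is uniform in $\delta$ is available, and with it the whole inductive scaffolding collapses. The implicit error in the proposal is the belief that the $\varepsilon$-loss will absorb this singularity at $t=0$; it cannot, because the $\varepsilon$ you are prepared to give up lives in the $\|h\|_{H^1}$ exponent, not in the time variable.

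The paper gets around exactly this obstruction in two places, and both devices are absent from your proposal. First, for the $L^2$ estimate it never works with $R_k^\delta$ directly: it time-integrates and applies Proposition~\ref{proplunardi} to $Q_k^\delta[h](t)=\int_0^t R_k^\delta[h](s)\,ds$, whose source term $L_h\int_0^{\cdot}u^\delta_{b+h}(s)\,ds$ is genuinely $C^{\alpha}([0,T],L^2)$ uniformly in $\delta$ because the time integral effectively applies $\mathcal L_{b+h}^{-1}$ to $\varphi_\delta$, gaining two derivatives (see the display following (\ref{eqf(t)difference})). Differentiating in $t$ then gives a $\delta$-uniform $\|R_k^\delta\|_{C^\alpha([0,T],L^2)}$ bound — but only in $L^2$, not in $\mathcal D$. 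Second, the higher-regularity information is obtained only \emph{at the single fixed time} $t=\Delta$: Lemma~\ref{lemH1estimate} writes $R_k^\delta(\Delta)=\int_0^\Delta P_{\Delta-s,b}L_h R_{k-1}^\delta(s)\,ds$ and splits the integral at carefully chosen points $\Delta_0<\cdots<\Delta_k$, using (\ref{eqH1term1}) (which downgrades to the already-available $L^2$ bound, paying a harmless $(T-\tilde T)^{-5/4}$) on the part near $s=0$, and the $H^1$ bound at step $k-1$ recursively on the part near $s=\Delta$. The terminal $k=0$ case is then the $H^1$-smoothness of $p_{s,b}(\cdot,y)$ for $s\in[\Delta/2,\Delta]$, which \emph{is} uniform in $\delta$ and in $b$ because $s$ is bounded away from $0$ — precisely the fact that your $[0,T]$--uniform formulation discards. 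Finally the paper interpolates the $L^2$ and $H^1$ bounds at $t=\Delta$ to land in $H^{1/2+\epsilon}\hookrightarrow L^\infty$; this is where the $1/2-\epsilon$ in the exponent of $\|h\|_{H^1}$ comes from (the $H^1$ bound has exponent $k$ rather than $k+1$), rather than from a time-regularity tradeoff as you suggest. To salvage your argument you would need to abandon the $C^{1/2}([0,T],\mathcal D)$ target and replicate the paper's two-tier $L^2$/fixed-time-$H^1$ structure.
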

The rest of this section is concerned with proving Lemma
\ref{lem-reg-main}. In what follows, when we write that an inequality
is `uniform' without further comment, or when we use the symbols
$\lesssim , \gtrsim , \simeq $, we mean that the constants involved can
be chosen uniformly over $b,h,y,k$ and $\delta $ as in the statement of
Lemma \ref{lem-reg-main}.

The proof of Lemma \ref{lem-reg-main} consists of two separate lemmas,
which establish an $L^{2}$-estimate (\ref{eqRkreg}) and an
$H^{1}$-estimate (\ref{rk-h1-reg}) for $R_{k}^{\delta }[h](\Delta )$
respectively. Given these two estimates, Lemma \ref{lem-reg-main} then
immediately follows from interpolating -- Indeed, taking $C$ to be the larger
of the two constants from (\ref{eqRkreg}) and (\ref{rk-h1-reg}) yields
\begin{equation*}
\|R_{k}[h](\Delta )\|_{\infty }\lesssim \|R_{k}[h](\Delta )\|_{H^{
\frac{1}{2}+\varepsilon }}\lesssim \|R_{k}(\Delta )\|_{L^{2}}^{
\frac{1}{2}-\varepsilon }\|R_{k}(\Delta )\|_{H^{1}}^{\frac{1}{2}+
\varepsilon }\leq C^{k}\|h\|_{H^{1}}^{k+\frac{1}{2}-\epsilon }.
\end{equation*}

\paragraph{The $L^{2}$-estimate}

To obtain estimates which are uniform in $\delta >0$, we `regularise'
$R_{k}^{\delta }$ further by integrating in time. For $k\geq 0$, define
\begin{equation*}
Q_{k}^{\delta }[h]:[0,T]\to L^{2},\quad Q_{k}^{\delta }[h](t):=\int
_{0}^{t}R_{k}^{\delta }[h](s)ds.
\end{equation*}
Here is the $L^{2}$-estimate.
\begin{lem}
\label{lemregest}
1. Let $b,h\in C^{1}_{0}$, $\delta >0$ and recall the definition
(\ref{Sdef}) of $\mathcal{S}$. Then we have that
\begin{equation}
\label{eqQ0PDE}
Q_{0}^{\delta }[h]= \mathcal{S}\big(L_{h}\int_{0}^{\cdot }u_{b+h}^{
\delta }(s)ds\big),
\end{equation}
and for $k\geq 1$, we have that
\begin{equation}
\label{eqQkPDE}
Q_{k}^{\delta }[h]=\mathcal{S}\big(L_{h}Q_{k-1}^{\delta }[h]\big).
\end{equation}

2. For all $\alpha <1/4$, there exists $C< \infty $ such that for all
$b,h,y,k,\delta $ as in Lemma \ref{lem-reg-main},
\begin{equation}
\label{eqQkreg}
\|Q_{k}^{\delta }[h]\|_{C^{1+\alpha }([0,T],L^{2})}+\|Q_{k}^{\delta }[h]
\|_{C^{\alpha }([0,T],\mathcal{D})}\leq C^{k}\|h\|_{\infty }^{k+1}.
\end{equation}
In particular, we have that
\begin{equation}
\label{eqRkreg}
\|R_{k}^{\delta }[h]\|_{C^{\alpha }([0,T],L^{2})}\leq C^{k}\|h\|_{
\infty }^{k+1}.
\end{equation}

\end{lem}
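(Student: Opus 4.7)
\emph{Part 1 (PDEs for $Q_k^\delta$).} I would derive (\ref{eqQ0PDE}) and (\ref{eqQkPDE}) by integrating the defining equations for $R_k^\delta$ in time. For $k\ge 1$, (\ref{rk-vk-def}) means $R_k^\delta$ solves $(\partial_t - \mathcal{L}_b) R_k^\delta = L_h R_{k-1}^\delta$ with $R_k^\delta(0)=0$. Since $R_k^\delta\in C([0,T],\mathcal{D})$ and $\mathcal{L}_b$ is a closed operator, it commutes with the Bochner integral $\int_0^t(\cdot)\,ds$, so $Q_k^\delta(0)=0$ and
\begin{equation*}
(\partial_t - \mathcal{L}_b)Q_k^\delta(t)=R_k^\delta(t)-\int_0^t \mathcal{L}_b R_k^\delta(s)\,ds=\int_0^t L_h R_{k-1}^\delta(s)\,ds=L_h Q_{k-1}^\delta(t),
\end{equation*}
which is (\ref{eqQkPDE}). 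The same calculation applied to (\ref{eqR0PDE}), with $u_{b+h}^\delta$ in place of $R_{k-1}^\delta$, yields (\ref{eqQ0PDE}).

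\emph{Part 2 (inductive estimate).} I would prove (\ref{eqQkreg}) by induction on $k$, applying Proposition \ref{proplunardi} at each step with $u_0=0$; since $Q_{k-1}^\delta(0)=0$ (and analogously $\int_0^t u^\delta$ vanishes at $t=0$), the source $f$ satisfies $f(0)=0\in\mathcal{D}(\alpha)$, so the compatibility condition is automatic. For $k\ge 1$, the multiplier bound $\|L_h v\|_{L^2}\le \|h\|_\infty\|v'\|_{L^2}\lesssim\|h\|_\infty\|v\|_{\mathcal{D}}$ (using Lemma \ref{lemnormequiv} to compare $\|\cdot\|_{\mathcal{D}}$ and $\|\cdot\|_{H^2}$) gives
\begin{equation*}
\|L_h Q_{k-1}^\delta\|_{C^\alpha([0,T],L^2)}\lesssim \|h\|_\infty\|Q_{k-1}^\delta\|_{C^\alpha([0,T],\mathcal{D})}\le C^{k-1}\|h\|_\infty^{k+1},
\end{equation*}
and Proposition \ref{proplunardi} propagates this through to the left-hand side of (\ref{eqQkreg}) up to a fixed multiplicative constant; after enlarging $C$ once and for all to absorb that constant, the inductive bound $C^k\|h\|_\infty^{k+1}$ is preserved. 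The estimate (\ref{eqRkreg}) is then immediate from $R_k^\delta=\partial_t Q_k^\delta$ and the $C^{1+\alpha}$-part of (\ref{eqQkreg}).

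\emph{Base case and main obstacle.} The heart of the argument lies in the base case $k=0$, where uniformity in $\delta\to 0$ is the real issue: $\varphi_\delta$ becomes singular and its $L^2$-norm blows up. The task reduces, via the multiplier bound, to showing that $\int_0^t u_{b+h}^\delta(s)\,ds$ lies in $H^1$ (equivalently, in $\mathcal{D}$) uniformly in $\delta$, with $C^\alpha$ regularity in $t$. Using the spectral expansion $u_{b+h}^\delta(s)=\sum_j e^{s\lambda_j} c_j(\delta) u_j$ with $c_j(\delta)=\langle\varphi_\delta,u_j\rangle_{L^2(\mu)}$, the time integral produces the factors $(e^{t\lambda_j}-1)/\lambda_j$, whose moduli are dominated by $\min(t,|\lambda_j|^{-1})$. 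Combined with the spectral bounds established in Section \ref{secspectral} (uniform-in-$j$ boundedness of $\|u_j\|_\infty$, Weyl-type asymptotics $|\lambda_j|\asymp j^2$, and $|c_j(\delta)|\lesssim \|u_j\|_\infty$ because $\varphi_\delta$ is a probability density so $\|\varphi_\delta\|_{L^1}=1$), this quenches the singular growth of $\varphi_\delta$ in the relevant Sobolev norms. A careful accounting of the time-H\"older loss produced by the $\min$ truncation gives the exponent $\alpha<1/4$, explaining precisely the restriction stated in the lemma. This step is the main obstacle, as it is the point at which the regularisation $\delta>0$ must be shown to drop out.
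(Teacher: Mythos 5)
Your treatment of Part 1 and of the inductive step $k\ge 1$ of Part 2 tracks the paper closely and is correct: differentiating the Bochner integral, using closedness of $\mathcal{L}_b$, and then propagating the bound via the multiplier inequality $\|L_h v\|_{L^2}\lesssim\|h\|_\infty\|v\|_{\mathcal{D}}$ and Proposition \ref{proplunardi} is exactly the paper's argument.

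The problem is the base case $k=0$, which you correctly identify as the crux but whose proposed resolution relies on a fact the paper does not have. You cite, as a ``spectral bound established in Section \ref{secspectral}'', the uniform-in-$j$ boundedness of $\|u_j\|_\infty$. Lemma \ref{lem bounds on eigenfunctions} gives only $\|u_j\|_\infty\lesssim|\lambda_j|^{1/4+\epsilon}\asymp j^{1/2+\epsilon}$, which is strictly weaker, and this difference is fatal to your direct spectral summation. Writing $c_j(\delta)=\langle\varphi_\delta,u_j\rangle_{L^2(\mu)}$ and using $|c_j(\delta)|\lesssim\|u_j\|_\infty$ together with Lemma \ref{lemnormequiv}, the quantity $\big\|\int_t^{t'}\partial_x u^\delta_{b+h}(s)\,ds\big\|_{L^2}^2$ is controlled by a sum of the form
\begin{equation*}
\sum_{j\ge 1}\frac{1+|\lambda_j|}{|\lambda_j|^2}\,|c_j(\delta)|^2\,|e^{t'\lambda_j}-e^{t\lambda_j}|^2
\;\lesssim\;\sum_{j\ge 1} j^{-1+4\epsilon}\,\min\!\big(1,\,((t'-t)j^2)^2\big),
\end{equation*}
and the tail $\sum_{j>J}j^{-1+4\epsilon}$ already diverges for every fixed $t'-t>0$. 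So with only the bound that the paper actually proves, your sum does not even converge, let alone give a uniform $C^\alpha$ estimate in $\delta$. (Uniform $L^\infty$ boundedness of Sturm--Liouville eigenfunctions with bounded potential is classical, but it is neither stated nor used in this paper, and asserting it as an established ingredient is a genuine gap.)

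The paper sidesteps this entirely with a duality trick that your plan does not mention: after reducing to $\|\mathcal{L}_{b+h}^{-1}(P_{t',b+h}-P_{t,b+h})\varphi_\delta\|_{H^1}\lesssim\|P_{t,b+h}(P_{t'-t,b+h}-\text{Id})\varphi_\delta\|_{H^{-1}}$ via (\ref{ell-norm-est}), it uses the self-adjointness of $P_{t,b+h}$ in $L^2(\mu)$ to move the smoothing onto an $H^1$ test function $\phi$, and then pairs $L^1$ against $L^\infty$:
\begin{equation*}
\sup_{\|\phi\|_{H^1}\le 1}\big|\langle P_{t,b+h}\varphi_\delta,\;P_{t'-t,b+h}\phi-\phi\rangle_{L^2(\mu)}\big|
\;\lesssim\;\sup_{t>0}\|P_{t,b+h}\varphi_\delta\|_{L^1}\cdot\sup_{\|\phi\|_{H^1}\le 1}\|P_{t'-t,b+h}\phi-\phi\|_\infty.
\end{equation*}
Here $\varphi_\delta$ enters only through its $L^1$ norm, which is $1$ uniformly in $\delta$ by Lemma \ref{lemPtsmoothing} (1), and the $(t'-t)^{1/4-\epsilon}$ H\"older loss comes from the $L^\infty$ estimate in (\ref{Pt-H1}) applied to the \emph{test function}, not to $\varphi_\delta$. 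This is where the restriction $\alpha<1/4$ actually originates, and why no pointwise bound on $u_j$ stronger than what Lemma \ref{lem bounds on eigenfunctions} provides is needed. To rescue your direct spectral route you would first have to establish the uniform $L^\infty$ eigenfunction bound as an independent lemma; the paper's $L^1$--$L^\infty$ duality argument is precisely what makes that unnecessary.
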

\begin{proof}
We first show (\ref{eqQ0PDE}). Using Riemann sums to approximate the
integrals below, the closedness of the operators $\mathcal{L}_{b}$ and
$L_{h}$ as well as (\ref{eqR0PDE}), we obtain that
\begin{equation}
\label{eqQ0PDE2}
\begin{split}
\big(\frac{d}{dt}-\mathcal{L}_{b}\big)Q_{0}^{\delta }
&=R_{0}^{\delta
}(t)-\int_{0}^{t}\mathcal{L}_{b}R_{0}^{\delta }(s)ds=R_{0}^{\delta }(t)-R
_{0}^{\delta }(0)-\int_{0}^{t}\mathcal{L}_{b}R_{0}^{\delta }(s)ds
\\
&=\int_{0}^{t}\big(\frac{d}{ds}-\mathcal{L}_{b}\big)R_{0}^{\delta }(s)ds=L
_{h}\int_{0}^{t}u^{\delta }_{b+h}(s)ds.
\end{split}
\end{equation}
Moreover, we have $Q_{0}^{\delta }(0)=0$ and $Q_{0}^{\delta }\in C
^{3/2}([0,T],L^{2})\cap C^{1/2}([0,T],\mathcal{D})$, so that
(\ref{eqQ0PDE}) follows from Proposition \ref{proplunardi}. For
$k\ge 1$, (\ref{eqQkPDE}) is proved in the same manner.

Next, we prove (\ref{eqQkreg}) for $k=0$. Let $\alpha <1/4$,
$\delta >0$, $b\in \Theta $, $\|h\|\in C^{1}_{0}$ with $\|h\|_{H^{1}}
\leq 1$, and let us write
\begin{equation*}
f(t)=\partial_{x}\Big(\int_{0}^{t}u^{\delta }_{b+h}(s)ds\Big).
\end{equation*}
In view of (\ref{eqQ0PDE}) and Proposition \ref{proplunardi}, and noting
that $hf(0)=0$, it suffices to show that $\|f\|_{C^{\alpha }([0,T],L
^{2})}\leq C$ for some uniform constant $C$. For all $t<t'\in [0,T]$,
we have by the definition of $u_{b+h}^{\delta }$ and Fubini's theorem
that
\begin{equation*}
\begin{split}
\left[ f(t')-f(t)\right] (x)=
&\partial_{x}\int_{t}^{t'}\int_{0}^{1}p
_{s,b+h}(x,z)\varphi_{\delta }(z)dzds
\\
=
&\partial_{x}\int_{0}^{1}\Big(\int_{t}^{t'}p_{s,b+h}(x,z)ds \Big)
\varphi_{\delta }(z)dz.
\end{split}
\end{equation*}
For convenience, let us for now write $\mu $ for $\mu_{b+h}$ and
$(\lambda_{j},u_{j})_{j\geq 0}$ for the eigenpairs of $\mathcal{L}
_{b+h}$. Using the spectral decomposition (\ref{eqspectral}) with
$b+h$ in place of $b$ and Fubini's theorem, integrating each summand
separately yields that
\begin{equation}
\label{eqf(t)difference}
\begin{split}
\left[ f(t')-f(t)\right] (x)
&=(t'-t)\partial_{x}\int_{0}^{1}
\varphi_{\delta }(z)\mu (z)dz
\\
&
\qquad
+\partial_{x}\int_{0}^{1} \sum_{j\geq 1} \frac{1}{\lambda_{j}}(e^{t'
\lambda_{j}}-e^{t\lambda_{j}})u_{j}(x) u_{j}(z)\varphi_{\delta }(z)
\mu (z)dz
\\
&=\partial_{x}\sum_{j\geq 1} \frac{1}{\lambda_{j}}(e^{t'\lambda_{j}}-e
^{t\lambda_{j}})u_{j}(x) \langle u_{j},\varphi_{\delta }
\rangle_{L^{2}(\mu )},
\end{split}
\end{equation}
where Fubini's theorem is applicable due to Lemma
\ref{lem bounds on eigenfunctions}:
\begin{equation*}
\begin{split}
\sum_{j\geq 1} \Big|\frac{1}{\lambda_{j}}(e^{t'\lambda_{j}}-e^{t
\lambda_{j}})u_{j}(x) \langle u_{j},\varphi_{\delta }\rangle_{L^{2}(
\mu )}\Big| \lesssim \|\mu \varphi_{\delta }\|_{L^{2}}\sum_{j\geq 1}j
^{-2}\|u_{j}\|_{\infty }\lesssim \sum_{j\geq 1}j^{-3/2+\varepsilon }.
\end{split}
\end{equation*}
From Lemma \ref{lemneumannregularity} and (\ref{eqf(t)difference}), it
follows that
\begin{equation*}
f(t')-f(t)=\partial_{x} \left(  \mathcal{L}_{b+h}^{-1}\left( P_{t',b+h}-P
_{t,b+h}\right) \varphi_{\delta }\right) .
\end{equation*}
Using this, (\ref{ell-norm-est}), the self-adjointness of $P_{t,b+h}$
with respect to $\langle \cdot ,\cdot \rangle_{L^{2}(\mu )}$ and
(\ref{Pt-H1}), we obtain that
\begin{equation*}
\begin{split}
\|f(t')-f(t)\|_{L^{2}}
&\leq \| \mathcal{L}_{b+h}^{-1}\left( P_{t',b+h}-P
_{t,b+h}\right) \varphi_{\delta }\|_{H^{1}}
\\
&\lesssim \|P_{t,b+h}(P_{t'-t,b+h}-Id)\varphi_{\delta }\|_{H^{-1}}
\\
&\lesssim \sup_{\phi \in H^{1}, \|\phi \|_{H^{1}}\leq 1}\left| \langle
(P_{t'-t,b+h}-Id)P_{t,b+h}\varphi_{\delta },\phi \rangle_{L^{2}(
\mu )}\right|
\\
&=\sup_{\phi \in H^{1}, \|\phi \|_{H^{1}}\leq 1}\left| \langle P_{t,b+h}
\varphi_{\delta },P_{t'-t,b+h}\phi -\phi \rangle_{L^{2}(\mu )}\right|
\\
&\lesssim \sup_{t>0}\|P_{t,b+h}\varphi_{\delta }\|_{L^{1}}
\sup_{\phi \in H^{1}, \|\phi \|_{H^{1}}\leq 1}\left\| P_{t'-t,b+h}
\phi -\phi \right\| _{\infty }
\\
&\lesssim \sup_{\phi \in H^{1}, \|\phi \|_{H^{1}}\leq 1}\|P_{t'-t,b+h}
\phi -\phi \|_{\infty }
\\
&\lesssim (t'-t)^{\alpha }.
\end{split}
\end{equation*}
Hence, Proposition \ref{proplunardi} and (\ref{eqQ0PDE}) imply
(\ref{eqQkreg}) for $k=0$. Choosing $C$ large enough and inductively
using Proposition \ref{proplunardi} and (\ref{eqQkPDE}), we also obtain
(\ref{eqQkreg}) for $k\geq 1$:
\begin{equation*}
\begin{split}
\|Q_{k}^{\delta }\|_{C^{\alpha }([0,T],\mathcal{D})}
&+\|Q_{k}^{\delta
}\|_{C^{1+\alpha }([0,T],L^{2})}\le C\|L_{h}Q_{k-1}^{\delta }\|_{C
^{\alpha }([0,T],L^{2})}
\\
&\le C\|h\|_{\infty }\|Q_{k-1}^{\delta }\|_{C^{\alpha }([0,T],
\mathcal{D})}\leq C^{k} \|h\|_{\infty }^{k+1}.
\end{split}
\end{equation*}
Finally, (\ref{eqRkreg}) follows upon differentiating (\ref{eqQkreg})
in $t$.
\end{proof}

\paragraph{The $H^{1}$ estimate}

The $H^{1}$-estimate reads as follows.
\begin{lem}
\label{lemH1estimate}
Let $k\geq 0$ be an integer and $\Delta >0$. Then there exists
$C<\infty $ such that for all $b,h,y,k,\delta $ as in Lemma
\ref{lem-reg-main},
\begin{equation}
\label{rk-h1-reg}
\|R_{k}^{\delta }(\Delta )\|_{H^{1}}\leq C^{k}\|h\|_{H^{1}}^{k}.
\end{equation}
\end{lem}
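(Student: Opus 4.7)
The plan is to prove (\ref{rk-h1-reg}) by induction on $k$, based on the Duhamel representation
\[
R_k^\delta[h](\Delta) \;=\; \int_0^\Delta P_{\Delta-s,b}\bigl(h\,\partial_x R_{k-1}^\delta[h](s)\bigr)\,ds, \qquad k\ge 1,
\]
which follows from (\ref{rk-vk-def}) and (\ref{eqduhamel}). Two forms of semigroup smoothing, both derivable from the spectral decomposition together with the eigenpair bounds of Lemma \ref{lem bounds on eigenfunctions}, will be the main analytic tools: $\|P_{\tau,b}\|_{L^2\to H^1}\lesssim \tau^{-1/2}$ and, dually, $\|P_{\tau,b}\|_{H^{-1}\to L^2}\lesssim \tau^{-1/2}$, both uniformly in $b\in\Theta$ for $\tau\in(0,\Delta]$. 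Composing them yields $\|P_{\tau,b}\|_{H^{-1}\to H^1}\lesssim 1$ uniformly on any subinterval $\tau\in[\Delta/2,\Delta]$.

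For the base case $k=0$, the plan is to write $R_0^\delta(\Delta)=P_{\Delta,b+h}\varphi_\delta-P_{\Delta,b}\varphi_\delta$ and factor each $P_{\Delta,d}$ as $P_{\Delta/2,d}\circ P_{\Delta/2,d}$. The uniform pointwise bounds on transition densities from Proposition 9 of \cite{ns} give $\|P_{\Delta/2,d}\varphi_\delta\|_\infty\le \|p_{\Delta/2,d}\|_\infty\|\varphi_\delta\|_{L^1}\lesssim 1$ uniformly in $\delta$, and a second application of $P_{\Delta/2,d}$, viewed as a bounded map $L^2\to H^1$, then gives $\|R_0^\delta(\Delta)\|_{H^1}\lesssim 1$ uniformly.

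For the inductive step, the idea is to split the Duhamel integral at $\Delta/2$. On $s\in[0,\Delta/2]$, where $\Delta-s\ge\Delta/2$, I would use the identity $h\,\partial_x R_{k-1}=\partial_x(hR_{k-1})-h'R_{k-1}$ together with the one-dimensional embedding $L^1\hookrightarrow H^{-1}$ (a dual consequence of $H^1\hookrightarrow L^\infty$) to bound $\|h\,\partial_x R_{k-1}(s)\|_{H^{-1}}\lesssim \|h\|_{H^1}\|R_{k-1}(s)\|_{L^2}$, combining this with $\|P_{\Delta-s,b}\|_{H^{-1}\to H^1}\lesssim 1$ and the uniform-in-$\delta$ $L^2$ bound on $R_{k-1}$ supplied by Lemma \ref{lemregest}. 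On $s\in[\Delta/2,\Delta]$, I would instead use the weaker estimate $\|P_{\Delta-s,b}\|_{L^2\to H^1}\lesssim(\Delta-s)^{-1/2}$ (whose integral is finite) together with $\|h\,\partial_x R_{k-1}(s)\|_{L^2}\le \|h\|_\infty\|R_{k-1}(s)\|_{H^1}$; the pointwise-in-time $H^1$ bound on $R_{k-1}(s)$ needed here, uniformly for $s\ge\Delta/2$ and $\delta>0$, would be obtained by an outer induction controlling $\sup_{s\ge\tau}\|R_j^\delta(s)\|_{H^1}$ for each fixed $\tau>0$, iterating the same split-at-$s/2$ argument with base-case bound again provided by semigroup smoothing at the positive time $\tau/2$.

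The main obstacle is the concentration singularity of $\varphi_\delta$ as $\delta\to 0$: since $\|u_b^\delta(s)\|_{H^1}$ blows up like $s^{-1}$ near $s=0$, no pointwise-in-time $H^1$ bound on $R_k^\delta(s)$ can be uniform in $\delta$ across all of $[0,\Delta]$, and a single application of $\|P_{\Delta-s,b}\|_{L^2\to H^1}\lesssim(\Delta-s)^{-1/2}$ over the whole interval therefore fails. The time split above resolves this by reserving the weaker $L^2\to H^1$ smoothing for times bounded away from the singularity, while calling on the stronger $H^{-1}\to H^1$ smoothing precisely where the source $h\partial_x R_{k-1}$ is too singular to be controlled in $L^2$ but remains controllable in $H^{-1}$.
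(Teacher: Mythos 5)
Your proposal is correct and follows the same core strategy as the paper's proof: split the Duhamel integral for $R_k^\delta$ at an interior time, control the far part via $H^{-1}\to H^1$ semigroup smoothing together with an integration by parts (using $h(0)=h(1)=0$) and the $\delta$-uniform $L^2$ bound on $R_{k-1}$ from Lemma \ref{lemregest}, control the near part via $L^2\to H^1$ smoothing together with an $H^1$ bound on $R_{k-1}$ at times bounded away from $0$, and recurse down to $R_0$, whose $H^1$ norm at positive times is $\delta$-uniformly bounded. These are precisely the estimates packaged in the paper's (\ref{eqH1term1})--(\ref{eqH1term2}). The only substantive difference is in the choice of split points during the recursion: you halve the remaining time at each level, so the time thresholds descend dyadically to $\sim\Delta/2^k$ and your far-part smoothing constants grow geometrically in $k$; the paper instead uses the arithmetic mesh $\Delta_j=\Delta(1+j/k)/2$, which keeps every split inside $[\Delta/2,\Delta]$ with gap $\eta_k=\Delta/(2k)$, so the singular factor $\eta_k^{-5/4}$ grows only polynomially in $k$. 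Both are compatible with the stated $C^k$ bound, but if you adopt the dyadic descent you should carry the accumulated $2^{ck}$ factors through the recursion explicitly to confirm they stay geometric, which the paper's arithmetic mesh avoids by keeping each far-window of length $\eta_k$ rather than shrinking it.
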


To prove Lemma \ref{lemH1estimate}, we express $R_{k}^{\delta }[h]$
using (\ref{eqduhamel}) and decompose the integral into times close to
$0$ and times bounded away from $0$. The following Lemma allows us to
control the respective integrals.

\begin{lem}
For any $T>0$, there exists $C<\infty $ such that for
all $b,h,y,k,\delta $ as in Lemma \ref{lem-reg-main} and all $\tilde{T}\in (0,T)$, we have the estimates
\begin{align}
\label{eqH1term1}
\Big \| \int_{0}^{\tilde{T}}P_{T-s}L_{h}R_{k}^{\delta }(s)ds \Big \|
_{H^{1}} &\leq \frac{C}{(T-\tilde{T})^{5/4}}\|h\|_{H^{1}}
\sup_{s\in [0,\tilde{T}]}\|R_{k}^{\delta }(s)\|_{L^{2}},\\
\label{eqH1term2}
\Big \| \int_{\tilde{T}}^{T}P_{T-s}L_{h}R_{k}^{\delta }(s)ds \Big \|
_{H^{1}} &\leq C\|h\|_{\infty }\sup_{s\in [\tilde{T},T]}\|R_{k}^{
\delta }(s)\|_{H^{1}}.
\end{align}
\end{lem}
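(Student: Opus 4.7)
The plan is to derive both estimates from the parabolic smoothing properties of the semigroup $(P_{t,b}:t\ge 0)$, which via the spectral decomposition (\ref{Pt-spectral}) and the eigenvalue growth $\lambda_j\sim j^2$ (established in Section \ref{secspectral}) yield bounds of the form $\|P_{t,b}\|_{H^{r}\to H^{r'}} \lesssim t^{-(r'-r)/2}$ for $r\le r'$. The second estimate (\ref{eqH1term2}) will follow essentially directly from the $L^2\to H^1$ smoothing; the first estimate (\ref{eqH1term1}) requires a duality argument to trade the loss of a derivative in $L_h$ for an extra factor of smoothing from $P_{T-s,b}$.

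For (\ref{eqH1term2}), since $L_h R_k^{\delta}(s) = h\,\partial_x R_k^{\delta}(s)$, one immediately has $\|L_h R_k^{\delta}(s)\|_{L^2}\le \|h\|_{\infty}\|R_k^{\delta}(s)\|_{H^1}$. Applying the smoothing estimate $\|P_{T-s,b}\|_{L^2\to H^1}\lesssim (T-s)^{-1/2}$ and integrating in $s\in[\tilde T,T]$ via $\int_{\tilde T}^T (T-s)^{-1/2}\,ds = 2\sqrt{T-\tilde T}\le 2\sqrt T$ yields the claim.

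For (\ref{eqH1term1}) the right-hand side only controls $\|R_k^{\delta}(s)\|_{L^2}$, so one must convert the loss of a derivative in $L_h$ into additional time-smoothing from $P_{T-s,b}$. Crucially using the boundary conditions $h(0)=h(1)=0$ coming from $h\in C^1_0$, integration by parts gives, for any $\phi\in H^1$,
\begin{equation*}
\langle L_h R_k^{\delta}(s),\phi\rangle_{L^2} \;=\; -\langle R_k^{\delta}(s),\, h'\phi + h\phi'\rangle_{L^2},
\end{equation*}
with no boundary contribution. The 1D Sobolev embedding $H^1\hookrightarrow L^{\infty}$ from (\ref{eqsobolevemb}) then gives $\|h'\phi + h\phi'\|_{L^2}\lesssim \|h\|_{H^1}\|\phi\|_{H^1}$, and taking the supremum over $\phi$ produces the dual bound
\begin{equation*}
\|L_h R_k^{\delta}(s)\|_{H^{-1}}\;\le\; C\,\|h\|_{H^1}\,\|R_k^{\delta}(s)\|_{L^2}.
\end{equation*}
Combining with the smoothing estimate $\|P_{T-s,b}\|_{H^{-1}\to H^1}\lesssim (T-s)^{-1}$ and the uniform bound $(T-s)^{-1}\le (T-\tilde T)^{-1}$ for $s\in[0,\tilde T]$, integration over $s\in[0,\tilde T]$ produces an overall factor $\tilde T\cdot (T-\tilde T)^{-1}\le T (T-\tilde T)^{-1}$. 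Since $T$ is fixed and $T-\tilde T\le T$, this quantity is bounded by a $T$-dependent multiple of $(T-\tilde T)^{-5/4}$, yielding (\ref{eqH1term1}).

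The main obstacle is the dual estimate on $L_h$: it pivotally uses $h(0)=h(1)=0$ to eliminate boundary terms in the integration by parts; without this property one would only have control of $L_h R_k^{\delta}(s)$ in $(H^1_0)^*$, which is insufficient to pair with the $H^{-1}\to H^1$ smoothing of $P_{t,b}$. A minor secondary point is verifying that this smoothing estimate, whose natural proof through the spectral decomposition uses the $L^2(\mu_b)$-inner product, remains valid when $H^{-1}$ is defined via the $L^2$-pairing as in the duality argument above; this follows at once from the two-sided bound (\ref{eqinvmeasbdd}) on $\mu_b$.
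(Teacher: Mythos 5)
Your proof is correct and follows essentially the same route as the paper: the dual $H^{-1}$ bound on $L_h R_k^\delta(s)$ obtained by integrating by parts (using $h(0)=h(1)=0$), combined with $H^{-1}\to H^1$ parabolic smoothing, gives (\ref{eqH1term1}); and $L^2\to H^1$ smoothing gives (\ref{eqH1term2}). Two small remarks. First, the $L^2\to H^1$ smoothing bound $\|P_{t,b}\|_{L^2\to H^1}\lesssim 1+t^{-1/2}$ that you invoke for (\ref{eqH1term2}) is not among the estimates recorded in Lemma \ref{lemPtsmoothing}; the paper instead carries out the spectral computation $\sum_j\big(\int_{\tilde T}^T |\lambda_j|^{1/2}e^{\lambda_j(T-s)}\cdots ds\big)^2\lesssim\sum_j j^{-2}$ directly. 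Your version follows at once from the domain characterisation in Lemma \ref{lemnormequiv} (via $\sup_j(1+|\lambda_j|)e^{2\lambda_j t}\lesssim 1+t^{-1}$), so this is only a cosmetic gap, but it is worth stating it explicitly rather than treating it as a black box. Second, you use the sharper rate $\|P_t\|_{H^{-1}\to H^1}\lesssim 1+t^{-1}$, whereas the paper's recorded estimate (\ref{Pt-H1H-1}) (whose proof loses a $1/4$ through Cauchy--Schwarz) gives $1+t^{-5/4}$; both suffice since you in any case absorb the improved power into $(T-\tilde T)^{-5/4}$, so you could simply cite (\ref{Pt-H1H-1}) rather than re-derive a sharper version.
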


\begin{proof}
We first show (\ref{eqH1term2}). By Lemma \ref{lemnormequiv}, we can
estimate the $(-\mathcal{L}_{b})^{1/2}$-graph norm instead of the
$H^{1}$ norm. Using Lemma \ref{lem bounds on eigenfunctions}, we have
\begin{equation*}
\begin{split}
&\Big \| (-\mathcal{L}_{b})^{1/2}\int_{\tilde{T}}^{T}P_{T-s}L_{h}R
_{k}^{\delta }(s)ds \Big \|_{L^{2}(\mu )}^{2}
\\
&
\qquad
=\sum_{j=1}^{\infty }\Big(\int_{\tilde{T}}^{T} |\lambda_{j}|^{
\frac{1}{2}}e^{\lambda_{j}(T-s)}\langle u_{j},hR_{k}^{\delta }(s)'
\rangle_{L^{2}(\mu )}ds\Big)^{2}
\\
&
\qquad
\lesssim \sum_{j=1}^{\infty }\Big(\int_{\tilde{T}}^{T}je^{-cj^{2}(T-s)}
\|hR_{k}^{\delta }(s)'\|_{L^{2}}ds\Big)^{2}
\\
&
\qquad
\lesssim \|h\|_{\infty }^{2}\sup_{s\in [\tilde{T},T]}\|R_{k}^{\delta
}(s)\|_{H^{1}}^{2} \sum_{j=1}^{\infty }\Big(j\int_{\tilde{T}}^{T}e
^{-cj^{2}(T-s)}ds\Big)^{2}
\\
&
\qquad
\lesssim \|h\|^{2}_{\infty }\sup_{s\in [\tilde{T},T]}\|R_{k}^{\delta
}(s)\|_{H^{1}}^{2}\sum_{j=1}^{\infty }\frac{1}{j^{2}}.
\end{split}
\end{equation*}
A similar calculation yields that
\begin{equation*}
\begin{split}
\Big \|\int_{\tilde{T}}^{T}P_{T-s}L_{h}R_{k}^{\delta }(s)ds \Big \|
_{L^{2}(\mu )}^{2}\lesssim \|h\|^{2}_{\infty }\sup_{s\in [\tilde{T},T]}
\|R_{k}^{\delta }(s)\|_{H^{1}}^{2} \Big(T^{2}+\sum_{j=1}^{\infty }\frac{1}{j
^{4}}\Big).
\end{split}
\end{equation*}
Combining the last two displays completes the proof of
(\ref{eqH1term2}).

Next, we prove (\ref{eqH1term1}). Using (\ref{Pt-H1H-1}) with
$\alpha =1$, the boundary condition $h(0)=h(1)=0$ to integrate by parts
and (\ref{eqsobolevinter}), we obtain
\begin{equation*}
\everymath{\displaystyle}
\begin{array}[b]{r@{\;}l}
\Big \|\int_{0}^{\tilde{T}}
&P_{T-s}L_{h}R_{k}^{\delta }(s)ds\Big \|
_{H^{1}}\lesssim \int_{0}^{\tilde{T}}(T-s)^{-\frac{5}{4}}\left\| L_{h}R
_{k}^{\delta }(s)\right\| _{H^{-1}}ds
\\\noalign{\vspace{4pt}}
&\le (T-\tilde{T})^{-\frac{5}{4}}\int_{0}^{\tilde{T}}
\sup_{\psi \in C^{\infty },\|\psi \|_{H^{1}}\leq 1}\big| \int_{0}^{1}
(\psi h)' R_{k}^{\delta }(s)\big|ds
\\\noalign{\vspace{4pt}}
&\lesssim (T-\tilde{T})^{-\frac{5}{4}}\|h\|_{H^{1}}
\sup_{s\in [0, \tilde{T}]}\left\| R_{k}^{\delta }(s)\right\| _{L^{2}}.
\end{array}             \qedhere
\end{equation*}
\end{proof}

\begin{proof}[Proof of Lemma \ref{lemH1estimate}]
The case $k=0$ follows from Lemma
\ref{lem bounds on transition densities}. For $k\geq 1$, we iteratively
apply the estimates (\ref{eqH1term1}) and (\ref{eqH1term2}). We first
define the points $\Delta_{j}$ at which we will split the integrals
involved below:
\begin{equation*}
\Delta_{j}:= \Delta \frac{1+j/k}{2},~~ j=0,...,k, ~~~ \text{and}~~~
\eta_{k}:=\frac{\Delta }{2k}=\Delta_{k}-\Delta_{k-1}.
\end{equation*}
Then, using (\ref{rk-vk-def}) and (\ref{eqduhamel}), we can estimate
\begin{equation*}
\begin{split}
\|R_{k}^{\delta }(\Delta )\|_{H^{1}}
&\leq \Big \|\int_{0}^{\Delta_{k-1}}P
_{\Delta -s}L_{h}R_{k-1}^{\delta }(s)ds \Big \|_{H^{1}}+ \Big \|
\int_{\Delta_{k-1}}^{\Delta }P_{\Delta -s}L_{h}R_{k-1}^{\delta }(s)ds
\Big \|_{H^{1}}
\\
&=: I+II.
\end{split}
\end{equation*}
Now let $C$ be the largest of the constants from (\ref{eqRkreg}),
(\ref{eqH1term1}) and (\ref{eqH1term2}). From (\ref{eqH1term1}) with
$\tilde{T}=\Delta_{k-1}$ and (\ref{eqRkreg}), we obtain
\begin{equation*}
I \leq C\eta_{k}^{-\frac{5}{4}} \|h\|_{H^{1}}\sup_{s\in [0,\Delta_{k-1}]}
\left\| R^{\delta }_{k-1}(s)\right\| _{L^{2}}\leq C^{k}\eta_{k}^{-
\frac{5}{4}}\|h\|_{H^{1}}^{k+1}.
\end{equation*}
For the second term, we apply (\ref{eqH1term2}) to obtain
\begin{equation*}
II\leq C\|h\|_{\infty }\sup_{s\in [\Delta_{k-1},\Delta ]}\|R_{k-1}
^{\delta }(s)\|_{H^{1}}.
\end{equation*}
To further estimate the right hand side, we can repeat the argument for
any $s\in [\Delta_{k-1},\Delta ]$:
\begin{equation*}
\begin{split}
\|R_{k-1}^{\delta }(s)\|_{H^{1}}
&\leq \Big \|\int_{0}^{\Delta_{k-2}}\! P
_{\Delta -s}L_{h}R_{k-2}^{\delta }(s)ds \Big \|_{H^{1}}\! +\!  \Big \|
\int_{\Delta_{k-2}}^{s}\!  P_{\Delta -u}L_{h}R_{k-2}^{\delta }(u)du
\Big \|_{H^{1}}
\\
&\le C^{k}\eta_{k}^{-\frac{5}{4}}\|h\|_{H^{1}}^{k}+C\|h\|_{\infty }
\sup_{s\in [\Delta_{k-2},\Delta ]}\|R_{k-2}^{\delta }(s)\|_{H^{1}}.
\end{split}
\end{equation*}
By iterating this argument $k$ times, we obtain that for some larger
constant $\tilde{C}$ independent of $k$,
\begin{equation*}
\|R_{k}^{\delta }(\Delta )\|_{H^{1}}\le kC^{k}\big(\frac{2k}{\Delta }
\big)^{\frac{5}{4}}\|h\|_{H^{1}}^{k+1}+C^{k}\|h\|_{\infty }^{k}
\sup_{s\in [\Delta /2,\Delta ]}\|R_{0}^{\delta }(s)\|_{H^{1}}\le
\tilde{C}^{k}\|h\|_{H^{1}}^{k},
\end{equation*}
where we used (\ref{eq boundedness in H^s}) in the last step. This
completes the proof.
\end{proof}

\subsection{Proof of Theorem \ref{thmderivative}}\label{subsecthm1pf}
We now prove Theorem \ref{thmderivative} by letting $\delta >0$ in Lemma
\ref{lem-reg-main} tend to $0$. Let us fix $b\in \Theta $, $h\in C
^{1}_{0}$ with $\|h\|_{H^{1}}\leq 1$ and $x,y\in [0,1]$, and recall the
notation $\Phi (\eta ):=p_{\Delta , b+\eta h}(x,y)$ for $\eta \in
\mathbb{R}$. For notational convenience, for any $\delta >0, \eta
\in \mathbb{R}$ and integer $k\ge 0$, define
\begin{equation*}
\begin{split}
\Phi^{\delta }(\eta ):=u^{\delta }_{b+\eta h}(\Delta ,x),
~~~
a_{k}^{\delta }:=v_{k}^{\delta }[h](\Delta ,x),
~~~
p_{k}^{\delta }(\eta ):=\sum_{i=0}^{k}a_{i}^{\delta }\eta^{i}.
\end{split}
\end{equation*}
Then by Lemma \ref{lem-reg-main} and (\ref{eqhomogenous}), there exists
$C<\infty $ such that for all $\delta >0$, $k\ge 0$ and
$\eta \in [-1,1]$,
\begin{equation}
\label{eqremainder}
\left| \Phi^{\delta }(\eta )-p_{k}^{\delta }(\eta )\right| =\left| R_{k}
^{\delta }[\eta h](\Delta ,x)\right| \leq \|R_{k}^{\delta }[\eta h](
\Delta )\|_{\infty }\leq C^{k}|\eta |^{k+1/4}.
\end{equation}
Hence for all $\delta >0$, on the interval $\eta \in [-\frac{1}{2C},
\frac{1}{2C}]\cap [-1,1]$, $\Phi^{\delta }$ is given by the power series
$\Phi^{\delta }(\eta )=\sum_{i=0}^{\infty }a_{i}^{\delta }\eta^{i}$. We
divide the rest of the proof into three steps. The first two steps imply
an analogous power series for $\Phi $, and the third proves the integral
formula (\ref{eqderivative}).

\textit{1. Convergence of $\Phi^{\delta }(\eta )$}. Note that by the
definition of $u^{\delta }_{b+\eta h}$, we have that
\begin{equation*}
\forall \eta \in \mathbb{R}:~~\big|\Phi^{\delta }(\eta )-\Phi (\eta )
\big|=\big| P_{\delta ,0}\big(p_{\Delta ,b+\eta h}(x,\cdot )\big)(y)-p
_{\Delta ,b+\eta h}(x,y)\big|.
\end{equation*}
Moreover, by (\ref{eq boundedness in H^s}) we have for any $R>0$ that
\begin{equation}
\label{H1bound}
\sup_{x\in [0,1],\lVert d\rVert_{H^{1}}\leq R}\lVert p_{\Delta ,d}(x,
\cdot )\rVert_{H^{1}}<\infty .
\end{equation}
Thus, using (\ref{Pt-H1}), it follows that for any $\alpha <1/4$, there
is $c<\infty $ such that for all $b\in \Theta , h\in C^{1}_{0}$ with
$\|h\|_{H^{1}}\leq 1$ and $|\eta |\leq 1$,
\begin{equation}
\label{pdeltaconv}
\begin{split}
&\left| P_{\delta ,0}\big(p_{\Delta ,b+\eta h}(x,\cdot )\big)(y)-p_{
\Delta ,b+\eta h}(x,y) \right|
\\
&
\qquad
\leq \sup_{x\in [0,1], \lVert d\rVert_{H^{1}}\leq B+1}\lVert P_{
\delta ,0}p_{\Delta ,d}(x,\cdot )-p_{\Delta ,d}(x,\cdot )\rVert_{
\infty }\leq c\delta^{\alpha } \xrightarrow{\delta \to 0}0.
\end{split}
\end{equation}

\textit{2. Convergence of $a^{\delta }_{k}$}. Fix some $\eta \neq 0$ and
some sequence $\delta_{n}>0$ tending to $0$ as $n\to \infty $. Using
(\ref{eqremainder}), it is easily seen inductively that for all
$k\geq 0$, the sequence $(a^{\delta_{n}}_{k}:n\in \mathbb{N})$ is
bounded. Hence, by a diagonal argument there exists a subsequence
$(\delta_{n_{l}}:l\in \mathbb{N})$ and some sequence $a_{k}\in
\mathbb{R}$ such that for all $k$, $a^{\delta_{n_{l}}}_{k}\xrightarrow{l
\to \infty }a_{k}$. Defining the polynomials
\begin{equation}
\label{pk-def}
p_{k}(\eta ):=\sum_{i=0}^{k}a_{i} \eta^{i},
~~~~
\eta \in \mathbb{R},
~~
k=0,1,2,...,
\end{equation}
we see that (\ref{eqremainder}) still holds with $\Phi $ and
$p_{k}$ in place of $\Phi^{\delta }$ and $p_{k}^{\delta }$. Hence,
$\Phi $ is analytic and $\Phi (\eta )=\sum_{k=0}^{\infty }a_{i}\eta
^{i}$ holds for $\eta \in \left[ -\frac{1}{2C},\frac{1}{2C}\right]
\cap [-1,1]$.

\textit{3. Proof of (\ref{eqderivative})}. It remains to show the
integral formula (\ref{eqderivative}) for $\Phi '(0)$. By what precedes,
we know that the constants $a_{0},a_{1}$ from (\ref{pk-def}) satisfy
\begin{equation*}
\forall \eta \in [-1,1]: |\Phi (\eta )-a_{0}-\eta a_{1}|\leq C|\eta |^{5/4},~~~
\Phi (0)=a_{0}, ~~~\Phi '(0)=a_{1}=\lim_{\delta \to 0}a^{\delta }_{1}.
\end{equation*}
Moreover, by definition of $v^{\delta }_{1}[h]$, we have for all
$\delta >0$ that
\begin{equation*}
a_{1}^{\delta }=v_{1}^{\delta }[h](\Delta ,x)=\mathcal{S}(L_{h}u_{b}
^{\delta })(\Delta ,x)=\int_{0}^{\Delta }\left[ P_{\Delta -s,b}L_{h}u
_{b}^{\delta }(s)\right] (x)ds.
\end{equation*}
Therefore, (\ref{eqderivative}) is proven if we can show that the
following expression converges to $0$ as $\delta \to 0$ (recall that
$\varphi_{\delta }$ was defined in (\ref{eq-phi-delta})):
\begin{equation*}
\begin{split}
&\int_{0}^{\Delta }\big[P_{\Delta -s,b}L_{h}P_{s,b}\varphi_{\delta }
\big](x)ds-\int_{0}^{\Delta }\big[P_{\Delta -s,b}L_{h}p_{s,b}(\cdot ,y)
\big](x)ds
\\
&=\int_{0}^{\Delta }\int_{0}^{1} p_{\Delta -s,b}(x,z)h(z)\partial
_{z}\Big(\int_{0}^{1} p_{s,b}(z,u)\varphi_{\delta }(u)du-p_{s,b}(z,y)
\Big)dzds
\\
&=-\int_{0}^{\Delta /2}\int_{0}^{1} \partial_{z}[p_{\Delta -s,b}(x,z)h(z)]
\Big(\int_{0}^{1} p_{s,b}(z,u)\varphi_{\delta }(u)du-p_{s,b}(z,y)
\Big)dzds
\\
&\quad +\int_{\Delta /2}^{\Delta }\int_{0}^{1} p_{\Delta -s,b}(x,z)h(z)
\partial_{z}\Big(\int_{0}^{1} p_{s,b}(z,u)\varphi_{\delta }(u)du-p
_{s,b}(z,y)\Big)dzds
\\
&=:I+II.
\end{split}
\end{equation*}
Here we have integrated by parts and used that the boundary terms vanish
due to $h(0)=h(1)=0$. For the term $I$, by arguing as in
(\ref{H1bound})-(\ref{pdeltaconv}) (with $s$ and $z$ in place of
$\Delta $ and $x$), we have that
\begin{equation*}
\begin{split}
&\forall s\in (0,\Delta /2]:
~~~
\sup_{z\in [0,1]}\Big|\int_{0}^{1} p_{s,b}(z,u)\varphi_{\delta }(u)du-p
_{s,b}(z,y)\Big|\xrightarrow{\delta \to 0}0,
\end{split}
\end{equation*}
showing that the $ds$-integrand in $I$ tends to $0$ pointwise. By the
heat kernel estimate (\ref{hkest}) and (\ref{eq boundedness in H^s}),
we can also bound the $ds$-integrand uniformly in $\delta $ by
\begin{equation*}
\frac{2C}{\sqrt{s}}\lVert p_{\Delta -s,b}(x,\cdot )h\rVert_{H^{1}}
\leq \frac{2C\lVert h\rVert_{H^{1}}}{\sqrt{s}}
\sup_{x\in [0,1], s\in [\Delta /2,\Delta ]}\lVert p_{s,b}(x,\cdot )
\rVert_{H^{1}}<\infty ,
\end{equation*}
where $C$ is the constant from (\ref{hkest}). Hence, we have by the
dominated convergence theorem that $|I|\xrightarrow{\delta \to 0}0$.

For $II$, we argue similarly. By Lemma
\ref{lem bounds on transition densities}, we have that
\begin{equation*}
\sup_{s\in \left[ \Delta / 2,\Delta \right] ,z\in [0,1]} \lVert \partial
_{z}p_{s,b}(z,\cdot )\rVert_{H^{2}}<\infty ,
\end{equation*}
whence (\ref{Pt-H1}) yields that
\begin{equation*}
\begin{split}
&\Big| \partial_{z}\Big(\int_{0}^{1} p_{s,b}(z,u)\varphi_{\delta }(u)du-p
_{s,b}(z,y)\Big)\Big|
\\
&
\qquad
=\Big|\int_{0}^{1} \partial_{z}p_{s,b}(z,u)\varphi_{\delta }(u)du-
\partial_{z}p_{s,b}(z,y)\Big|
\\
&
\qquad
\leq \left\|  P_{\delta ,0}\big(\partial_{z}p_{s,b}(z,\cdot )\big)-
\partial_{z}p_{s,b}(z,\cdot ) \right\| _{\infty }\xrightarrow{\delta
\to 0}0.
\end{split}
\end{equation*}
Moreover, the $ds$-integrand is bounded by (cf. Lemma
\ref{lem bounds on transition densities})
\begin{equation*}
\begin{split}
\frac{2C\|h\|_{\infty }}{\sqrt{\Delta -s}}
\sup_{s\in [\Delta /2,\Delta ],z\in [0,1]} \|p_{s,b}(z,\cdot )\|_{H
^{1}},
\end{split}
\end{equation*}
such that by dominated convergence, we have $|II|\xrightarrow{
\delta \to 0}0$.

\section{Spectral analysis of $\mathcal{L}_{b}$ and $(P_{t,b}:t\geq 0)$}\label{secspectral}
In this section, we collect some properties of the generator
$\mathcal{L}_{b}$, the differential equation related to $\mathcal{L}
_{b}$ and the transition semigroup $(P_{t,b}:t\geq 0)$ which are needed
for the proofs of Section \ref{seclocalapprox}. Although some results
can be obtained using well-known, more general theory, our proofs are
based on more or less elementary arguments, using the spectral analysis
of $\mathcal{L}_{b}$ in Section \ref{subseceigvaleigfunc}.

\subsection{Bounds on eigenvalues and eigenfunctions of
$\mathcal{L}_{b}$}\label{subseceigvaleigfunc}
The following lemma summarizes some key properties of the eigenparis
$(u_{j},\lambda_{j})$ of $\mathcal{L}_{b}$. Note that the estimate
(\ref{Halphaest}) is an improvement on the bound in Lemma 6.6 of
\cite{ghr}, and that (\ref{Halphaest}) moreover coincides with the
intuition from the eigenvalue equation $\mathcal{L}_{b}u_{j}=\lambda
_{j} u_{j}$ that ``two derivatives of $u_{j}$ correspond to one order
of growth in $\lambda_{j}$''.
\begin{lem}
\label{lem bounds on eigenfunctions}
Let $s\ge 1$ be an integer and $B>0$.
\begin{enumerate}
\item
Suppose $b\in H^{s}\cap C^{1}_{0}$. Then for all $j\geq 0$, we have
$u_{j}\in H^{s+2}$.
\item
There exist $0<C'<C<\infty $ such that for all $b\in C^{1}_{0}$ with
$\|b\|_{\infty }\le B$,
\begin{equation}
\label{lam-j-est}
\forall j\geq 0,
~~~~
\lambda_{j} \in [-Cj^{2}, -C'j^{2}].
\end{equation}
Moreover, we have $u_{0}=1, \lambda_{0}=0$.
\item
There exists $C<\infty $ such that for all $0 \le \alpha \le s+2$,
\begin{equation}
\label{Halphaest}
\forall j\geq 1: \sup_{b\in H^{s}\cap C^{1}_{0}:\|b\|_{H^{s}}\leq B}
\|u_{j}\|_{H^{\alpha }}\leq C|\lambda_{j}|^{\frac{\alpha }{2}}.
\end{equation}
In particular, we have $\|u_{j}\|_{\infty } \lesssim |\lambda_{j}|^{1/4+
\epsilon }$ for all $\epsilon >0$.
\end{enumerate}
\end{lem}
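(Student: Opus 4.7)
The plan is to prove the three parts in order, using the eigenvalue equation $\mathcal{L}_b u_j=\lambda_j u_j$ (equivalently, $u_j''=\lambda_j u_j-bu_j'$), the self-adjointness of $\mathcal{L}_b$ on $L^2(\mu_b)$, the uniform pointwise bounds (\ref{eqinvmeasbdd}) on $\mu_b$, together with the multiplicative inequality (\ref{h-mult}) and the interpolation inequality (\ref{eqsobolevinter}).

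\emph{Part 1} is an elliptic bootstrap. Starting from $u_j\in\mathcal{D}\subset H^2$, we argue inductively: if $u_j\in H^m$ for some $2\le m\le s+1$, then $u_j'\in H^{m-1}$ and $b\in H^s$, so the multiplicative inequality (\ref{h-mult}), applicable with exponent $\min(m-1,s)\ge 1>1/2$, gives $bu_j'\in H^{\min(m-1,s)}$, and hence $u_j''\in H^{\min(m-1,s)}$ from the eigenvalue equation, i.e.\ $u_j\in H^{\min(m-1,s)+2}$. Finitely many iterations yield $u_j\in H^{s+2}$.

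\emph{Part 2} hinges on the Dirichlet form. Since $\mu_b'=b\mu_b$ by (\ref{eqinvariantmeasure}), the Neumann boundary conditions encoded in $\mathcal{D}$ allow integration by parts to give
\begin{equation*}
-\langle \mathcal{L}_b f,f\rangle_{L^2(\mu_b)}\;=\;\int_0^1 (f'(x))^2\mu_b(x)\,dx,\qquad f\in\mathcal{D}.
\end{equation*}
By (\ref{eqinvmeasbdd}), this quadratic form and the norm $\|\cdot\|_{L^2(\mu_b)}^2$ are equivalent, uniformly in $b\in\Theta$, to $\int(f')^2$ and $\|\cdot\|_{L^2}^2$. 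Since $\mathcal{L}_b$ has a discrete spectrum, the Courant--Fischer min-max principle identifies $|\lambda_j|$ with the corresponding Rayleigh quotient; the uniform equivalence above shows this is comparable, up to constants depending only on $B$, to the $j$-th eigenvalue of the Neumann Laplacian on $(0,1)$, namely $\pi^2 j^2$, which yields (\ref{lam-j-est}). The claim $u_0\equiv 1$, $\lambda_0=0$ is immediate from $\mathcal{L}_b 1=0$.

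\emph{Part 3.} We first prove the integer orders $\|u_j\|_{H^k}\lesssim|\lambda_j|^{k/2}$ for $k=0,1,\dots,s+2$, then interpolate. For $k=0$, the $L^2(\mu_b)$-normalization and $\mu_b\ge c>0$ give $\|u_j\|_{L^2}\lesssim 1$. For $k=1$, the Dirichlet identity above and Part 2 give $\|u_j'\|_{L^2}^2\lesssim\int(u_j')^2\mu_b=|\lambda_j|$. For $k=2$, the eigenvalue equation combined with $\|b\|_\infty\le B$ gives $\|u_j''\|_{L^2}\le|\lambda_j|\|u_j\|_{L^2}+B\|u_j'\|_{L^2}\lesssim|\lambda_j|$. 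For $3\le k\le s+2$, differentiate the equation $k-2$ times: $u_j^{(k)}=\lambda_j u_j^{(k-2)}-(bu_j')^{(k-2)}$; using (\ref{h-mult}) with exponent $k-2\ge 1$ and $\|b\|_{H^{k-2}}\le\|b\|_{H^s}\le B$, one gets $\|(bu_j')^{(k-2)}\|_{L^2}\lesssim\|u_j\|_{H^{k-1}}$, closing the induction step $\|u_j\|_{H^k}\lesssim|\lambda_j|\cdot|\lambda_j|^{(k-2)/2}+|\lambda_j|^{(k-1)/2}\lesssim|\lambda_j|^{k/2}$. For non-integer $\alpha\in[0,s+2]$, (\ref{eqsobolevinter}) applied between $L^2$ and $H^{s+2}$ gives $\|u_j\|_{H^\alpha}\lesssim\|u_j\|_{L^2}^{1-\alpha/(s+2)}\|u_j\|_{H^{s+2}}^{\alpha/(s+2)}\lesssim|\lambda_j|^{\alpha/2}$. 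The $L^\infty$ bound then follows from Sobolev embedding (\ref{eqsobolevemb}) with $\alpha=1/2+2\epsilon$.

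The main obstacle is uniformity of all constants over $b$ subject only to the stated norm bound. For Part 2 this forces the min-max argument based on the Dirichlet form together with (\ref{eqinvmeasbdd}), rather than any spectral perturbation argument. For Parts 1 and 3, uniformity requires that only $\|b\|_{H^s}\le B$ (and, for $k\le 2$, $\|b\|_\infty\le B$) enters the constants supplied by (\ref{h-mult}) at each induction step, which is transparent from the construction.
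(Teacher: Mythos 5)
Your Parts 1 and 2 follow essentially the same route as the paper: a one-step-at-a-time elliptic bootstrap via (\ref{h-mult}) and the equation $u_j''=\lambda_j u_j - bu_j'$ for regularity, and the Dirichlet-form identity $Q_b(f)=\langle f',f'\rangle_{L^2(\mu_b)}$ combined with (\ref{eqinvmeasbdd}) and the Courant--Fischer variational characterisation (cf.\ the paper's use of Theorem 4.5.3 of \cite{daviesspectral}) for the eigenvalue comparison. For Part 3, however, your argument is genuinely different from the paper's and, I think, cleaner. The paper iterates the generator, writing $\lambda_j^n u_j = \mathcal{L}_b^n u_j = u_j^{(2n)}+\sum_m P_{n,m}(b,\dots,b^{(2n-2)})u_j^{(m)}$ in (\ref{eqeven derivatives})--(\ref{eq odd derivatives}), and must then track the degree of the polynomials $P_{n,m}$ (at most $n$) and isolate the single term containing $b^{(2n-2)}$ (which must multiply $u_j'$, controlled in $L^\infty$) in order to close the uniformity over $\|b\|_{H^s}\le B$; the induction then proceeds in steps of two via the Sobolev embedding $C^{2n-2}\subseteq H^s$. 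You instead differentiate the scalar ODE $u_j''=\lambda_j u_j - bu_j'$ directly $k-2$ times and invoke the multiplication algebra property (\ref{h-mult}) on $bu_j'$ in $H^{k-2}$, which bundles the entire Leibniz expansion into the single estimate $\|bu_j'\|_{H^{k-2}}\lesssim \|b\|_{H^{k-2}}\|u_j'\|_{H^{k-2}}$; the only $b$-dependent constant is then $\|b\|_{H^{k-2}}\le B$ (degree one in $b$, since you never iterate $\mathcal{L}_b$), and the degree-of-polynomial bookkeeping disappears entirely. The price is that your use of $\|u_j'\|_{H^{k-2}}\lesssim\|u_j\|_{H^{k-1}}$ implicitly invokes the equivalence of the paper's two-derivative norm (\ref{eq-Hs-prod}) with the full-derivative norm (i.e.\ intermediate-derivative interpolation), which is worth stating explicitly, and a strong induction (over both $k-1$ and $k-2$) rather than the paper's two-step parity induction; the paper's base case at $\alpha=2$ routes through the graph-norm equivalence (\ref{norm-equiv}) whereas yours uses the eigenvalue equation directly with $\|b\|_\infty\le B$, which is slightly more self-contained. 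Both proofs are correct and give the same uniform constants; yours is shorter and avoids the combinatorial analysis of the $P_{n,m}$.
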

\begin{proof}
Using that $u_{j}\in \mathcal{D}\subseteq H^{2}$ and (\ref{h-mult}), we
obtain that for all $j\geq 0$, $u_{j}''=\lambda u_{j}-bu_{j}'\in H
^{1}.$ Differentiating this equation $s-1$ times and bootstrapping this
argument yields that $u_{j}^{(s+1)}\in H^{1}$.

Next, we prove (\ref{lam-j-est}) by adapting arguments from Chapter 4
of \cite{daviesspectral}. The standard Laplacian $\mathcal{L}
_{0}=\Delta $ with domain $\mathcal{D}$ is a nonpositive operator,
self-adjoint with respect to the $L^{2}$-inner product, with spectrum
$\left\{ -j^{2}\pi^{2}:j=0,1,2,...\right\} $ and associated quadratic form
\begin{equation*}
Q_{0}(f)=\langle f',f'\rangle_{L^{2}}\quad \textnormal{ for all }f
\in Dom((-\mathcal{L}_{0})^{1/2})=H^{1},
\end{equation*}
where the fact that $Dom((-\mathcal{L}_{0})^{1/2})=H^{1}$ is shown in
Chapter 7 of \cite{daviesspectral}. Similarly, using
(\ref{eqinvariantmeasure}) and integrating by parts using
$f'(0)=f'(1)=0$, we have that $\mathcal{L}_{b}$, with domain
$\mathcal{D}$, is self-adjoint with respect to the
$L^{2}(\mu_{b})$-inner product, and that for any $f\in \mathcal{D}$, the
associated quadratic form is given by
\begin{equation}
\begin{split}
\label{quad-form}
Q_{b}(f)
&=\langle -\mathcal{L}_{b} f,f\rangle_{L^{2}(\mu_{b})}=\int
_{0}^{1}f'^{2}\mu_{b}dx+\int_{0}^{1}f'f\mu_{b}'dx-\int_{0}^{1}f'fb\mu
_{b}dx
\\
&=\langle f',f'\rangle_{L^{2}(\mu_{b})}.
\end{split}
\end{equation}
For any finite-dimensional subspace $L\subseteq \mathcal{D}$, define
\begin{equation}
\label{eqdaviesfinitedimsubspace}
\begin{split}
\lambda^{(0)}(L):=\inf_{f\in L,\lVert f\rVert_{L^{2}}\leq 1}-Q_{0}(f),
\qquad
\lambda^{(b)}(L):=\inf_{f\in L,\lVert f\rVert_{L^{2}(\mu_{b})}\leq 1}-Q
_{b}(f).
\end{split}
\end{equation}
Then by Theorem 4.5.3 of \cite{daviesspectral}, the eigenvalues
of $\mathcal{L}_{0}$ and $\mathcal{L}_{b}$ are given by
\begin{equation}
\label{eqdaviesvariationalformula}
\begin{split}
\lambda_{j}^{(0)}=\sup_{L\subseteq \mathcal{D}, \dim L\leq j}
\lambda^{(0)}(L)=-j^{2}\pi^{2},
~~~~
\lambda_{j}^{(b)}=\sup_{L\subseteq \mathcal{D}, \dim L\leq j}\lambda
^{(b)}(L)
\end{split}
\end{equation}
respectively. This, combined with (\ref{quad-form}) and
(\ref{eqinvmeasbdd}), yields (\ref{lam-j-est}).

We now prove (\ref{Halphaest}). Iterating the equation $\mathcal{L}
_{b}u_{j}=\lambda_{j}u_{j}$, we have
\begin{equation*}
\begin{split}
\mathcal{\lambda }_{j}^{2}u_{j}=\mathcal{L}_{b}^{2}u_{j}
&=(u_{j}''+bu
_{j}')''+b(u_{j}''+bu_{j}')'
\\
&=u_{j}^{(4)}+b''u_{j}'+2b'u_{j}''+bu_{j}'''+bu_{j}'''+bb'u_{j}'+b
^{2}u_{j}''.
\end{split}
\end{equation*}
Note that in each summand above, except for the first one, the sum of
the orders of all derivatives is at most 3. This generalizes to
$n\geq 3$, in that there exist polynomials $P_{n,m}$ such that
\begin{equation}
\label{eqeven derivatives}
\lambda_{j}^{n}u_{j}=\mathcal{L}_{b}^{n}u_{j}=u_{j}^{(2n)}+\sum_{m=1}
^{2n-1}P_{n,m}(b,b',...,b^{(2n-2)})u_{j}^{(m)},
\end{equation}
for which one can check the following properties by induction:
\begin{enumerate}
\item
For all $n\geq 1$ and $m\leq 2n-1$, $P_{n,m}$ has degree at most $n$.
\item
The only summand in (\ref{eqeven derivatives}) with factor $b^{(2n-2)}$
is $u_{j}'b^{(2n-2)}$.
\end{enumerate}
For the odd order derivatives of $u_{j}$, there similarly exist
polynomials $\tilde{P}_{n,m}$ of degree at most $n$ such that
\begin{equation}
\label{eq odd derivatives}
\begin{split}
u_{j}^{(2n+1)}
&=\Big(\mathcal{L}_{b}^{n}u_{j}-\sum_{m=1}^{2n-1}P_{n,m}(b,b',...,b
^{(2n-2)})u_{j}^{(m)}\Big)'
\\
&= \lambda_{j}^{n} u_{j}'-\sum_{m=1}^{2n}\tilde{P}_{n,m}(b,b',...,b
^{(2n-1)})u_{j}^{(m)},
\end{split}
\end{equation}
where the only summand containing the factor $b^{(2n-1)}$ is
$u_{j}'b^{(2n-1)}$.

We now use these facts to show (\ref{Halphaest}) by an induction
argument, consisting of the base case and two induction steps.

\emph{Base Case $\alpha \leq 2$:} To show (\ref{Halphaest}) for all
$\alpha \leq 2$, it suffices to prove the case $\alpha =2$, as the case
$\alpha \in (0,2)$ then follows from $\|u_{j}\|_{L^{2}(\mu )}=1$ and
(\ref{eqsobolevinter}). We also note that the estimate for $\|u_{j}\|
_{\infty }$ then follows by the Sobolev embedding (\ref{eqsobolevemb}).
The case $\alpha =2 $ follows immediately from (\ref{norm-equiv}) and
(\ref{lam-j-est}):
\begin{equation*}
\|u_{j}\|^{2}_{H^{2}}\simeq \|\mathcal{L}_{b}u_{j}\|_{L^{2}(\mu )}
^{2}+ \|u_{j}\|_{L^{2}(\mu )}^{2}=(\lambda_{j}^{2}+1)\|u_{j}\|^{2}
_{L^{2}(\mu )}=\lambda_{j}^{2}+1\lesssim \lambda_{j}^{2},~~~ j\ge 1.
\end{equation*}

\textit{Induction step $2n\to 2n+1$:} Assume that for some integer
$n$, (\ref{Halphaest}) holds for all $\alpha \leq 2n<s+2$. Then, using
(\ref{eq odd derivatives}), the Sobolev embedding $C^{2n-2}\subseteq
H^{s}$ (note that $s\ge 2n-1$) and the induction hypothesis, we obtain
\begin{equation*}
\begin{split}
\|u_{j}^{(2n+1)}\|_{L^{2}}
&\lesssim |\lambda_{j}|^{n}\|u_{j}'\|_{L
^{2}}+\|b^{(2n-1)}\|_{L^{2}}\|u_{j}'\|_{\infty } +\|b\|^{n}_{C^{2n-2}}
\|u_{j}\|_{H^{2n}}\lesssim |\lambda_{j}|^{n+\frac{1}{2}}.
\end{split}
\end{equation*}
The non-integer case $\alpha \in (2n,2n+1)$ follows by interpolation.

\textit{Induction step $2n-1\to 2n$:} Similarly, using
(\ref{eqeven derivatives}), the embedding $C^{2n-3}\subseteq H^{s}$
(note that $s\ge 2n -2$) and the induction hypothesis, we have
\begin{equation*}
\begin{split}
\|u_{j}^{(2n)}\|_{L^{2}}\lesssim |\lambda_{j}|^{n}+\|b^{(2n-2)}\|_{L
^{2}}\|u_{j}'\|_{\infty }+\|b\|_{C^{2n-3}}^{n}\|u_{j}\|_{H^{2n-1}}
\lesssim |\lambda_{j}|^{n},
\end{split}
\end{equation*}
and the non-integer case $\alpha \in (2n-1,2n)$ again follows by
interpolation.
\end{proof}

\subsection{Characterisation of Sobolev norms in terms of
$(\lambda_{j},u_{j})$}

Using Lemma \ref{lem bounds on eigenfunctions}, we now prove that the
graph norms of the non-negative self-adjoint operators $\left( -
\mathcal{L}_{b}\right) ^{\theta }$, $\theta \in \{0,\frac{1}{2},1\}$, on
their respective domains, are equivalent to standard Sobolev norms. Let
$\ell^{2} = \ell^{2}( \mathbb{N}\cup \{0\})$ denote the usual space of
square-summable sequences. For any Banach space $(X,\|\cdot \|_{X})$ and
linear operator $T:D\to X$ with domain $D\subseteq X$, we denote the
graph norm of $T$ by
\begin{equation*}
\|x\|_{T}:= (\|x\|^{2}_{X}+\|Tx\|^{2}_{X})^{1/2}, ~~~~ x\in D.
\end{equation*}
\begin{lem}
\label{lemnormequiv}
1. Let $\theta \in [0,1]$. Then for any $f\in L^{2}$, we have
\begin{equation}
\label{dom-char}
f\in \text{Dom} \big((-\mathcal{L}_{b})^{\theta }\big)\iff \sum_{j=0}
^{\infty } (1+|\lambda_{j}|^{2\theta })|\langle f,u_{j}
\rangle_{L^{2}(\mu )}|^{2}\; <\infty
\end{equation}
and for any $f\in \text{Dom} \big((-\mathcal{L}_{b})^{\theta }\big)$,
we have
\begin{equation}
\label{spec-rep}
(-\mathcal{L}_{b})^{\theta }f=\sum_{j=1}^{\infty }(-\lambda_{j})^{
\theta }\langle f, u_{j}\rangle_{L^{2}(\mu )}u_{j}.
\end{equation}

2. There exists $0<C<\infty $ such that for any $\theta \in \{0,
\frac{1}{2},1\}$ , we have
\begin{equation}
\label{norm-equiv}
C^{-1}\|f\|_{H^{2\theta }}\leq \|f\|_{(-\mathcal{L}_{b})^{\theta }}
\leq C\|f\|_{H^{2\theta }},
~~
f\in \text{Dom} \big((-\mathcal{L}_{b})^{\theta /2}\big).
\end{equation}

3. There exists $0<C<\infty $ such that for all $f\in L^{2}$,
\begin{equation}
\label{neg-sob-equiv}
C^{-1}\|f\|_{H^{-1}} \leq \Big \| \big(\frac{\langle f,u_{j}
\rangle_{L^{2}(\mu )}}{\sqrt{1+|\lambda_{j}|}}:j\ge 0\big)\Big \|
_{\ell^{2}} \leq C\|f\|_{H^{-1}}
\end{equation}
\end{lem}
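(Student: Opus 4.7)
The plan is to treat each of the three parts in turn, relying on the standard functional calculus for the non-negative self-adjoint operator $-\mathcal{L}_b$ on $L^2(\mu_b)$ and on the explicit formula for $\mathcal{L}_b$ and its Dirichlet form already recorded in the paper. Throughout, the equivalence of $\|\cdot\|_{L^2}$ and $\|\cdot\|_{L^2(\mu_b)}$ guaranteed by (\ref{eqinvmeasbdd}) is what allows the spectral-theoretic arguments, which naturally live in $L^2(\mu_b)$, to be transferred to the usual Sobolev norms.

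Part 1 follows directly from the spectral theorem. By Lemma~\ref{lem bounds on eigenfunctions}, $-\mathcal{L}_b$ is non-negative and self-adjoint on $L^2(\mu_b)$ with purely discrete spectrum $\{|\lambda_j|\}_{j\ge 0}$ and $L^2(\mu_b)$-orthonormal eigenbasis $(u_j)_{j\ge 0}$. Fractional powers $(-\mathcal{L}_b)^\theta$ are then defined by the usual Borel functional calculus, and both (\ref{dom-char}) and (\ref{spec-rep}) are exactly the resulting characterizations of the domain and action.

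For Part 2, the case $\theta=0$ is immediate from (\ref{eqinvmeasbdd}). The case $\theta=1/2$ is handled by combining (\ref{spec-rep}) and Parseval with the identity $\|(-\mathcal{L}_b)^{1/2} f\|^2_{L^2(\mu_b)} = \langle -\mathcal{L}_b f, f\rangle_{L^2(\mu_b)} = Q_b(f) = \|f'\|^2_{L^2(\mu_b)}$ from (\ref{quad-form}); together with (\ref{eqinvmeasbdd}) this gives the equivalence with $\|f\|^2_{H^1}$. For $\theta=1$, the upper bound on the graph norm by $\|f\|_{H^2}$ is obtained directly by expanding $\mathcal{L}_b f = f'' + bf'$ and using $\|b\|_\infty \le B$. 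For the reverse, I would write $f'' = \mathcal{L}_b f - b f'$, bound $\|f''\|_{L^2} \le \|\mathcal{L}_b f\|_{L^2} + B\|f'\|_{L^2}$, and then control $\|f'\|_{L^2}$ by applying Cauchy-Schwarz to the identity $\|f'\|^2_{L^2(\mu_b)} = -\langle \mathcal{L}_b f, f\rangle_{L^2(\mu_b)}$ obtained from (\ref{quad-form}), yielding $\|f'\|_{L^2} \lesssim \|f\|_{\mathcal{L}_b}$.

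Part 3 follows by a duality argument. Since $\mu_b$ is bounded above and below, the $L^2$ and $L^2(\mu_b)$ pairings induce equivalent norms on $H^{-1}$, so $\|f\|_{H^{-1}} \simeq \sup_{\|g\|_{H^1}\le 1} |\langle f, g\rangle_{L^2(\mu_b)}|$. Expanding $f = \sum_j c_j u_j$ and $g = \sum_j d_j u_j$ with $c_j = \langle f, u_j\rangle_{L^2(\mu_b)}$ and similarly for $d_j$, Parseval gives $\langle f, g\rangle_{L^2(\mu_b)} = \sum_j c_j d_j$. Using the $\theta=1/2$ case of Part 2 to write $\|g\|_{H^1}^2 \simeq \sum_j (1+|\lambda_j|)|d_j|^2$ and then applying Cauchy-Schwarz with weights $\sqrt{1+|\lambda_j|}$ yields the upper bound for $\|f\|_{H^{-1}}$. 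The matching lower bound is obtained by testing against $g := \sum_j c_j (1+|\lambda_j|)^{-1} u_j$ suitably normalized in $H^1$, which (again by Part 2) nearly saturates Cauchy-Schwarz.

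The only step requiring any real work is the $\theta=1$ case in Part 2, whose lower bound relies on an interior elliptic estimate decoupling $f''$ from the first-order term $bf'$; but here this estimate is entirely elementary because $\mathcal{L}_b$ is explicit and $\|f'\|_{L^2}$ is controlled directly by the Dirichlet form. No serious obstacle arises beyond carefully bookkeeping the $L^2$ vs $L^2(\mu_b)$ inner products, which is handled uniformly by (\ref{eqinvmeasbdd}).
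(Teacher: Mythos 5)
Your proposal is correct and follows essentially the same line of argument as the paper: Part 1 via the spectral theorem / unitary equivalence to a multiplication operator, Part 2 ($\theta=1/2$) via the Dirichlet form identity $Q_b(f)=\|f'\|^2_{L^2(\mu_b)}$, and Part 3 via the $H^1$-$H^{-1}$ duality transferred to a weighted $\ell^2$ pairing by the eigenbasis. The only minor variation is in the $\theta=1$ lower bound of Part 2, where you control $\|f'\|_{L^2}$ directly from the Dirichlet form via Cauchy--Schwarz, whereas the paper integrates by parts to obtain $\|f'\|^2_{L^2}\le\|f\|_{L^2}\|f''\|_{L^2}$ and absorbs terms with Cauchy's inequality; both are valid and of comparable length.
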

\begin{proof}
1. We first prove (\ref{dom-char}) for $\theta =1$. Define the dense
linear subspace
\begin{equation*}
D:=\bigcup_{n=0}^{\infty }\operatorname{span}\left\{  u_{j} :j=0,...,n\right\}
\subseteq L^{2}(\mu_{b}).
\end{equation*}
Then by Lemma 1.2.2 in \cite{daviesspectral}, we know that the
restriction of $\mathcal{L}_{b}$ to $D$, which we shall denote by
$\mathcal{L}_{b}^{D}$, is an essentially self-adjoint operator on
$L^{2}(\mu_{b})$. Moreover, under the unitary operator
\begin{equation*}
U:L^{2}(\mu_{b})\to \ell^{2}, \quad f\mapsto \big(\langle f,u_{j}
\rangle_{L^{2}(\mu_{b})}:j\geq 0\big),
\end{equation*}
$\mathcal{L}_{b}^{D}$ is unitarily equivalent to the essentially
self-adjoint multiplication operator $M^{D}: (a_{j}:j\ge 0)\mapsto (
\lambda_{j}a_{j}:j\ge 0)$ on $\ell^{2}$ with domain
\begin{equation*}
U(D)=\{a\in \ell^{2}: a_{j}=0 \;\text{for all}\;j \;
\text{large enough}\}.
\end{equation*}
Thus, the unique self-adjoint extentions of both operators (cf.
\cite{daviesspectral}, Theorem 1.2.7), which we denote by $
\mathcal{L}_{b}$ and $M$, are also unitarily equivalent. Hence, for all
$f\in L^{2}(\mu_{b})$,
\begin{equation*}
f\in \mathcal{D}\iff \sum_{j=0}^{\infty }(1+\lambda_{j}^{2})|\langle
f,u_{j}\rangle_{L^{2}(\mu_{b})}|^{2}<
\infty
\end{equation*}
(The above condition defines the domain of the self-adjoint extension
of $M^{D}$, see \cite{daviesspectral}, Lemma 1.3.1), which proves
(\ref{dom-char}) for $\theta =1$. To see (\ref{dom-char}) for
$\theta \in [0,1)$, we note that the fractional power $(- \mathcal{L}
_{b})^{\theta }$ is unitarily equivalent to multiplication with
$\big(|\lambda_{j}|^{\theta }:j\geq 0\big)$, and that $f\in
\text{Dom}\left( (-\mathcal{L}_{b})^{\theta }\right) $ iff
\begin{equation*}
Uf \in \text{Dom}(M^{\theta })=\big \{f\in L^{2}: \sum_{j=0}^{\infty
} (1+|\lambda_{j}|^{2\theta })|\langle f,u_{j}\rangle_{L^{2}(\mu )}|^{2}
\; <\infty \big \}.
\end{equation*}

2. We now show (\ref{norm-equiv}). For $\theta =0$, there is nothing to
prove. For $\theta =1/2$, note that by Theorem 7.2.1 in
\cite{daviesspectral} and (\ref{quad-form}), we have $\text{Dom}\left( (-
\mathcal{L}_{b})^{1/2}\right) =H^{1}$ and
\begin{equation*}
\forall f\in H^{1}:\|f\|_{\mathcal{L}_{b}^{1/2}}^{2}=\|f\|^{2}_{L^{2}(
\mu_{b})}+\langle \mathcal{L}_{b}^{1/2}f, \mathcal{L}_{b}^{1/2} f
\rangle_{L^{2}(\mu_{b})}=\|f\|^{2}_{H^{1}(\mu_{b})}.
\end{equation*}
The case $\theta =1/2$ now follows from (\ref{eqinvmeasbdd}). Finally,
let $\theta =1$. It is clear that $\|f\|_{\mathcal{L}_{b}}^{2}\lesssim
\|f\|_{H^{2}}^{2}$, so that it remains to show $\|f\|_{H^{2}}^{2}
\lesssim \|f\|_{\mathcal{L}_{b}}^{2}$. For this, we use Cauchy's
inequality with $\epsilon $ to obtain that for some $c_{1}$,
\begin{equation*}
\begin{split}
\|\mathcal{L}_{b}f\|_{L^{2}}^{2}
&= \|f''\|_{L^{2}}^{2}+2\langle f'',bf'
\rangle_{L^{2}}+\|bf'\|_{L^{2}}^{2}\geq \frac{1}{2}\|f''\|_{L^{2}}
^{2}-c_{1}\|f'\|_{L^{2}}^{2}.
\end{split}
\end{equation*}
Hence, integrating by parts and using Cauchy's inequality with
$\epsilon $ again yields that for some $c_{2}$,
\begin{equation*}
\begin{split}
\|f''\|_{L^{2}}^{2}
&\leq 2\|\mathcal{L}_{b}f\|_{L^{2}}^{2}+2c_{1}\|f'
\|_{L^{2}}^{2}\leq 2\|\mathcal{L}_{b}f\|_{L^{2}}^{2}+2c_{1}\|f\|_{L
^{2}}\|f''\|_{L^{2}}
\\[4pt]
&\leq 2\|\mathcal{L}_{b}f\|_{L^{2}}^{2}+c_{2}\|f\|_{L^{2}}^{2}+
\frac{1}{2}\|f''\|_{L^{2}}^{2},
\end{split}
\end{equation*}
proving that $\|f\|_{H^{2}}^{2}\lesssim \|f\|_{\mathcal{L}_{b}}^{2}$.

3. For any $f\in L^{2}$ and any test function $\psi \in H^{1}$, let us
write $f_{j}=\langle f, u_{j}\rangle_{L^{2}(\mu_{b})}$ and $\psi_{j}=
\langle \psi , u_{j}\rangle_{L^{2}(\mu_{b})}$, $j\geq 0$ respectively.
Then by (\ref{spec-rep})-(\ref{norm-equiv}), we have
\begin{equation}
\label{neg-sob-est}
\everymath{\displaystyle}
\begin{array}{r@{\;}l}
\|f\|_{H^{-1}}
&\simeq \sup_{\psi \in H^{1}, \|\psi \|_{H^{1}}\leq 1}
\left| \langle f,\psi \rangle_{L^{2}(\mu_{b})}\right| =
\sup_{\psi \in H^{1}, \|\psi \|_{H^{1}}\leq 1}\Big|\sum_{j=0}^{\infty
}f_{j}\psi_{j}\Big|
\\\noalign{\vspace{6pt}}
&\simeq \sup_{\psi \in L^{2}, \|\psi \|_{L^{2}}\leq 1}\Big|\sum_{j=0}
^{\infty }f_{j}\left( 1+|\lambda_{j}|\right) ^{-1/2}\psi_{j}\Big|
\\\noalign{\vspace{6pt}}
&\simeq \big \|\big(f_{j}\left( 1+|\lambda_{j}|\right) ^{-1/2}:j\ge 0
\big)\big \|_{\ell^{2}}.
\end{array}
\end{equation}
\vspace{-28pt}

\mbox{}
\end{proof}

\subsection{Basic norm estimates for the one-dimensional Neumann problem}\label{subsecneumannreg}
From the preceding Lemma, we can immediately derive some basic
properties of the (elliptic) boundary value problem
\begin{equation}
\label{eqneumannpde}
\mathcal{L}_{b}u=f
~~
\text{on}~ (0,1),
~~~~
u'(0)=u'(1)=0
\end{equation}
needed in the proof of Lemma \ref{lemregest}. Let us denote the
orthogonal complement of the first eigenfunction $u_{0}\equiv 1$ of
$\mathcal{L}_{b}$ in $L^{2}(\mu_{b})$ by
\begin{equation*}
u_{0}^{\perp }=\Big \{f\in L^{2}: \int fd\mu =0\Big \}.
\end{equation*}
\begin{lem}
\label{lemneumannregularity}
For every $f\in u_{0}^{\perp }$, there exists a unique function
$u\in \mathcal{D}\cap u_{0}^{\perp }$ such that $\mathcal{L}_{b}u=f$,
for which we use the notation $u=\mathcal{L}_{b}^{-1}f$. Moreover, for
every $B>0$ there exists $C<\infty $ such that for all $b\in C^{1}
_{0}$ with $\|b\|_{\infty }\leq B$ and $f\in u_{0}^{\perp }$,
\begin{equation}
\label{ell-norm-est}
\|u\|_{H^{s}}\leq C\|f\|_{H^{s-2} }\quad \textnormal{ for } s\in \{0,1,2
\}.
\end{equation}
\end{lem}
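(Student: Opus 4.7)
My plan is to construct $u$ explicitly via the spectral decomposition of $\mathcal{L}_b$ and then to read off each estimate from the norm equivalences already established in Lemma \ref{lemnormequiv}, combined with the eigenvalue gap from Lemma \ref{lem bounds on eigenfunctions}(2). For existence and uniqueness, expand $f\in u_0^{\perp}$ as $f=\sum_{j\ge 1}f_ju_j$ with $f_j:=\langle f,u_j\rangle_{L^2(\mu_b)}$, and set
\begin{equation*}
u:=\sum_{j\ge 1}\lambda_j^{-1}f_j u_j.
\end{equation*}
Since $|\lambda_j|\ge C'>0$ for $j\ge 1$, we have $\sum_{j\ge 1}(1+\lambda_j^2)|f_j/\lambda_j|^2\le (1+(C')^{-2})\|f\|_{L^2(\mu_b)}^2<\infty$, so (\ref{dom-char}) with $\theta=1$ places $u\in\mathcal{D}\cap u_0^{\perp}$, and applying $\mathcal{L}_b$ term by term via (\ref{spec-rep}) yields $\mathcal{L}_bu=f$. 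Uniqueness is immediate: any $v\in\mathcal{D}\cap u_0^{\perp}$ with $\mathcal{L}_bv=0$ lies in $\ker\mathcal{L}_b=\operatorname{span}(u_0)$, hence vanishes.

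For the case $s=2$, the norm equivalence (\ref{norm-equiv}) with $\theta=1$, together with the eigenvalue gap and $\mathcal{L}_bu=f$, gives $\|u\|_{H^2}\lesssim\|u\|_{L^2(\mu_b)}+\|f\|_{L^2(\mu_b)}\lesssim\|f\|_{L^2}$, using (\ref{eqinvmeasbdd}) for the final passage to Lebesgue norms. For the case $s=1$, (\ref{norm-equiv}) with $\theta=1/2$ and the spectral decomposition yield
\begin{equation*}
\|u\|_{H^1}^2\simeq\sum_{j\ge 1}(1+|\lambda_j|)\frac{|f_j|^2}{\lambda_j^2}\;\lesssim\;\sum_{j\ge 1}\frac{|f_j|^2}{|\lambda_j|},
\end{equation*}
where I used $1+|\lambda_j|\le (1+1/C')|\lambda_j|$. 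Since the same lower bound on $|\lambda_j|$ also gives $1/|\lambda_j|\lesssim 1/(1+|\lambda_j|)$, the negative-Sobolev characterisation (\ref{neg-sob-equiv}) furnishes
\begin{equation*}
\|u\|_{H^1}^2\lesssim\sum_{j\ge 0}\frac{|f_j|^2}{1+|\lambda_j|}\simeq\|f\|_{H^{-1}}^2,
\end{equation*}
which completes the $s=1$ case.

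For the case $s=0$, I argue by duality against the $s=2$ estimate rather than expanding $\|f\|_{H^{-2}}$ directly in the eigenbasis. Apply the construction above to $u\in u_0^{\perp}$ itself, obtaining some $w\in\mathcal{D}\cap u_0^{\perp}$ with $\mathcal{L}_bw=u$ and $\|w\|_{H^2}\lesssim\|u\|_{L^2}$ by the already proved $s=2$ bound. Self-adjointness of $\mathcal{L}_b$ with respect to $\langle\cdot,\cdot\rangle_{L^2(\mu_b)}$ then gives
\begin{equation*}
\|u\|_{L^2(\mu_b)}^2=\langle u,\mathcal{L}_b w\rangle_{L^2(\mu_b)}=\langle \mathcal{L}_b u,w\rangle_{L^2(\mu_b)}=\int_0^1 fw\mu_b\,dx,
\end{equation*}
and the multiplicative inequality (\ref{h-mult}), combined with the uniform bound $\sup_{b\in\Theta}\|\mu_b\|_{H^2}<\infty$ (immediate from (\ref{eqinvariantmeasure})), yields
\begin{equation*}
\Big|\int_0^1 fw\mu_b\,dx\Big|\le\|f\|_{H^{-2}}\|w\mu_b\|_{H^2}\lesssim\|f\|_{H^{-2}}\|w\|_{H^2}\lesssim\|f\|_{H^{-2}}\|u\|_{L^2}.
\end{equation*}
Dividing through by $\|u\|_{L^2}$ completes the case. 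I expect this $s=0$ step to be the main obstacle, since a direct eigenbasis expansion of $\|f\|_{H^{-2}}$ is complicated by the fact that generic $H^2$ test functions need not satisfy the Neumann boundary conditions defining $\mathcal{D}$; the duality trick sidesteps this by only ever pairing $f$ against elements of $\mathcal{D}$.
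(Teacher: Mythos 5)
Your proposal is correct and follows essentially the same route as the paper: spectral construction of $u=\sum_{j\ge1}\lambda_j^{-1}f_ju_j$ for existence/uniqueness, the graph-norm equivalence (\ref{norm-equiv}) with $\theta=1$ for $s=2$, the $\theta=1/2$ case together with (\ref{neg-sob-equiv}) for $s=1$, and duality against the $s=2$ bound for $s=0$. The only cosmetic difference is in the $s=0$ argument: the paper writes
$\|u\|_{L^2(\mu_b)}=\sup_{\phi\in u_0^\perp,\|\phi\|_{L^2}\le1}\big|\int f\,\mathcal{L}_b^{-1}\phi\,d\mu_b\big|$
and estimates the right side directly, whereas you realise the sup by applying $\mathcal{L}_b^{-1}$ once more to produce a concrete test function $w$ with $\mathcal{L}_bw=u$; these are the same idea packaged differently. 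One small caveat, shared with the paper's own proof: to land in $\|f\|_{H^{-2}}$ (dual of $H^2$ with Lebesgue pairing) you must control $\|w\mu_b\|_{H^2}$, and you justify this via $\sup_{b\in\Theta}\|\mu_b\|_{H^2}<\infty$, which uses $\|b'\|_\infty\le B$; the lemma as stated, however, quantifies only over $\|b\|_\infty\le B$, under which $\mu_b''=(b'+b^2)\mu_b$ need not lie in $L^2$ uniformly. The paper's $s=0$ step implicitly relies on the same product bound, so this is a shared (and in the end harmless) imprecision, since only the $s=1$ case of (\ref{ell-norm-est}) is invoked elsewhere in the paper and that case needs only $\|b\|_\infty\le B$.
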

\begin{proof}
It follows immediately from the domain characterisation (\ref{dom-char})
and the spectral representation (\ref{Lb-spectral}) that $\mathcal{L}
_{b}$ is a one-to-one map from $\mathcal{D}\cap u_{0}^{\perp }$ to
$L^{2}\cap u_{0}^{\perp }$, and that $\mathcal{L}_{b}^{-1}$ is unitarily
equivalent to multiplication by $(\lambda_{j}^{-1}\mathbh{1}_{j
\geq 1}:j\ge 0)$ in the spectral domain, so that the $L^{2}\to L^{2}$
norm of $\mathcal{L}_{b}^{-1}$ is finite. Hence, for $s=2$, the estimate
(\ref{ell-norm-est}) follows from (\ref{norm-equiv}):
\begin{equation*}
\begin{split}
\left\| \mathcal{L}_{b}^{-1}f\right\| ^{2}_{H^{2}}\simeq \left\| \mathcal{L}
_{b}\mathcal{L}_{b}^{-1}f\right\| _{L_{2}}^{2}+\|\mathcal{L}_{b}^{-1}f
\|_{L^{2}}^{2}\simeq \|f\|_{L_{2}}^{2}.
\end{split}
\end{equation*}
The case $s=0$ is obtained by duality. Using that $\mathcal{L}_{b}
^{-1}$ is self-adjoint on $u_{0}^{\perp }$ and the previous case $s=2$,
we have that
\begin{equation*}
\begin{split}
\|\mathcal{L}_{b}^{-1}f\|_{L^{2}(\mu_{b})}
&=
\sup_{\phi \in u_{0}^{\perp }, \|\phi \|_{L^{2}}\leq 1}\Big|\int_{0}
^{1} \mathcal{L}_{b}^{-1}f\phi d\mu \Big|=
\sup_{\phi \in u_{0}^{\perp }, \|\phi \|_{L^{2}}\leq 1}\Big|\int_{0}
^{1} f\mathcal{L}_{b}^{-1}\phi d\mu \Big|
\\
&\lesssim \|f\|_{H^{-2}}.
\end{split}
\end{equation*}
Finally, for $s=1$, Lemma \ref{lemnormequiv} implies that
\begin{equation*}
\left\| \mathcal{L}_{b}^{-1}f\right\| _{H^{1}}^{2}
 \simeq \sum_{j=1}
^{\infty }(1+|\lambda_{j}|)\big|\frac{\langle f,u_{j}
\rangle_{L^{2}(\mu_{b})}}{\lambda_{j}}\big|^{2}\lesssim \sum_{j=1}
^{\infty }\frac{|\langle f,u_{j}\rangle_{L^{2}(\mu_{b})}|^{2}}{1+|
\lambda_{j}|}\lesssim \|f\|_{H^{-1}}^{2}.
             \qedhere
\end{equation*}
\end{proof}

\subsection{Estimates on $p_{t,b}(\cdot ,\cdot )$ and $P_{t,b}$}\label{subsecfixedtime}
Using Lemmata \ref{lem bounds on eigenfunctions} and
\ref{lemnormequiv}, we now collect some basic (partially well-known)
results about the Lebesgue transition densities $p_{t,b}(\cdot ,
\cdot )$ (Lemma \ref{lem bounds on transition densities}) and the
semigroup $P_{t,b}$ (Lemma \ref{lemPtsmoothing}). Recall that they were
defined in (\ref{eqtransitiondensity}) and (\ref{eqsemigroup}).
\begin{lem}
\label{lem bounds on transition densities}
Let $s\ge 1$ be an integer, $t_{0}>0$ and $B>0$. Then we have the
following.
\begin{enumerate}
\item
There exist constants $0<C<C'<\infty $ such that for all $t\ge t_{0}$,
$b\in C^{1}_{0}$ with $\|b\|_{C^{1}}\leq B$ and $x,y\in [0,1]$,
\begin{equation}
\label{eqtransdensitybounded}
C\leq p_{t,b}(x,y)\leq C'.
\end{equation}

\item
There exists $C<\infty $ such that for all $t\in (0,1]$ and
$b\in C^{1}_{0}$ with $\|b\|_{\infty }\leq B$,
\begin{equation}
\label{hkest}
\|p_{t,b}(x,y)\|_{\infty }\leq Ct^{-\frac{1}{2}},
~~
x,y\in [0,1].
\end{equation}

\item
For each $n\leq s+2,\; m\leq s$ and $n',m'\leq s+1,$
\begin{equation}
\label{eq boundedness in H^s}
\begin{split}
&\sup_{t\ge t_{0}}\sup_{y\in [0,1]}
\sup_{b\in C^{1}_{0}\cap H^{s}:\|b\|_{H^{s}}\leq B}\|\partial_{x}^{n}
\partial_{y}^{m}p_{t,b}(\cdot ,y)\|_{L^{2}}<\infty
\\
&\sup_{t\ge t_{0}}\sup_{x\in [0,1]}
\sup_{b\in C^{1}_{0}\cap H^{s}:\|b\|_{H^{s}}\leq B} \|\partial_{x}
^{n'}\partial_{y}^{m'}p_{t,b}(x,\cdot )\|_{L^{2}}<\infty .
\end{split}
\end{equation}
\end{enumerate}
\end{lem}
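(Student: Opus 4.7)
The plan is to work throughout with the spectral decomposition (\ref{eqspectral}), written with the $j=0$ term (where $u_0\equiv 1,\lambda_0=0$) extracted as
\begin{equation*}
p_{t,b}(x,y)=\mu_b(y)+\mu_b(y)\sum_{j\ge 1}e^{\lambda_j t}u_j(x)u_j(y),
\end{equation*}
controlling the eigenpairs $(\lambda_j,u_j)$ via Lemma \ref{lem bounds on eigenfunctions} and the factor $\mu_b(y)$ via (\ref{eqinvmeasbdd}). The uniform spectral gap $\lambda_j\le -c j^2$ provides exponential-in-$j$ decay that dominates the polynomial factors coming from eigenfunction bounds, at least for $t\ge t_0>0$.

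For part 1, the upper and lower bounds for $t\ge t_0$ are essentially Proposition 9 of \cite{ns}, already invoked in the setup. A self-contained proof proceeds by splitting the tail of the above series: the uniform spectral gap combined with $\|u_j\|_\infty\lesssim|\lambda_j|^{1/4+\varepsilon}$ makes this tail uniformly $o(1)$ as $t\to\infty$, and since $\mu_b$ is bounded above and below uniformly in $b\in\Theta$ by (\ref{eqinvmeasbdd}), one obtains the two-sided bounds for $t$ sufficiently large. On a compact range $[t_0,T_0]$, the bounds follow by combining strict positivity of $p_{t,b}$ at each fixed $t$ with joint continuity of $(t,x,y,b)\mapsto p_{t,b}(x,y)$, using the exponential convergence of the partial sums.

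For part 3, I would differentiate the series term-by-term and estimate the resulting sums, a posteriori justifying the interchange by the uniform convergence below. For the first estimate ($n\le s+2,m\le s$, $L^2$-norm in $x$),
\begin{equation*}
\bigl\|\partial_x^n\partial_y^m p_{t,b}(\cdot,y)\bigr\|_{L^2}\;\le\;\sum_{j\ge 0}e^{\lambda_j t}\|u_j^{(n)}\|_{L^2}\,\bigl|\partial_y^m[u_j(y)\mu_b(y)]\bigr|,
\end{equation*}
the first factor is $\lesssim|\lambda_j|^{n/2}$ by (\ref{Halphaest}), the constraint $n\le s+2$ matching the available $H^{s+2}$-regularity of $u_j$. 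The second factor is expanded via Leibniz's rule into a sum of $\|u_j^{(k)}\|_\infty\|\mu_b^{(m-k)}\|_\infty$ for $k\le m\le s$; here $\|u_j^{(k)}\|_\infty\lesssim|\lambda_j|^{k/2+1/4+\varepsilon/2}$ by (\ref{Halphaest}) combined with (\ref{eqsobolevemb}), while $\|\mu_b^{(m-k)}\|_\infty$ is uniformly bounded since $\mu_b$ is a smooth nonlinear transformation of $\int_0^\cdot b$ and $b\in H^s\cap C^1_0$ embeds into $C^{s-1}$. With $|\lambda_j|\simeq j^2$, the full sum is dominated by $\sum_j e^{-cj^2 t_0}j^{n+m+1/2+2\varepsilon}<\infty$, uniformly in $b,y$ and $t\ge t_0$. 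The second estimate ($n',m'\le s+1$, $L^2$-norm in $y$) is handled analogously; the asymmetric drop from $s+2$ to $s+1$ arises because one now needs $|u_j^{(n')}(x)|$ pointwise, which via the Sobolev embedding (\ref{eqsobolevemb}) costs one extra half-derivative of regularity compared to an $L^2$ bound.

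The main obstacle is part 2, the sharp short-time bound $\|p_{t,b}\|_\infty\lesssim t^{-1/2}$. A naive application of the spectral decomposition together with $\|u_j\|_\infty\lesssim|\lambda_j|^{1/4+\varepsilon}$ only yields an estimate of order $t^{-1-\varepsilon}$, which is too weak; moreover, Lemma \ref{lem bounds on eigenfunctions} does not provide a uniform-in-$j$ bound on $\|u_j\|_\infty$ that would close this gap. The correct tool is \emph{ultracontractivity}: by (\ref{quad-form}), $\mathcal{L}_b$ is self-adjoint on $L^2(\mu_b)$ with Dirichlet form $Q_b(f)=\|f'\|_{L^2(\mu_b)}^2$, which by (\ref{eqinvmeasbdd}) is uniformly equivalent to $\|f'\|_{L^2}^2$ across $b\in C^1_0$ with $\|b\|_\infty\le B$. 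Combining this with the one-dimensional Nash-type inequality $\|f\|_{L^2}^6\le C(\|f'\|_{L^2}^2+\|f\|_{L^2}^2)\|f\|_{L^1}^4$ on $[0,1]$ and the standard implication from Nash inequalities to $L^1\to L^\infty$ smoothing (e.g.\ Theorem 2.4.6 of Davies \cite{daviesspectral}) yields $\|P_{t,b}\|_{L^1(\mu_b)\to L^\infty}\lesssim t^{-1/2}$ uniformly over such $b$ and $t\in(0,1]$. Since $\|P_{t,b}\|_{L^1\to L^\infty}=\|p_{t,b}\|_\infty$ and $\mu_b$ is two-sided bounded, this delivers (\ref{hkest}).
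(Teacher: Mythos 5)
Your proposal is essentially correct, and the interesting divergence from the paper is confined to part 2. For part 1 the paper simply cites Proposition 9 of \cite{ns}, which you also acknowledge as the primary route; your supplementary self-contained sketch is fine in spirit but glosses over the fact that $\Theta$ is not compact in $C^{1}$, so ``joint continuity plus strict positivity on a compact parameter set'' does not instantly give uniformity over $b\in\Theta$ on $[t_{0},T_{0}]$ -- one would need to pass to the $C^{0}$-closure, check that the positivity and continuity claims still hold for the Lipschitz limits there, and establish some norm-continuity of $b\mapsto p_{t,b}$; since you defer to \cite{ns} anyway, this is not a real gap. For part 3 you take essentially the same route as the paper (differentiate the spectral series, control $\|u_j^{(n)}\|_{L^2}$ and $\|(u_j\mu_b)^{(m)}\|_\infty$ via Lemma \ref{lem bounds on eigenfunctions} and the spectral gap); the paper bounds the product term via the multiplicative inequality (\ref{h-mult}) rather than Leibniz, but the content and the role of the constraints $n\le s+2$, $m\le s$, $n',m'\le s+1$ are the same. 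For part 2 you genuinely deviate: the paper simply cites Theorem 2.12 of \cite{chorowski}, while you give a self-contained ultracontractivity argument from the one-dimensional Nash inequality and the uniform equivalence of the Dirichlet forms $Q_{b}$ furnished by (\ref{quad-form}) and (\ref{eqinvmeasbdd}). Your argument is correct (the Nash constant is geometric and $b$-independent; the form comparison gives uniformity; the Nash-to-$L^{1}\!\to\!L^{\infty}$ implication yields $t^{-1/2}$ in dimension one; and boundedness of $\mu_{b}$ converts the $\mu_{b}$-weighted kernel bound into (\ref{hkest})), and it buys a self-contained proof in place of a thesis citation, and moreover correctly identifies why the naive spectral bound with $\|u_j\|_\infty\lesssim |\lambda_j|^{1/4+\varepsilon}$ is too crude. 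One small slip: the Nash/ultracontractivity implication you invoke is from Davies's \emph{Heat Kernels and Spectral Theory}, not the \emph{Spectral Theory and Differential Operators} volume that \cite{daviesspectral} refers to, so the citation key should be adjusted.
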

\begin{proof}
For a proof of (\ref{eqtransdensitybounded}), we refer to Proposition
9 in \cite{ns} and for a proof of (\ref{hkest}), we refer to
Theorem 2.12 in \cite{chorowski}. Let us now prove the first part
of (\ref{eq boundedness in H^s}); the second is obtained analogously.
Let $n\leq s+2,\; m\leq s$. Then (\ref{eqinvariantmeasure}) yields that
\begin{equation*}
\sup_{\|b\|_{H^{s}}\leq B}\|\mu_{b}\|_{H^{s+1}}<\infty .
\end{equation*}
Using the multiplicative inequality (\ref{h-mult}), the spectral
decomposition (\ref{eqspectral}) and Lemma
\ref{lem bounds on eigenfunctions}, we have
\begin{equation*}
\begin{split}
&\lVert \partial_{x}^{n}\partial_{y}^{m}p_{t,b}(\cdot ,y)\rVert_{L
^{2}} \leq \sum_{j=0}^{\infty }e^{t\lambda_{j}}\lVert u_{j}^{(n)}
\rVert_{L^{2}}|\left(  u_{j}\mu_{b}\right) ^{(m)}(y)|
\\[-3pt]
&
~~~
\leq \sum_{j=0}^{\infty }e^{t_{0}\lambda_{j}}\lVert u_{j}^{(n)}
\rVert_{L^{2}}\lVert \left(  u_{j}\mu_{b}\right) ^{(m)}\rVert_{\infty }
\lesssim \sum_{j=0}^{\infty }e^{t_{0}\lambda_{j}}\lVert u_{j}
\rVert_{H^{s+2}}\lVert u_{j}\rVert_{H^{s+1}}\lVert \mu_{b}\rVert_{H
^{s+1}}
\\[-3pt]
&
~~~
\lesssim \sum_{j=0}^{\infty }e^{-cj^{2}} |\lambda_{j}|^{\frac{s+2}{2}+
\frac{s+1}{2}}\lesssim \sum_{j=0}^{\infty }e^{-cj^{2}} j^{2s+3} <
\infty ,
\end{split}
\end{equation*}
where Lemma \ref{lem bounds on eigenfunctions} implies that the
constants above are uniform in $\|b\|_{H^{s}}\leq B$.
\end{proof}

Finally, we collect some properties of $(P_{t,b}:t\geq 0)$.
\begin{lem}
\label{lemPtsmoothing}
Let $B>0$. The following holds.
\begin{enumerate}
\item
For all $b\in C^{1}_{0}$, $p\in [1,\infty ]$ and $f\in L^{p}$, we have
$\|P_{t,b}f\|_{L^{p}(\mu )}\leq \|f\|_{L^{p}(\mu )}$.
\item
For every $\epsilon >0$, there exists $C<\infty $ such that for all
$b\in C^{1}_{0}$ with $\|b\|_{\infty }\leq B$, $f\in H^{1}$ and
$t> 0$,
\begin{equation}
\label{Pt-H1}
\|P_{t,b}f-f\|_{L^{2}}\leq Ct^{1/2} \|f\|_{H^{1}}
~~
\text{and}
~~
\|P_{t,b}f-f\|_{\infty }\leq Ct^{1/4-\epsilon } \|f\|_{H^{1}}.
\end{equation}
In particular, we have that $H^{1}\subseteq \mathcal{D}(1/2)$, with
$D(1/2)$ defined by (\ref{eqDalpha}).
\item
Let $s\ge 1$ be an integer. Then for all $t>0$, $b\in H^{s}\cap C^{1}
_{0}$ with $\|b\|_{H^{s}}\leq B$ and $f\in L^{2}$, we have $P_{t,b}f
\in H^{s+2}$. Moreover, there exists $C<\infty $ such that for all such
$t,b,f$ and all $\alpha \le s+2$,
\begin{equation}
\label{Pt-H1H-1}
\|P_{t,b}f\|_{H^{\alpha }}\leq C(1+t^{-\frac{\alpha }{2}-\frac{3}{4}})
\|f\|_{H^{-1}}.
\end{equation}
\end{enumerate}
\end{lem}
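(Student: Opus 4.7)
The strategy is to prove each of the three parts using the spectral decomposition (\ref{eqspectral})--(\ref{Pt-spectral}) together with the eigenvalue/eigenfunction estimates of Lemma \ref{lem bounds on eigenfunctions} and the norm characterisations of Lemma \ref{lemnormequiv}. Part 1 is the standard Markov contraction: since $P_{t,b}$ is positivity preserving with $P_{t,b}1=1$ and invariant measure $\mu_{b}$, Jensen's inequality applied pointwise gives $|P_{t,b}f|^{p}\le P_{t,b}|f|^{p}$, and integrating against $\mu_{b}$ together with the invariance $\int P_{t,b}|f|^{p}\,d\mu_{b}=\int |f|^{p}\,d\mu_{b}$ yields the claim (with the endpoint $p=\infty$ being immediate from $P_{t,b}1=1$).

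For Part 2, write $f=\sum_{j\ge 0} f_{j}u_{j}$ with $f_{j}=\langle f,u_{j}\rangle_{L^{2}(\mu_{b})}$. The elementary inequality $|e^{-x}-1|^{2}\le x$ for $x\ge 0$ (check $x\le 1$ and $x\ge 1$ separately) combined with (\ref{Pt-spectral}) and (\ref{norm-equiv}) yields
\begin{equation*}
\|P_{t,b}f-f\|_{L^{2}(\mu_{b})}^{2}=\sum_{j}(e^{t\lambda_{j}}-1)^{2}f_{j}^{2}\le t\sum_{j}|\lambda_{j}|f_{j}^{2}\simeq t\,\|f\|_{(-\mathcal{L}_{b})^{1/2}}^{2}\lesssim t\|f\|_{H^{1}}^{2},
\end{equation*}
which both delivers the first half of (\ref{Pt-H1}) and proves $H^{1}\subseteq \mathcal{D}(1/2)$ from the very definition (\ref{eqDalpha}). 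The sup-norm estimate is then obtained by interpolation: the same spectral computation (with $e^{2t\lambda_{j}}\le 1$) shows $\|P_{t,b}f-f\|_{H^{1}}\lesssim \|f\|_{H^{1}}$, hence by (\ref{eqsobolevinter}),
\begin{equation*}
\|P_{t,b}f-f\|_{H^{1/2+2\epsilon}}\lesssim \|P_{t,b}f-f\|_{L^{2}}^{1/2-2\epsilon}\|P_{t,b}f-f\|_{H^{1}}^{1/2+2\epsilon}\lesssim t^{1/4-\epsilon}\|f\|_{H^{1}},
\end{equation*}
and the Sobolev embedding (\ref{eqsobolevemb}) gives the claimed $\|\cdot\|_{\infty}$-bound.

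Part 3 is the main work. Using (\ref{Pt-spectral}) and the triangle inequality in $H^{\alpha}$,
\begin{equation*}
\|P_{t,b}f\|_{H^{\alpha}}\le \sum_{j\ge 0}e^{t\lambda_{j}}|f_{j}|\|u_{j}\|_{H^{\alpha}},
\end{equation*}
and by Cauchy--Schwarz,
\begin{equation*}
\|P_{t,b}f\|_{H^{\alpha}}\le \Big(\sum_{j}\tfrac{|f_{j}|^{2}}{1+|\lambda_{j}|}\Big)^{1/2}\Big(\sum_{j}e^{2t\lambda_{j}}(1+|\lambda_{j}|)\|u_{j}\|_{H^{\alpha}}^{2}\Big)^{1/2}.
\end{equation*}
The first factor is $\lesssim \|f\|_{H^{-1}}$ by (\ref{neg-sob-equiv}). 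For the second, Lemma \ref{lem bounds on eigenfunctions} gives $\|u_{j}\|_{H^{\alpha}}\lesssim |\lambda_{j}|^{\alpha/2}$ and $|\lambda_{j}|\simeq j^{2}$, reducing the task to bounding $\sum_{j\ge 1}e^{-ctj^{2}}j^{2\alpha+2}$. A comparison with the Gaussian integral $\int_{0}^{\infty}e^{-ctx^{2}}x^{2\alpha+2}\,dx$, evaluated via the substitution $u=\sqrt{ct}\,x$, gives the sharp small-time estimate $\lesssim t^{-\alpha-3/2}$; for $t\ge 1$ the sum is uniformly bounded. Taking the square root yields the factor $1+t^{-\alpha/2-3/4}$, proving (\ref{Pt-H1H-1}); the membership $P_{t,b}f\in H^{s+2}$ then follows because the series converges absolutely in $H^{s+2}$ under the standing assumption $\|b\|_{H^{s}}\le B$.

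The only delicate point is the Gaussian sum/integral comparison in Part 3, and the fact that one must first pass through $H^{-1}$ (rather than $L^{2}$) in order to extract the extra $t^{-3/4}$ factor that is crucial for the application in (\ref{eqH1term1}). All constants remain uniform over $\|b\|_{H^{s}}\le B$ because the eigenvalue asymptotics (\ref{lam-j-est}) and the eigenfunction estimates (\ref{Halphaest}) hold with constants depending only on $B$ and $s$.
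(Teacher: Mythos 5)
Your proof is correct and follows essentially the same strategy as the paper's. Parts 2 and 3 are nearly verbatim the paper's argument: the $1/2$-H\"older bound $|e^{t\lambda}-1|^2\lesssim t|\lambda|$ followed by Lemma \ref{lemnormequiv} and interpolation (for Part 2), and the spectral expansion, Cauchy--Schwarz against the $H^{-1}$ weight from (\ref{neg-sob-equiv}), and the Gaussian sum--integral comparison yielding $t^{-\alpha-3/2}$ (for Part 3). The only divergence is Part 1: you use positivity of the kernel together with Jensen's inequality and $\mu_b$-invariance to get the $L^p(\mu_b)$ contraction for all $p$ at once, whereas the paper proves the endpoints $p=1$ (Fubini plus invariance) and $p=\infty$ ($P_{t,b}1=1$) and then invokes Riesz--Thorin interpolation. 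Both are standard Markov-semigroup arguments and give the same conclusion; your route is marginally more self-contained in that it avoids citing interpolation. One minor point worth being explicit about: the eigenfunction estimate $\|u_j\|_{H^\alpha}\lesssim|\lambda_j|^{\alpha/2}$ from Lemma \ref{lem bounds on eigenfunctions} only holds for $j\geq 1$ (for $j=0$ one has $u_0\equiv 1$, $\lambda_0=0$), so the $j=0$ term in your second Cauchy--Schwarz factor must be treated separately; it contributes the additive constant appearing in $1+t^{-\alpha/2-3/4}$, which you implicitly handle correctly by restricting the sum to $j\geq 1$.
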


\begin{proof}
1. For the case $p=1$, we have by Fubini's theorem that
\begin{align*}
\int_{0}^{1}\Big|\int_{0}^{1} p_{t,b}(x,z)f(z)dz\Big|d\mu (x)&\leq \int
_{0}^{1}\int_{0}^{1} p_{t,b}(x,z)d\mu (x)|f(z)|dz\\&=\int_{0}^{1} |f(z)|d
\mu (z).
\end{align*}
For the case $p=\infty $, we observe that for all $x\in [0,1]$
\begin{equation*}
|P_{t,b}f(x)|\leq \|f\|_{\infty }\int p_{t,b}(x,z)dz=\|f\|_{\infty }.
\end{equation*}
The case $p\in (1,\infty )$ follows by the Riesz-Thorin interpolation
theorem.

2. To prove the first part of (\ref{Pt-H1}), let $f\in H^{1}=
\text{Dom}\left( (-\mathcal{L}_{b})^{1/2}\right) $. By the
$1/2$-H\"{o}lder continuity of $x\mapsto e^{x}$ on $ (-\infty ,0]$ and
Lemma \ref{lemnormequiv}, we have that for all $t\geq 0$,
\begin{equation*}
\begin{split}
\|P_{t,b}f-f\|_{L^{2}(\mu_{b})}^{2}
&=\sum_{j=1}^{\infty }\left( e^{
\lambda_{j}t}-1\right) ^{2}|\langle f,u_{j}\rangle_{L^{2}(\mu_{b})}|^{2}
\\
&\lesssim t\sum_{j=1}^{\infty }|\lambda_{j}||\langle f,u_{j}
\rangle_{L^{2}(\mu_{b})}|^{2}\lesssim t\|f\|_{H^{1}}^{2}.
\end{split}
\end{equation*}
The second estimate in (\ref{Pt-H1}) now follows from the $H^{1}
\to H^{1}$ boundedness of $P_{t,b}$, the embedding (\ref{eqsobolevemb}),
the interpolation inequality (\ref{eqsobolevinter}) and the first part
of (\ref{Pt-H1}). Indeed, we have for any $\varepsilon >0$ that
\begin{equation*}
\|P_{t,b}f-f\|_{\infty }\lesssim \|P_{t,b}f-f\|^{(1-4\varepsilon )/2}
_{L^{2}}\|P_{t,b}f-f\|^{(1+4\varepsilon )/2}_{H^{1}}\lesssim t^{1/4-
\varepsilon }\|f\|_{H^{1}}.
\end{equation*}

3. By Lemma \ref{lem bounds on eigenfunctions}, we have that
$u_{j}\in H^{s+2}$ for all $j\geq 0$. Using the spectral representation
(\ref{Pt-spectral}), Lemma \ref{lem bounds on eigenfunctions}, Lemma
\ref{lemnormequiv} and Cauchy-Schwarz, we have
\begin{equation*}
\everymath{\displaystyle}
\begin{array}[b]{r@{\;}l}
\|P_{t,b}f\|_{H^{\alpha }}
&\lesssim \sum_{j=0}^{\infty }e^{\lambda
_{j}t}\|u_{j}\|_{H^{\alpha }}(1+|\lambda_{j}|)^{1/2}\frac{|\langle f,u
_{j}\rangle_{L^{2}(\mu_{b})}|}{(1+|\lambda_{j}|)^{1/2}}
\\\noalign{\vspace{4pt}}
&\lesssim \Big(\sum_{j=0}^{\infty }e^{2\lambda_{j}t}(1+|\lambda_{j}|)^{
\alpha +1}\Big)^{1/2}\|f\|_{H^{-1}}
\\\noalign{\vspace{4pt}}
&\lesssim \Big(1+\int_{0}^{\infty }e^{-2cx^{2}t}x^{2(\alpha +1)}dx
\Big)^{1/2}\|f\|_{H^{-1}}
\\                      \noalign{\vspace{4pt}}
&\lesssim \Big(1+t^{-\alpha -1-\frac{1}{2}}\Big)^{1/2}\|f\|_{H^{-1}}
\\                                            \noalign{\vspace{4pt}}
&\lesssim \big(1+t^{-\frac{\alpha }{2}-\frac{3}{4}}\big)\|f\|_{H^{-1}}.
\end{array}       \qedhere
\end{equation*}
\end{proof}



 \section*{Acknowledgments} 
I am very grateful to Richard Nickl for suggesting to pursue this
project, for repeatedly proofreading the manuscript and for sharing with
me some of the important ideas in this paper, including the one of
applying a PDE approach. I would like to thank the Cantab Capital
Institute for the Mathematics of Information, as well as Richard Nickl's
ERC grant No. 647812 (UQMSI), for supporting my PhD.



\end{document}